\newtheorem{thm}{Theorem}[section]
\newtheorem{cor}[thm]{Corollary}
\newtheorem{lem}[thm]{Lemma}
\newtheorem{prop}[thm]{Proposition}
\theoremstyle{definition}
\newtheorem{defn}[thm]{Definition}
\theoremstyle{remark}
\newtheorem{rem}[thm]{Remark}
\theoremstyle{conclusion}
\theoremstyle{question}
\numberwithin{equation}{section}
\newcommand{\lr}{\left(}
\newcommand{\rr}{\right)}
\newcommand{\R}{\mathbb{R}}
\newcommand{\mms}{\mathbb{S}}
\newcommand{\N}{\mathbb{N}}
\newcommand{\md}{\mathrm{d}}
\newcommand{\Sm}{\mathbb{S}}
\newcommand{\lb}{\left\{}
\newcommand{\rb}{\right\}}
\newcommand{\be}{\begin{equation}}
	\newcommand{\ee}{\end{equation}}
\newcommand{\al}{\alpha}
\newcommand{\la}{\lambda}
\newcommand{\var}{\varepsilon}
\begin{document}

\title[Non-radial solutions for the critical quasi-linear H\'{e}non equation]{Non-radial solutions for the critical quasi-linear H\'{e}non equation involving $p$-Laplacian in $\R^N$}

\author{Wei Dai, Lixiu Duan, Changfeng Gui, Yuan Li}

\address{School of Mathematical Sciences, Beihang University (BUAA), Beijing 100191, P. R. China, and Key Laboratory of Mathematics, Informatics and Behavioral Semantics, Ministry of Education, Beijing 100191, P. R. China}
\email{weidai@buaa.edu.cn}

\address{School of Mathematical Sciences, Beihang University (BUAA), Beijing 100191, P. R. China}
\email{lixiuduan@buaa.edu.cn}

\address{Department of Mathematics, University of Macau, Macau SAR, P. R. China}
\email{changfenggui@um.edu.mo}

\address{School of Mathematical Sciences, Key Laboratory of MEA (Ministry of Education) and Shanghai Key Laboratory of PMMP, East China Normal University, Shanghai, 200241, P. R. China}
\email{liyuan5397@163.com}

\thanks{Wei Dai is supported by the NNSF of China (No. 12222102), the National Science and Technology Major Project (2022ZD0116401) and the Fundamental Research Funds for the Central Universities. Lixiu Duan is supported by the National Science and Technology Major Project (2022ZD0116401), the Fundamental Research Funds for the Central Universities and the Academic Excellence Foundation of BUAA
for PhD Students. Changfeng Gui is supported by University of Macau research grants CPG2024-00016-FST, CPG2025-00032-FST, SRG2023-00011-FST, MYRGGRG2023-00139-FST-UMDF, UMDF Professorial Fellowship of Mathematics, Macao SAR FDCT 0003/2023/RIA1 and Macao SAR FDCT 0024/2023/RIB1. Yuan Li is supported by the NSFC (No. 12401132),   China Postdoctoral Science Foundation (No. 2022M721164) and Science and Technology Commission of Shanghai Municipality (No. 22DZ2229014).}

\maketitle

\begin{abstract}
In this paper, we investigate the following $D^{1,p}$-critical quasi-linear H\'enon equation involving $p$-Laplacian
\begin{equation*}\label{00}
\left\{
\begin{aligned}
&-\Delta_p u=|x|^{\alpha}u^{p_\al^*-1}, & x\in \R^N, \\
&u>0, & x\in \R^N,
\end{aligned}
\right.
\end{equation*}
where $N\geq2$, $1<p<N$, $p_\al^*:=\frac{p(N+\al)}{N-p}$ and $\alpha>0$. By carefully studying the linearized problem and applying the approximation method and bifurcation theory, we prove that, when the parameter $\al$ takes the critical values $\al(k):=\frac{p\sqrt{(N+p-2)^2+4(k-1)(p-1)(k+N-1)}-p(N+p-2)}{2(p-1)}$ for $k\geq2$, the above quasi-linear H\'enon equation admits non-radial solutions $u$ such that $u\sim |x|^{-\frac{N-p}{p-1}}$ and $|\nabla u|\sim |x|^{-\frac{N-1}{p-1}}$ at $\infty$. One should note that, $\alpha(k)=2(k-1)$ for $k\geq2$ when $p=2$. Our results successfully extend the classical work of F. Gladiali, M. Grossi, and S. L. N. Neves in \cite{GGN} concerning the Laplace operator (i.e., the case $p=2$) to the more general setting of the nonlinear $p$-Laplace operator ($1<p<N$). We overcome a series of crucial difficulties, including the nonlinear feature of the $p$-Laplacian $\Delta_p$, the absence of Kelvin type transforms and the lack of the Green integral representation formula.
\end{abstract}

\noindent\textbf{Keywords:} $p$-Laplace operator; Bifurcation theory; Quasi-linear H\'enon equation; Non-radial solutions.

\section{Introduction}
In this paper, we investigate positive weak solution $u \in D^{1,p}(\R^N)$ to the following $D^{1,p}$-critical quasi-linear H\'enon equation involving $p$-Laplacian
\begin{equation}\label{11}
\left\{
\begin{aligned}
&-\Delta_p u=|x|^{\alpha}u^{p_\al^*-1}, & x\in \R^N, \\
&u>0, & x\in \R^N,
\end{aligned}
\right.
\end{equation}
where $\Delta_p(\cdot):=\text{div}(|\nabla(\cdot)|^{p-2}\nabla(\cdot))$ denotes the $p$-Laplace operator, $N\geq2, \ 1<p<N, \ p_\al^*=\frac{p(N+\al)}{N-p}$ and $\alpha>0$. In particular, if $\alpha=0$, \eqref{11} becomes
\begin{align}\label{criticeqution}
	-\Delta_p u = u^{p^*-1},  \quad\, u>0 \quad\,\,\,\,&\mbox{in}\,\, \R^N,
\end{align}
where $u \in D^{1,p}(\R^N)$, $1<p<N$ and $p^*:=p^{*}_0=\frac{Np}{N-p}$ is the critical Sobolev embedding exponent. We say equation \eqref{11} is $D^{1,p}(\R^{N})$-critical in the sense that both \eqref{11} and the $D^{1,p}$-norm $\| \nabla u \|_{L^p(\R^N)}$ are invariant under the scaling $u\mapsto u_{\lambda}(\cdot):=\lambda^{\frac{N-p}{p}}u(\lambda\cdot)$.

\medskip

Serrin \cite{JSLB}, along with Serrin and Zou \cite{SJZH02}, among other contributions, established the regularity results for quasi-linear equations under some assumptions and derived the sharp lower bound in the asymptotic estimate of super-$p$-harmonic functions. From Theorem 1.3 in \cite{DLL}, we know that any positive $D^{1,p}(\mathbb{R}^{N})$-weak solution $u$ to \eqref{11} satisfies $u\in C^{1,\eta}(\mathbb{R}^{N})\cap L^{\infty}(\mathbb{R}^{N})$ for some $0<\eta<\min\{1,\frac{1}{p-1}\}$ and the following sharp asymptotic estimates (see also Theorems 1.2-1.4 in \cite{CDL}):
\begin{equation}\label{eq0806}
  \frac{c_0}{1+|x|^\frac{N-p}{p-1}} \leq u(x) \leq \frac{C_0}{1+|x|^\frac{N-p}{p-1}} \qquad \mbox{in}\,\,\,\R^N,
\end{equation}
\begin{align}\label{eq0806+}
\frac{c_0}{|x|^\frac{N-1}{p-1}} \leq |\nabla u(x)| \leq \frac{C_0}{|x|^\frac{N-1}{p-1}} \qquad \mbox{in}\,\,\,\R^N \setminus B_{R_0}(0)
\end{align}
for some constants $c_0, C_0, R_0 > 0$. Guedda and Veron \cite{GV} derived the uniqueness of the radially symmetric positive solution $U(x) = U(r)$ with $r = |x|$ to equation \eqref{criticeqution}, which satisfies $U(0)=b>0$ and $U'(0)=0$. Consequently, they reduced the classification of solutions to showing the radial symmetry of solutions. For literature on the radial symmetry of positive $D^{1,p}(\mathbb{R}^{N})$-weak solutions to the $D^{1,p}(\mathbb{R}^{N})$-critical quasi-linear equation \eqref{criticeqution}, refer to \cite{CFR,LD,LDSMLMSB,DP,DLPFRM,LDMR,LDBS04,Ou,BS16,VJ16} and the references therein.

\medskip

In \cite{LPLDHT}, the authors classified positive solutions to the \(D^{1,p}(\mathbb{R}^N)\)-critical quasi-linear Hardy equation of type \eqref{11} with $\alpha<0$ via the method of moving planes. In \cite{LM}, Lin and Ma obtained the classification of positive solutions for the weighted $p$-Laplace equation of the type \eqref{11} with a singular weight on partial variables, and hence derived the best constant and extremal functions for a class of Hardy-Sobolev-Maz'ya inequalities. For more literature on Liouville results, classification results and existence of (nonradial) solutions for ($D^{1,p}$-critical weighted) semi-linear or quasi-linear elliptic equations and $N$-Laplacian Liouville equations, please c.f. \cite{CDL,CDQ0,CPY,CMR,CDQ,CL,CL2,DGL,DLL,DPQ,DQ,DQ0,DFSV,DT,DSPV,E,GY00,GHM,GM,LPLDHT,LM,OSV,Ou,PS,PT,SJZH02,SSW,Z} and the references therein.

\medskip

In the special case $p=2$, the equation \eqref{11} reduces to the classical second order H\'{e}non equation with regular Laplace operator, i.e.,
\begin{equation}\label{12}
\left\{
\begin{aligned}
&-\Delta u=C(\al)|x|^\al u^{p_\al}, & x\in \R^N, \\
&u>0, & x\in \R^N,
\end{aligned}
\right.
\end{equation}
where $N\geq3$, $p_\al=\frac{N+2+2\al}{N-2}$ and $\alpha>0$. In \cite{GGN}, when $\alpha=\alpha(k)=2(k-1)$ with $k\geq2$ is an even integer, F. Gladiali, M. Grossi and S. L. N. Neves proved the existence of nonradial solutions to equation \eqref{12} using bifurcation theory. Specially, when $\al=2$ and $N\geq4$ is even, they obtained that, for any $a\in\R$, the functions
$$u(x)=u(|x'|,|x''|)=\frac{1}{\lr1+|x|^4-2a(|x'|^2-|x''|^2)+a^2\rr^{\frac{N-2}{4}}}$$
form a branch of nonradial solutions to \eqref{12} bifurcating from the radial solution  $U_2:=\frac{1}{(1+|x|^4)^{\frac{N-2}{4}}}$, where $x\in\R^{\frac{N}{2}}\times\R^{\frac{N}{2}}$, $x=(x',x'')$ with $x'\in\R^{\frac{N}{2}}$ and $x''\in\R^{\frac N2}$.

\medskip

For general $1<p<N$, problem \eqref{11} admits an explicit solution given by
\begin{align}\label{14}
U_{\la,\al}(x)=\frac{C_{N,p,\alpha}\la^{\frac{N-p}{p}}}
{(1+\la^{\frac{p+\al}{p-1}}|x|^{\frac{p+\alpha}{p-1}
		})^{\frac{N-p}{p+\al}}},\quad\,\, x\in\R^N,\end{align}
where \begin{align}\label{15}C_{N,p,\alpha}=\lr(N+\al)\lr\frac{N-p}{p-1}
\rr^{p-1}\rr^{\frac{N-p}{p(p+\alpha)}}.\end{align}
In particular, setting $\la=1$ yields
\begin{equation}\label{e1}
  U_\al:=U_{1,\al}=\frac{C_{N,p,\alpha}}
{(1+|x|^{\frac{p+\alpha}{p-1}
		})^{\frac{N-p}{p+\al}}}.
\end{equation}
For $\al>0$, we have the following result on uniqueness of radially symmetric solution to \eqref{11}.
\begin{thm}\label{thm11}
Suppose $N\geq 2$ and $\al>0$. Then problem \eqref{11} in $D^{1,p}(\R^N)$ has a unique (up to scalings)
radial nontrivial solution of the form $ U_{\lambda,\alpha}(x)$ for some $\lambda>0$, where $U_{\lambda,\alpha}(x)$ defined in \eqref{14}.
\end{thm}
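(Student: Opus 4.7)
My plan is to reduce the radial equation to an autonomous Hamiltonian system via an Emden--Fowler change of variables, and then use a conservation law to identify the orbit up to a one-parameter translation.

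First, since $u\in D^{1,p}(\R^N)$ is radial and positive, $u=u(r)$ solves the singular radial ODE
\[
(r^{N-1}|u'|^{p-2}u')' + r^{N-1+\al}\,u^{p_\al^{*}-1}=0,\qquad r>0,
\]
with $u'(0)=0$, $u(0)$ finite, and the sharp asymptotics $u(r)\sim r^{-(N-p)/(p-1)}$ and $|u'(r)|\sim r^{-(N-1)/(p-1)}$ as $r\to\infty$ given by \eqref{eq0806}--\eqref{eq0806+}.

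Second, I would apply the Emden--Fowler substitution $r=e^{\tau}$ and $V(\tau):=e^{a\tau}u(e^{\tau})$ with $a:=\frac{N-p}{p}$, chosen so that both $r^{N-1}|u'|^p\,dr$ and $r^{N-1+\al}u^{p_\al^{*}}\,dr$ transform into translation-invariant measures in $\tau$ (equivalently, so the associated Lagrangian becomes autonomous). A direct computation converts the radial equation into the autonomous form
\[
\frac{d}{d\tau}(|W|^{p-2}W)+a\,|W|^{p-2}W+V^{p_\al^{*}-1}=0,\qquad W:=V_\tau-aV,
\]
and multiplication by $V_\tau=W+aV$ yields the conserved Hamiltonian
\[
\mathcal{H}(V,V_\tau) := \tfrac{p-1}{p}|W|^p + a\,V\,|W|^{p-2}W + \tfrac{V^{p_\al^{*}}}{p_\al^{*}}.
\]

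Third, the pointwise asymptotics translate into $V(\tau)\to 0$ and $W(\tau)\to 0$ as $\tau\to\pm\infty$, which forces $\mathcal{H}\equiv 0$ along the orbit. Since $V>0$ and $V^{p_\al^{*}}/p_\al^{*}>0$, the identity $\mathcal{H}=0$ forces $W<0$ throughout. Being positive and decaying to $0$ at both ends, $V$ attains an interior maximum at some $\tau_0\in\R$; there $V_\tau=0$ yields $|W|=aV$, and substitution into $\mathcal{H}=0$ pins the peak value uniquely at $V_\ast=(p_\al^{*}a^{p}/p)^{1/(p_\al^{*}-p)}$. After translating so that $\tau_0=0$ (which amounts to fixing the scaling parameter $\la>0$), the polynomial $\mathcal{H}=0$ viewed as an equation in $|W|$ for each fixed $V\in(0,V_\ast)$ has exactly two positive roots $0<t_1(V)<aV<t_2(V)$, with $|W|=t_1(V)$ realized on the ascending branch $\{V_\tau>0\}$ and $|W|=t_2(V)$ on the descending branch $\{V_\tau<0\}$. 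A standard separation-of-variables argument on each branch yields a unique $V(\tau)$; since the explicit family $U_{\la,\al}$ from \eqref{14} solves \eqref{11} and hence traces this unique orbit, we conclude $u\equiv U_{\la,\al}$ for the corresponding $\la$.

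The main obstacle is the degeneracy of the $p$-Laplacian at the peak $W=0$, where the autonomous ODE fails the standard Cauchy--Lipschitz hypothesis; the global constraint $\mathcal{H}\equiv 0$ together with the uniquely determined peak value $V_\ast$ bypasses this by reducing matters to separation of variables on the two monotone branches. A secondary technical point is the translation of the sharp pointwise estimates \eqref{eq0806}--\eqref{eq0806+} into the decay of $V$ and $W$ at $\tau=\pm\infty$, but this is direct given their sharpness.
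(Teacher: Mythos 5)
Your proposal is correct, but it takes a genuinely different route from the paper. The paper's proof is an elementary shooting/comparison argument: normalize $u(0)=v(0)=1$ (always possible by the scaling invariance $u\mapsto\lambda^{\frac{N-p}{p}}u(\lambda\cdot)$), integrate the radial ODE once to obtain an explicit formula for $-u'(r)$ in terms of $\int_0^r t^{N+\alpha-1}u^{p_\alpha^*-1}\,dt$, and then show that if two normalized solutions first separate at some $r_0$ with, say, $u>v$ just past $r_0$, the integral identity forces $u'<v'$ there, which in turn forces $u<v$ near $r_0$ — a contradiction. This needs only that $u(0)$ is finite and $u'(0)=0$, i.e. the $C^{1,\eta}$ regularity at the origin; it does not use the asymptotics \eqref{eq0806}--\eqref{eq0806+} at infinity. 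Your Emden--Fowler/Hamiltonian approach is in the Guedda--Veron tradition for $\alpha=0$ and is sound: the conserved energy $\mathcal{H}$ vanishes along the orbit, the constraint $\mathcal{H}\equiv0$ forces $W<0$ (indeed $W\neq 0$, since $W=0$ and $\mathcal{H}=0$ would force $V=0$), pins the peak height $V_*$, excludes critical points of $V$ below $V_*$, and reduces uniqueness to separation of variables on the two monotone branches, with the $\tau$-translation freedom being exactly the scaling freedom. Two points you should make explicit to fully close the argument: (a) the constant orbit $V\equiv V_*$ is \emph{not} a solution of the ODE (the stationary equation gives $V_*^{p_\alpha^*-p}=a^p$, whereas $\mathcal{H}=0$ at the peak gives $V_*^{p_\alpha^*-p}=p_\alpha^* a^p/p\neq a^p$), so the trajectory cannot dwell at the peak despite the loss of Lipschitz continuity there — together with the uniqueness of the time-to-peak integral $\int^{V_*}\!dV/(aV-t_1(V))$, which you should note converges because $aV-t_1(V)\sim c\sqrt{V_*-V}$ near $V_*$, this bypasses the degeneracy; and (b) you actually do not need the sharp decay at infinity, since $\mathcal{H}\equiv0$ already follows from the $\tau\to-\infty$ limit alone, where $V=r^a u(r)\to0$ and $W=r^{a+1}u'(r)\to0$ by the $C^1$ regularity. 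Your method is longer but more structural, explicitly identifies the peak value $V_*$, and would generalize to other uniqueness questions; the paper's argument is shorter and more hands-on.
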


Being essentially different from the cases $\al\leq0$ for equation \eqref{11}, we can not prove the radial symmetry of positive solution to equation \eqref{11} with $\al>0$ via the method of moving planes or other methods. Nonradial solutions to \eqref{11} may exist. In this paper, by carefully studying the linearized problem and applying the approximation method and bifurcation theory, we aim to show the existence of nonradial solutions to \eqref{11} and extend the results in F. Gladiali, M. Grossi, and S. L. N. Neves in \cite{GGN} from $p=2$ to the more general cases $1<p<N$. We will overcome a series of crucial difficulties, including the nonlinear feature of the $p$-Laplacian $\Delta_p$, the absence of Kelvin type transforms and the lack of Green representation formula etc.

\medskip

To this end, we first study the linearized problem of equation \eqref{11} at the radial solution $U_\al$, i.e.,
\begin{equation}\label{16}
	\left\{
	\begin{aligned}
		&-\textrm{div}(|\nabla U_\al|^{p-2}\nabla v)-(p-2)
		\textrm{div}
		\lr|\nabla U_\al|^{p-4}(\nabla U_\al\cdot\nabla v)\nabla U_\al\rr=(p_\al^*-1)|x|^{\alpha}U_\al^{p_\al^*-2}v, \\
		&v\in D^{1,p}(\R^N).
	\end{aligned}
	\right.
\end{equation}
Specifically, when $\alpha = 0$, Pistoia and Vaira \cite{PV} proved the non-degeneracy of the solution $U_{0}:=U_{1,0}$ of equation \eqref{11}. In particular, all solutions $v\in\mathcal{D}^{1,2}_{0,*}(\mathbb{R}^N)$ of the equation
\begin{align}\label{Ppwhlp}
	& -{\rm div}(|\nabla U_{0}|^{p - 2}\nabla v)-(p - 2){\rm div}(|\nabla U_{0}|^{p - 4}(\nabla U_{0}\cdot\nabla v)\nabla U_{0})=\left(p^*-1\right)U_{0}^{p^*-2}v
\end{align}
in $\mathbb{R}^N$ are linear combinations of the functions
\begin{equation*}
	Z_0(x)=\frac{N - p}{p}U_{0}+x\cdot\nabla U_{0},\qquad Z_i(x)=\frac{\partial U_{0}(x)}{\partial x_i},\quad i = 1,\ldots,N.
\end{equation*}
Here, $\mathcal{D}^{1,2}_{0,*}(\mathbb{R}^N)$ is a weighted Sobolev space defined by the completion of $C^\infty_c(\mathbb{R}^N)$ with respect to the norm
\begin{align}\label{defd120*}
	\|v\|_{\mathcal{D}^{1,2}_{0,*}(\mathbb{R}^N)}:=\left(\int_{\mathbb{R}^N}|\nabla U_{0}|^{p - 2}|\nabla v|^2\mathrm{d}x\right)^{\frac{1}{2}}.
\end{align}
Pistoia and Vaira \cite{PV} derived the above conclusion by proving the continuous embedding $\mathcal{D}^{1,2}_{0,*}(\mathbb{R}^N)\hookrightarrow L^{2}_{0,*}(\mathbb{R}^N)$, where $L^{2}_{0,*}(\mathbb{R}^N)$ consists of measurable functions $v:\mathbb{R}^N\to\mathbb{R}$ with the norm
\[
\|v\|_{L^{2}_{0,*}(\mathbb{R}^N)}:=\left(\int_{\mathbb{R}^N}U_{0}^{p^*-2}v^2\mathrm{d}x\right)^{\frac{1}{2}}<+\infty.
\]
Subsequently, Figalli and Neumayer \cite{FN} established that $\mathcal{D}^{1,2}_{0,*}(\mathbb{R}^N)\hookrightarrow\hookrightarrow L^{2}_{0,*}(\mathbb{R}^N)$ compactly when $2\leq p<N$. Based on this, they showed that the solutions of \eqref{Ppwhlp} in $L^{2}_{0,*}(\mathbb{R}^N)$ are linear combinations of the functions $Z_0$ and $Z_i$ ($i = 1,\ldots,N$). For $1 < p<N$, Figalli and Zhang \cite{FZ} further proved that $\mathcal{D}^{1,2}_{0,*}(\mathbb{R}^N)\hookrightarrow\hookrightarrow L^{2}_{0,*}(\mathbb{R}^N)$ compactly and the non-degeneracy conclusion in \cite{FN} still holds.

\medskip

For $\al>0$, we prove that the linearized problem \eqref{16} is non-degenerate.
\begin{thm}\label{th11}
Let $\alpha\geq0$. If $\alpha>0$ and $\alpha\ne\al(k)$, then the space of solutions of \eqref{16} has dimension $1$ and is spanned by
\begin{align}\label{17}
Z(x)=\frac{(p-1)-|x|^{\frac{p+\al}{p-1}}}
{(1+|x|^{\frac{p+\al}{p-1}})^{\frac{N+\al}{p+\al}}}
.\end{align}
If $\alpha=\al(k)$ for some $k\in \mathbb{N}$, then the spaces of solutions of \eqref{16} has dimension $1+\frac{(N+2k-2)(N+k-3)!}{(N-2)!k!}$ and is spanned by
\begin{align}\label{18}
	Z(x)=\frac{(p-1)-|x|^{\frac{p+\al}{p-1}}}
	{(1+|x|^{\frac{p+\al}{p-1}})^{\frac{N+\al}{p+\al}}},\quad\quad
	Z_{k,i}(x)=\frac{|x|^{\frac{p+\alpha}{p(p-1)}}\Phi_{k,i}(x)}
	{(1+|x|^{\frac{p+\al}{p-1}})^{\frac{N+\al}{p+\al}}},
\end{align}
where $$\al(k):=\frac{p\sqrt{(N+p-2)^2+4(k-1)(p-1)(k+N-1)}-p(N+p-2)}{2(p-1)}$$
for $k\in \mathbb{N}$ and $\{\Phi_{k,i}\}$ ($i = 1,\cdots$, $\frac{(N+2k-2)(N+k-3)!}{(N-2)!k!}$) form a basis of the space $\boldsymbol{\Phi}_k(\R^N)$ consisting of all homogeneous harmonic polynomials of degree $k$ in $\R^N$.
\end{thm}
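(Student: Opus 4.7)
The plan is to carry out a spherical harmonic decomposition of the linearized problem \eqref{16}, reducing the PDE to a family of one-dimensional ODEs indexed by the Laplace--Beltrami eigenvalue on $\mathbb{S}^{N-1}$. Since $U_\alpha$ is radial, $\nabla U_\alpha = U'_\alpha(r)\hat{r}$ and hence $|\nabla U_\alpha|^{p-4}(\nabla U_\alpha\cdot\nabla v)\nabla U_\alpha = |U'_\alpha|^{p-2}(\partial_r v)\hat{r}$; combining this with the identity $\nabla v=\partial_r v\,\hat{r}+\tfrac{1}{r}\nabla_{\mathbb{S}^{N-1}}v$ collapses the linearized operator to $-\operatorname{div}\bigl(|U'_\alpha|^{p-2}\bigl[(p-1)(\partial_r v)\hat{r}+\tfrac{1}{r}\nabla_{\mathbb{S}^{N-1}}v\bigr]\bigr)$. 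Writing $v=\sum_{k\geq0}\sum_{i}\psi_{k,i}(r)Y_{k,i}(\omega)$ with $-\Delta_{\mathbb{S}^{N-1}}Y_{k,i}=k(k+N-2)Y_{k,i}$, the equation decouples mode by mode into the radial ODE
\begin{equation*}
-(p-1)\frac{1}{r^{N-1}}\bigl(r^{N-1}|U'_\alpha(r)|^{p-2}\psi'_{k}\bigr)'+\frac{k(k+N-2)}{r^{2}}|U'_\alpha(r)|^{p-2}\psi_{k}=(p^{*}_{\alpha}-1)\,r^{\alpha}U_\alpha^{p^{*}_{\alpha}-2}\psi_{k}.
\end{equation*}

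Next I would substitute the explicit expressions for $U_\alpha$ and $U'_\alpha$ from \eqref{e1}. With $\beta:=(p+\alpha)/(p-1)$, both the weight $r^{N-1}|U'_\alpha|^{p-2}$ and the right-hand-side potential $r^{\alpha}U_\alpha^{p^{*}_{\alpha}-2}$ factor into a monomial in $r$ times a power of $1+r^{\beta}$, so the change of variable $t=r^{\beta}/(1+r^{\beta})\in(0,1)$ reduces the ODE to a Gauss hypergeometric equation in $t$ with two regular singular points at $t=0$ and $t=1$. Computing the indicial exponents at these points and imposing that the local solutions can be glued into a global solution whose corresponding $v$ belongs to $D^{1,p}(\R^{N})$ (equivalently, to the weighted space adapted to the linearization, in the spirit of $\mathcal{D}^{1,2}_{0,*}$ used by Pistoia--Vaira \cite{PV}, Figalli--Neumayer \cite{FN} and Figalli--Zhang \cite{FZ}), I expect to obtain, for each $k\geq 1$, an algebraic condition equivalent to the quadratic
\begin{equation*}
(p-1)\alpha^{2}+p(N+p-2)\alpha-p^{2}(k-1)(k+N-1)=0,
\end{equation*}
whose unique nonnegative root is exactly $\alpha(k)$ from the statement; note that $\alpha(1)=0$, reflecting that translation symmetry is broken as soon as $\alpha>0$.

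For the radial mode $k=0$, one directly verifies (by differentiating the radial family of Theorem~\ref{thm11} with respect to $\lambda$) that $\frac{d}{d\lambda}U_{\lambda,\alpha}|_{\lambda=1}$ is a nonzero multiple of $Z(x)$ in \eqref{17}, so $Z$ always solves \eqref{16}; the second independent Frobenius solution at $r=0$ can be shown to be too singular to lie in $D^{1,p}$, hence no other radial solution contributes. For $k\geq 1$ and $\alpha=\alpha(k)$, the hypergeometric analysis singles out the unique admissible radial profile $\psi_k(r)=r^{(p+\alpha)/(p(p-1))}(1+r^{\beta})^{-(N+\alpha)/(p+\alpha)}$; pairing it with a basis $\{\Phi_{k,i}\}$ of the space $\boldsymbol{\Phi}_k(\R^N)$ of homogeneous harmonic polynomials of degree $k$ (which has dimension $\frac{(N+2k-2)(N+k-3)!}{(N-2)!k!}$) produces precisely the functions $Z_{k,i}$ in \eqref{18}. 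Summing the scale mode with the admissible $k$-th harmonic mode yields the claimed total dimension.

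The main obstacle I anticipate is not the hypergeometric computation itself but the rigorous justification of the spherical-harmonic expansion when $p\neq 2$. The weight $|U'_\alpha|^{p-2}$ degenerates at $r=0$ when $p>2$ and is singular there when $1<p<2$, so the natural energy space for the linearized operator is a weighted Sobolev space $\mathcal{D}^{1,2}_{\alpha,*}$ analogous to the one used by Figalli--Neumayer \cite{FN} and Figalli--Zhang \cite{FZ} for $\alpha=0$. Extending their continuous and compact embeddings $\mathcal{D}^{1,2}_{\alpha,*}\hookrightarrow L^{2}_{\alpha,*}$ to the present weighted setting, together with the accompanying Hardy-type inequality needed to control the centrifugal term $k(k+N-2)/r^{2}$ uniformly in $k$, is the most delicate step: without it the Fourier series need not converge in the correct sense, and one cannot rule out solutions of \eqref{16} that escape the ODE analysis.
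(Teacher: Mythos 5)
Your outline is correct and follows the same overall strategy as the paper: decompose the linearized PDE \eqref{16} into spherical harmonics, reduce to a one--dimensional radial ODE for each mode $k$ (the paper's \eqref{22} is exactly the ODE you write after dividing by the weight $|U'_\alpha|^{p-2}$), identify the $k=0$ mode as the scale derivative $Z=\partial_\lambda U_{\lambda,\alpha}|_{\lambda=1}$ and the $k\geq 1$ mode as $\varphi_k(r)\Phi_{k,i}$, and read off the quadratic $(p-1)\alpha^2+p(N+p-2)\alpha-p^2(k-1)(k+N-1)=0$ defining $\alpha(k)$.

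Where you differ from the paper is in the ODE--level technique, and this is the place where your proposal is vaguest precisely where the paper is sharpest. The paper substitutes $s=r^{p/(p+\alpha)}$ (rather than your M\"obius variable $t=r^\beta/(1+r^\beta)$), which collapses the problem to the $\alpha=0$ equation with $N$ replaced by $M=\frac{p(N+\alpha)}{p+\alpha}$. It then treats $\gamma=q^2\lambda_k$ as a Sturm--Liouville eigenvalue parameter in \eqref{qq210}, exhibits the two explicit eigenfunctions $\psi_1,\psi_2$ for $\gamma_1=M-1$ and $\gamma_2=0$, kills the second Frobenius solution in each case by its growth at infinity together with the weighted Hardy--Sobolev inequality of \cite{LDMR}, and then invokes \emph{Sturm--Liouville oscillation theory} to conclude that the eigenvalues decrease strictly, hence $\gamma_j<0$ for all $j\geq 3$. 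Since $q^2\lambda_k\geq 0$, only $\gamma\in\{0,M-1\}$ can occur, and this is exactly the quantization of $\alpha$. Your proposal replaces this step with ``compute the indicial exponents and impose that the local solutions glue into a global admissible one,'' but indicial exponents by themselves only give local Frobenius behavior: the global matching is the transcendental part, and showing that \emph{no} admissible solution exists for $\alpha\neq\alpha(k)$, $k\geq 1$, is the heart of the theorem. In the hypergeometric picture this amounts to a connection-coefficient or termination argument and is doable, but it is not a free consequence of reading off exponents; the Sturm--Liouville route avoids this entirely. Finally, the functional-analytic worry you raise about the spherical-harmonic expansion for $p\neq 2$ is legitimate, but the paper does not establish a compact embedding $\mathcal{D}^{1,2}_{\alpha,*}\hookrightarrow\hookrightarrow L^2_{\alpha,*}$ either: it works inside the explicit weighted space $\mathcal{B}$ from \eqref{23b} and uses the weighted Hardy--Sobolev inequality \eqref{dd214} to discard inadmissible Frobenius branches, which is enough at the ODE level. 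If you go the hypergeometric route you should adopt the same weighted-inequality mechanism to make the admissibility criterion at $r=0$ and $r=\infty$ precise.
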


\begin{rem}
In Theorem \ref{th11}, if $\al=0$, one has $k=1$, then our non-degeneracy result is consistent with the non-degeneracy result in Pistoia and Vaira \cite{PV}, Figalli and Neumayer \cite{FN}, and Figalli and Zhang \cite{FZ} for $\al=0$. For all $\al>0$, the problem \eqref{11} is invariant under the scaling $u\mapsto u_{\lambda}(\cdot):=\lambda^{\frac{N-p}{p}}u(\lambda\cdot)$, however, it does not possess translation invariance. Theorem \ref{th11} indicates that, if $\al=\al(k)>0$, then there exist new solutions to \eqref{16}, which supersede the solutions that would arise from translation invariance.
\end{rem}

As a corollary of Theorem \ref{th11}, we can compute the Morse index at the solution $U_\al$.
\begin{cor}\label{c12}
Let $U_\al$ be the radial solution of \eqref{11} given by \eqref{e1}, then its Morse index $m(\alpha)$ is equal to
$$m(\alpha)=\sum_{0\leq k<\zeta_k,\ k\in \N}\frac{(N+2k-2)(N+k-3)!}{(N-2)!k!},$$
where $\zeta_k=\frac{(2p-Np)+\sqrt{4(p-1)\al^2+4p(N+p-2)\al+N^2p^2}}{2p}$. In particular, we have that the Morse index of $U_\al$ varies as $\alpha$ cross $\al(k)$ ($\forall \, k\geq2$), and $m(\alpha)\to+\infty$ as $\alpha\to+\infty$.
\end{cor}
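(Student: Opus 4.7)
\textbf{Proof proposal for Corollary \ref{c12}.} The plan is to decompose the quadratic form associated with the linearized operator by spherical harmonics and count negative eigenvalues mode by mode. Recall that $m(\al)$ is the maximal dimension of a subspace on which the second-variation form
$$Q_\al(v):=\int_{\R^N}|\nabla U_\al|^{p-2}|\nabla v|^{2}\md x+(p-2)\int_{\R^N}|\nabla U_\al|^{p-4}(\nabla U_\al\cdot\nabla v)^{2}\md x-(p_\al^*-1)\int_{\R^N}|x|^{\al}U_\al^{p_\al^*-2}v^{2}\md x$$
is negative definite, acting on the weighted Hilbert space that naturally extends $\mathcal{D}^{1,2}_{0,*}(\R^N)$ to the H\'enon setting. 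Since $U_\al$ is radial, expanding
$$v(x)=\sum_{k=0}^{\infty}\sum_{i=1}^{M_k}\phi_{k,i}(|x|)\,Y_{k,i}(x/|x|),\qquad M_k:=\dim\boldsymbol{\Phi}_k(\R^N)=\frac{(N+2k-2)(N+k-3)!}{(N-2)!\,k!},$$
in an $L^{2}(\mms^{N-1})$-orthonormal basis of spherical harmonics decouples $Q_\al$ into a family of one-dimensional radial forms $Q_{\al,k}$, with mode $k$ differing from mode $0$ only by an angular potential proportional to $k(k+N-2)/r^{2}$.

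The core of the argument is to count negative eigenvalues of each radial operator $L_{\al,k}$. The computations underlying Theorem \ref{th11} already pinpoint the algebraic condition
$$(k-1)(k+N-1)=\frac{(p-1)\al^{2}+p(N+p-2)\al}{p^{2}},$$
which after an Emden--Fowler substitution $r=e^{t}$ is precisely the indicial equation characterising when $L_{\al,k}$ has a nontrivial kernel, its solution in $k$ being $\al=\al(k)$. I would then establish via a Sturm oscillation argument that, for each $k\geq1$, the zero eigenfunction exhibited in Theorem \ref{th11} (the radial factor of $Z_{k,i}$) is the ground state of $L_{\al(k),k}$. Consequently, the principal eigenvalue of $L_{\al,k}$ is simple, depends monotonically on $\al$, and crosses zero transversally at $\al=\al(k)$: strictly positive for $\al<\al(k)$ and strictly negative for $\al>\al(k)$, while all higher eigenvalues in this mode remain strictly positive. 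Mode $k=0$ requires a separate remark: the scaling invariance produces $Z$ as a zero eigenfunction of $L_{\al,0}$, but one further eigenvalue is strictly negative for every $\al\geq0$ (consistent with $\al(0)<0$), contributing $M_0=1$ to the Morse index.

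Granting this mode-by-mode count, each angular mode contributes $M_k$ to $m(\al)$ exactly when $\al(k)<\al$. Solving $\al(k)=\al$ for the real index $k$ by squaring and rearranging yields the explicit root
$$\zeta_k=\frac{(2p-Np)+\sqrt{4(p-1)\al^{2}+4p(N+p-2)\al+N^{2}p^{2}}}{2p},$$
so that $\al(k)<\al$ iff $k<\zeta_k$, which delivers the claimed formula for $m(\al)$. The strict monotonicity of $\zeta_k$ in $\al$ and $\zeta_k\to+\infty$ as $\al\to+\infty$ then give the jumps at each $\al(k)$ for $k\geq2$ and the divergence $m(\al)\to+\infty$. \textbf{Main obstacle:} the genuinely delicate step is the Sturm-type claim that the zero eigenfunction of Theorem \ref{th11} is the \emph{ground state} of $L_{\al(k),k}$ and that no further negative eigenvalue is created within a single angular mode as $\al$ varies; controlling this, together with the correct functional setting in the degenerate weight $|\nabla U_\al|^{p-2}$, will be the technical heart of the proof.
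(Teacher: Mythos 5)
Your proposal follows the same overall strategy as the paper: expand in spherical harmonics, reduce the Morse-index count to identifying which angular modes $k$ produce a radial direction below threshold, relate that threshold to $\alpha(k)$ (equivalently $k<\zeta_k$), and sum the multiplicities $M_k$. The interesting divergence is in the step you flag as the ``main obstacle.'' Instead of proving that the kernel element from Theorem~\ref{th11} is the ground state of $L_{\alpha(k),k}$ together with a transversality/monotonicity-in-$\alpha$ claim, the paper reuses the change of variable $s=r^{p/(p+\alpha)}$ from the proof of Theorem~\ref{th11}: the radial eigenvalue problem \eqref{210} becomes \eqref{212}, in which the angular number $q^2\lambda_k$ occupies the position of the spectral parameter $\gamma$ of \eqref{213}. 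Sturm--Liouville theory orders the $\gamma$-eigenvalues $\gamma_1(\mu)>\gamma_2(\mu)>\cdots$ by nodal count, each $\gamma_j(\mu)$ is increasing in $\mu$ by the min--max characterization, and \eqref{26} gives $\gamma_1(1)=M-1$, $\gamma_2(1)=0$; so a hypothetical $\mu_{i,k}<1$ with $i\geq 2$ would have an eigenfunction with at least one zero and would force $0\leq q^2\lambda_k\leq\gamma_2(\mu_{i,k})<\gamma_2(1)=0$, a contradiction, proving $\mu_{i,k}\geq 1$ for $i\geq 2$. The paper then avoids any transversality claim by exhibiting the first radial eigenfunction \eqref{q216} and eigenvalue \eqref{216} explicitly and checking that $\mu_{1,k}$ is increasing in $k$, which yields $\mu_{1,k}<1 \iff k<\zeta_k$ directly. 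Your plan is viable in principle, but the paper's route trades your qualitative ground-state and monotonicity-in-$\alpha$ claims for an explicit formula plus a short min--max/nodal-count comparison in the $\gamma$-variable, which is both cleaner in the degenerate weight $|\nabla U_\al|^{p-2}$ and sidesteps the need to justify uniqueness of the crossing.
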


Next, by applying Theorem \ref{th11}, we will show the existence of nonradial solutions to equation \eqref{11}. To this end, we introduce some essential notations and definitions.

\medskip

Define the weighted norm, for every $g\in L^\infty(\R^N)$,
\begin{align}\label{d18}
	\|g\|_\gamma:=\sup_{x\in\R^N}(1+|x|)^\gamma|g(x)|,
\end{align}
where $\gamma\in\left(\frac{N(N-p)}{Np-(N-p)},\frac{N-p}{p-1}\right)$. Denote
  $L^\infty_\gamma(\R^N):=\{g\in L^\infty(\R^N)\
\text{such\ that} \ \exists C>0\ \text{and}\ \|g\|_\gamma<C\} $.

\medskip

Set
\begin{align}\label{19}
	X=D^{1,p}(\R^N)\bigcap L^\infty_\gamma(\R^N),
\end{align}
$X$ is a Banach space with the norm
\begin{align}\label{110}
	\|g\|_X:=\max\{\|g\|_{1,p},\|g\|_\gamma\}
,\end{align}
where $\|\cdot\|_{1,p}$ denotes the usual norm in $D^{1,p}(\R^N)$, i.e., $\|g\|_{1,p}=\left(\int_{\R^N}|\nabla g|^p\md x\right)^{\frac{1}{p}}$ for $g\in D^{1,p}(\R^N)$.

\begin{defn}
Let $U_\al$ be the radial solution of \eqref{11} defined in \eqref{14}. Denote a nonradial bifurcation occurs at $(\bar\al,U_{\bar\al})$, if every neighborhood of $(\bar\al,U_{\bar\al})$ in $(0,+\infty)\times X$, there exists a point $(\al,v_\al)$ where the $v_\al$ is the nonradial solution of \eqref{11}.
\end{defn}

Let $\mathcal{O}(k)$ be the orthogonal group in $\R^k$. Our main result is the following theorem.
\begin{thm}\label{th14}
Let $\al=\al(k)$ with $k\in\N, k\geq2$. Then \\
(i) there exists at least a continuum of nonradial solutions to \eqref{11}, invariant with respect to $\mathcal{O}(N-1)$, bifurcating from the pair $(\al,U_\al)$; \\
(ii) if $k$ is even, there exist at least $[\frac{N}{2}]$ continua of nonradial solutions to \eqref{11} bifurcating from $(\al,U_\al)$. The $l$-th branch is invariant with respect to $\mathcal{O}(N-l)\times \mathcal{O}(l)$ for $l=1,\cdots,[\frac{N}{2}]$.\\
Furthermore, all these nonradial solutions $h$ derived in (i) and (ii) satisfy $h\sim |x|^{-\frac{N-p}{p-1}}$ and $|\nabla h|\sim |x|^{-\frac{N-1}{p-1}}$, as $|x|\rightarrow+\infty$.
\end{thm}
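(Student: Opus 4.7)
The plan is to recast \eqref{11} as an abstract bifurcation problem $F(\al,v)=0$ with $v=u-U_\al$ in the Banach space $X$, and then apply a Krasnoselskii/Rabinowitz global bifurcation theorem in carefully chosen symmetric subspaces, using Theorem \ref{th11} as the essential linear input. The first technical step is to face the degeneracy of $\De_p$ at points where the gradient vanishes: the map $v\mapsto\De_p(U_\al+v)$ is not of class $C^1$ on $X$ in the usual Fr\'echet sense. Following the approximation strategy announced in the abstract, I would replace $|\nabla u|^{p-2}$ by $(\var+|\nabla u|^2)^{(p-2)/2}$, obtaining an equation $F_\var(\al,v)=0$ that is smooth in $v$ and admits $U_\al$ as a smooth trivial branch after a harmless $L^\infty_\gamma$-perturbation of $U_\al$. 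Via elliptic inversion, $F_\var$ can be written as $v-T_\var(\al,v)$ with $T_\var$ compact on $X$ (the choice of $\gamma$ in the definition of $\|\cdot\|_\gamma$ being dictated by decay estimates for the linearized operator analogous to \eqref{eq0806}--\eqref{eq0806+}).

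Next I would analyze the kernel of the linearization $L_\al:=D_vF(\al,0)$. By Theorem \ref{th11}, at $\al=\al(k)$ the kernel in $X$ is spanned by the scaling direction $Z$ together with the $Z_{k,i}$ indexed by a basis of $\boldsymbol{\Phi}_k(\R^N)$. To produce a Krasnoselskii-type odd-dimensional crossing, I would restrict $F$ to an invariant subspace $X^G$ for a suitable closed subgroup $G\le\mathcal{O}(N)$ chosen so that $\dim\bigl(\boldsymbol{\Phi}_k(\R^N)^G\bigr)=1$; the nonlinearity of \eqref{11} is $G$-equivariant, so this restriction is legitimate. For part (i) I take $G=\mathcal{O}(N-1)$ acting on the last $N-1$ coordinates, leaving only the Gegenbauer zonal harmonic of degree $k$; the scaling direction $Z$ is factored out by imposing an orthogonality condition (or, equivalently, a Lyapunov--Schmidt reduction on the scaling direction). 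For part (ii), when $k$ is even, for each $l\in\{1,\ldots,[N/2]\}$ the polynomial obtained by decomposing $\R^N=\R^{N-l}\times\R^l$ and taking the unique (up to scalar) $\mathcal{O}(N-l)\times\mathcal{O}(l)$-invariant element of $\boldsymbol{\Phi}_k$ (an even-degree zonal harmonic depending on $|x'|^2-\tfrac{l}{N-l}|x''|^2$) supplies a 1-dimensional invariant kernel, and the different $l$'s yield genuinely distinct continua because the isotropy groups are pairwise non-conjugate.

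With the kernel thus reduced to an odd-dimensional one, the bifurcation mechanism requires that the relevant eigenvalue of $L_\al$ cross zero with odd multiplicity at $\al=\al(k)$. I would verify this by tracking the explicit eigenvalues coming from the separation-of-variables computation behind Theorem \ref{th11}: the eigenvalue associated with the degree-$k$ spherical harmonic sector is a smooth and strictly monotone function of $\al$ that vanishes precisely at $\al(k)$, hence has algebraic multiplicity one and odd crossing. The classical Krasnoselskii theorem (Rabinowitz global version for the continuum) then produces a connected branch of nontrivial zeros of $F_\var$ emanating from $(\al(k),0)$ inside $X^G$.

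The last step is to pass to the limit $\var\to0$ along the approximate branches and to establish the decay asymptotics. Uniform $D^{1,p}\cap L^\infty_\gamma$ bounds on the branches (inherited from the weighted norm and from the compactness of $T_\var$) together with standard regularity for degenerate quasilinear equations allow me to extract a subsequential limit that is a weak solution of \eqref{11} in $X^G$; nonradiality is preserved because the projection onto the nonradial component is bounded below along the branch near the bifurcation point. Finally, the pointwise asymptotics $h\sim|x|^{-(N-p)/(p-1)}$ and $|\nabla h|\sim|x|^{-(N-1)/(p-1)}$ follow from \eqref{eq0806}--\eqref{eq0806+} applied to the limit solution, once one checks that the solution is positive (true near the branch point by continuity from $U_\al>0$). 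The main obstacle I expect is the interplay between the $p$-Laplacian degeneracy and the bifurcation machinery: proving that the $\var\to0$ limit respects the odd-crossing/transversality condition and that the limiting branch remains a continuum of nonradial solutions (not collapsing back to radial ones) demands careful quantitative control, namely a uniform-in-$\var$ spectral gap for $L_\al^\var$ away from $\al(k)$ and a uniform nondegeneracy of the reduced Lyapunov--Schmidt bifurcation equation on the one-dimensional kernel.
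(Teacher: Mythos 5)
Your proposal takes a genuinely different approximation route from the paper, but the route you choose has gaps that, as stated, do not close.

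\textbf{The approximation is the wrong one.} You regularize the operator, replacing $|\nabla u|^{p-2}$ by $(\var+|\nabla u|^2)^{(p-2)/2}$, whereas the paper truncates the domain to $B_{1/\var}$ and keeps the $p$-Laplacian intact (Section 3). This choice is not a cosmetic one. The paper's approximate problem \eqref{31} still has the \emph{explicit} radial solution $u_{\var,\al}=U_\al-U_\al(1/\var)$ (see \eqref{32}), and the Dirichlet condition at $|x|=1/\var$ destroys the scaling invariance, which is what makes $u_{\var,\al}$ \emph{radially nondegenerate} (Lemma \ref{lem31}: $z(1/\var)\ne0$). Your operator regularization sacrifices both: there is no explicit radial branch for the $\var$-regularized equation, and you would first have to prove existence, uniqueness and radial nondegeneracy of a radial branch before any degree argument can start. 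Moreover, as $|x|\to\infty$ the coefficient $(\var+|\nabla u|^2)^{(p-2)/2}\to\var^{(p-2)/2}$, so the regularized operator degenerates into a rescaled Laplacian at infinity and the decay rates $|x|^{-(N-p)/(p-1)}$, $|x|^{-(N-1)/(p-1)}$ that underlie the weighted space $L^\infty_\gamma$ are no longer uniform in $\var$.

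\textbf{The scaling kernel $Z$ is not a removable nuisance.} You propose to ``factor out the scaling direction $Z$ by imposing an orthogonality condition or a Lyapunov--Schmidt reduction.'' But $Z$ is in the kernel of the whole-space linearized operator for \emph{every} $\al>0$ (Theorem \ref{th11}), so along the radial branch the linearization is never invertible in $X$, and the reduced bifurcation equation is itself degenerate in $\al$. This is exactly the obstruction the paper names explicitly just before Section 3 and is the reason it works in $B_{1/\var}$. Your Krasnoselskii argument in $X^G$ would see a kernel spanned by $Z$ (radial, hence $G$-invariant) \emph{plus} the zonal harmonic direction; after an $\al$-dependent reduction you still have to justify an odd crossing for the residual one-dimensional problem, which requires information equivalent to the paper's Lemma \ref{lem33}, Lemma \ref{lem34} and Proposition \ref{pp35} (sign and strict monotonicity of $\mu_1^{\var_n}(\al)$).

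\textbf{Nonradiality of the limit is a theorem, not an observation.} You assert that ``nonradiality is preserved because the projection onto the nonradial component is bounded below along the branch near the bifurcation point.'' In the paper this is Proposition \ref{pp44}, the crux of Section 4: the lower bound $\|u_n-v_n\|_\infty\geq C_0$ is established via a Pohozaev identity on uniformly bounded domains $\Omega_n\to B_r$, $r=(p-1)^{(p-1)/(p+\al)}$, and it relies on the nontrivial uniform decay \eqref{412} for $w_n=(u_n-v_n)/\|u_n-v_n\|_\infty$ obtained by the weighted Sobolev inequality of Proposition \ref{pp43} plus De Giorgi--Moser--Nash iteration. None of this is supplied by ``compactness of $T_\var$.'' Finally, to get a \emph{continuum} rather than a sequence in the limit, one needs a topological limit theorem for connected sets (Whyburn's theorem, Lemma \ref{th51}); a subsequential limit of solutions does not, by itself, produce a connected branch.

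In short: the separation-of-variables input (Theorem \ref{th11}) and the isotropy-group bookkeeping are fine and agree with the paper, but the choice of operator regularization removes the very structure (explicit radial branch, radial nondegeneracy from the boundary condition, uniform decay in $\var$) that makes the domain-truncation scheme go through, and the two hard quantitative steps -- odd crossing for the reduced problem and the lower bound ensuring the limit is nonradial -- are acknowledged but not proved.
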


\begin{rem}
Theorem \ref{th14} indicates that, when $\al>0$, the structure of solutions to equation \eqref{11} becomes significantly more complex compared to the case $\al=0$, particularly highlighting the emergence of nonradial solutions when $\al=\al(k)$. Notably, when $p=2$, $\al(k)=2(k-1)$($k\in\N$) are even integers, our results in Theorems \ref{th11} and \ref{th14}, and Corollary \ref{c12} are consistent with those results in Theorems 1.3, 1.6 and Corollary 1.4 of \cite{GGN}, respectively. Therefore, our results successfully extend the classical work of F. Gladiali, M. Grossi, and S. L. N. Neves in \cite{GGN} concerning the Laplace operator (i.e., the special case $p=2$) to the more general setting of the nonlinear $p$-Laplace operator ($1<p<N$).
\end{rem}

\begin{rem}
In the special case $p=2$, F. Gladiali, M. Grossi, and S. L. N. Neves proved in Theorem 1.6 in \cite{GGN} that, when $\al=\al(k)=2(k-1)$, all these nonradial solutions $h$ satisfy $\limsup\limits_{|x|\rightarrow+\infty}|x|^{N-2}h(x)<+\infty$. In our Theorem \ref{th14}, for general $1<p<N$, we can obtain the precise asymptotic estimate of all these nonradial solutions $h$ derived in (i) and (ii), i.e., $h\sim |x|^{-\frac{N-p}{p-1}}$ and $|\nabla h|\sim |x|^{-\frac{N-1}{p-1}}$, as $|x|\rightarrow+\infty$, which follows from Theorem 1.3 in \cite{DLL} (see \eqref{eq0806} and \eqref{eq0806+}).
\end{rem}

From Theorem \ref{th11}, we find that the linearized equation \eqref{16} has a radial solution $Z(x)$ for any $\al\in(0,+\infty)$, which is given by \eqref{17}. In particular, if $\alpha=\al(k)$ ($k\in \N$), the kernel of the linearized operator is generated by the radial solutions $Z(x)$ and $Z_{k,i}(x)$ given by \eqref{18}. This implies that, in the whole space $\mathbb{R}^{N}$, the radial solutions $U_\al$ are degenerate in the space of radial functions. Therefore, we cannot directly apply the classical bifurcation theory to obtain the existence of nonradial bifurcation points. Instead, we will investigate an approximate problem \eqref{31} in balls $B_{\frac{1}{\var}}(0)$ with radius $\frac{1}{\var}$. One should note that, the radial approximate solution $u_{\var,\alpha}$ to \eqref{31} is nondegenerate in the space of radial functions (see Lemma \ref{lem31}), and the operator $I - \mathcal{L}_{h}^n(\al,u_{n,\al}):(0,+\infty)\times\mathcal{Z}_n\to \mathcal{Z}_n$ is invertible for $\al \neq\al_k^n$, $k=1,2,\cdots$ (see Theorem \ref{th38}). Therefore, by applying the classical bifurcation theory in the balls $B_{\frac{1}{\var}}(0)$, we can obtain the existence of nonradial solutions that bifurcate from some radial approximate solutions close to $U_{\al}$. Finally, we will prove that these nonradial solutions will converge, in a certain sense, to the nonradial solutions of problem \eqref{11} in $\mathbb{R}^{N}$.

\medskip

We would like to mention some of the key difficulties in our proof, comparing with the special semi-linear case $p=2$ studied by F. Gladiali, M. Grossi and S. L. N. Neves \cite{GGN}. The main difficulties lie on the nonlinear virtue of the $p$-Laplacian $\Delta_p$, the unavailability of Kelvin type transforms and absence of the Green function representation formula, see e.g. Section 4.

\smallskip

In order to overcome the difficulties caused by the absence of the Green function representation formula, by comparing with the radial function $\Phi_n$ and maximum principle, we first proved the uniformly fast decay estimate for approximate solutions $v_n$ on large ball $B_{\frac{1}{\var_n}}$ in Proposition \ref{pp41} without using the Green function representation formula, and derived the uniformly fast decay estimate for $|\nabla v_n|$ via rescaling arguments in Proposition \ref{pro43}. Then, based on \eqref{aa438}, we can establish the uniformly fast decay estimate \eqref{412} for $w_n$ in Proposition \ref{ppq43} via a De Giorgi-Moser-Nash iteration argument, without using the Green function representation formula, where $w_n:=\frac{u_n-v_n}{\|u_n-v_n\|_\infty}$, $u_n$ and $v_n$ are approximate solutions on large ball $B_{\frac{1}{\var_n}}$. Moreover, in Proposition \ref{pp42}, by using the Pohozaev identity and maximum principle, and comparing with the radial function $\Psi_n$ and $|\nabla \Psi_n|$, we were also able to prove $\la=1$ without resorting the Green function representation formula. By using the Pohozaev identity on uniform bounded domains $\Omega_n$ such that $\Omega_n\rightrightarrows B_r(0)$ as $n\rightarrow+\infty$ with $r=(p-1)^{\frac{p-1}{p+\al}}$, we finally proved the crucial uniform lower bound $\|u_n-v_n\|_\infty\geq C_0$ in Proposition \ref{pp44} without using the Green function representation formula.

\smallskip

In order to overcome the difficulties caused by the unavailability of Kelvin type transforms, based on \eqref{aa438}, through a De Giorgi-Moser-Nash iteration argument, we can finally establish the uniformly fast decay estimate \eqref{412} for $w_n$ in Proposition \ref{ppq43} without using Kelvin type transforms.

\smallskip

In order to overcome the difficulties caused by the nonlinear virtue of the $p$-Laplacian $\Delta_p$, by proving a weighted Sobolev inequality \eqref{aa436} in Proposition \ref{pp43}, we overcame the nonlinear virtue of the $p$-Laplacian $\Delta_p$ and first proved a uniformly decay estimate in integral form \eqref{aa438} for $w_n$ in Proposition \ref{ppp43}, where $w_n:=\frac{u_n-v_n}{\|u_n-v_n\|_\infty}$. Finally, by considering the problem satisfied by $w_n$ directly, using the Pohozaev identity on uniform bounded domains $\Omega_n$ and the nontrivial fast decay estimate \eqref{412}, we successfully overcame the nonlinear virtue of the $p$-Laplacian $\Delta_p$ and proved the uniform lower bound $\|u_n-v_n\|_\infty\geq C_0$ in Proposition \ref{pp44}, which indicates that the limit of approximate solutions is non-radial solution and plays a quite crucial role in our proof of global bifurcation result in Section 5.

\medskip

The rest of our paper is organized as follows. Theorems \ref{thm11}, \ref{th11} and Corollary \ref{c12} are proved in Section 2. In Section 3, we investigate the approximate problem in the ball and show the bifurcation results of approximate solutions. In Section 4, we establish some crucial estimates for approximate solutions (see e.g. Proposition \ref{pp44}), which indicates that the limit of approximate solutions is non-radial solution. Finally, in Section 5, we prove our main result, i.e., Theorem \ref{th14}.

\vspace{0.3cm}

\noindent{\bfseries Notations.}
Throughout this paper, $B_R:=B_R(0)$ denotes the ball with radius $R$ centered at the origin. Moreover, $c$, $C$, $C'$ and $C_i$ are used to denote various absolutely positive constants whose values may differ from line to line. The notation $a\sim b$ means that $C'b\leq a\leq Cb$.

\bigskip

\section{Non-degeneracy of the linearized operator}
\begin{proof}[Proof of Theorem \ref{thm11}.]
Considering the radial solution $u$ to PDE \eqref{11}, we obtained the following ODE
	\begin{equation}\label{21q}
		\left\{
		\begin{aligned}
			&-(p-1)\lr r^{N-1}|u'(r)'|^{p-2}u'\rr'=(p_\al^*-1)r^{N+\alpha-1}u^{p_\al^*-1}   &\text{in}\ \lr0,+\infty\rr, \\
			&u'(0)=0,\quad u(0)=1, \quad u>0.
		\end{aligned}
		\right.
	\end{equation}
We will use contradiction argument. Assume that \(v\) is also a radial solution of \eqref{11} satisfying ODE \eqref{21q} and \(v\neq u\). Integrating both sides of the equation \eqref{21q} from $0$ to $r$, we can obtain
\begin{align}\label{22q}
	-u'(r)=\lr\frac{p_\al^*-1}{(p-1)r^{N-1}}\int_0^rt^{N+\al-1}u^{p_\al^*-1}\md t\rr^{\frac{1}{p-1}},
\end{align}
and
\begin{align}\label{23q}
	-v'(r)=\lr\frac{p_\al^*-1}{(p-1)r^{N-1}}\int_0^rt^{N+\al-1}v^{p_\al^*-1}\md t\rr^{\frac{1}{p-1}}.
\end{align}
Define
\begin{align}\label{24q}
r_0:=\sup\{r\geq0|u(s)=v(s),\ \ \forall s\leq r\}.
\end{align}
From $u(0)=v(0)$ and $u\ne v$, we know $0\leq r_0<+\infty$. Without loss of generality, we may assume that, there exists $\var>0$ small enough, such that \(u(r) > v(r)\) and $u'(r)>v'(r)$ for any $r\in(r_0,r_0+\var)$. By subtracting equation \eqref{22q} from equation \eqref{23q}, we can get, for any $r\in(r_0,r_0+\var)$,
\begin{align}\label{25q}
	&\quad 0>-(u'(r)-v'(r))=\\
&\lr\frac{1}{r^{N-1}}\int_0^{r}t^{N+\al-1}u^{p_\al^*-1}\md t\rr^{\frac{1}{p-1}}-\lr\frac{1}{r^{N-1}}\int_0^{r}t^{N+\al-1}v^{p_\al^*-1}\md t\rr^{\frac{1}{p-1}}>0,\nonumber
\end{align}
which is absurd.

Note that $U_{\alpha}(x)$ is a radial solution of the equation \eqref{11}. Therefore, up to scalings, the radial solution to equation \eqref{11} is unique and has the form of $U_{\la,\al}$.  This finishes the proof of Theorem \ref{thm11}.
\end{proof}

In the rest of this section, we investigate the non-degeneracy of the linearized problem \eqref{16} and carry out the proof of Theorem \ref{th11} and Corollary \ref{c12}. To this end, let us rewrite the linearized equation \eqref{16} as
\begin{align}\label{21pp}
	&\quad |x|^2 \Delta v+(p - 2) \sum_{i,j=1}^{N} \frac{\partial^2 v}{\partial x_i \partial x_j} x_i x_j +\frac{ (p - 2)(N +\al)}{p-1} \frac{1}
	{1+|x|^{\frac{p+\al}{p-1}}}(x \cdot\nabla v) \\
	&+\frac{(N+\al)(Np+p\al-N+p)}{p-1}\frac{|x|^{\frac{p+\al}{p-1}}}
	{(1+|x|^{\frac{p+\al}{p-1}})^2} v =0\nonumber.
\end{align}

\begin{proof}[Proof of Theorem \ref{th11}.]
We perform a standard spherical harmonic decomposition on $v$, i.e.,
$$v=v(r,\theta)=\sum_{k=0}^{\infty}\varphi_k(r)\Phi_k(\theta),$$
where $r=|x|, \theta=\frac{x}{|x|}\in \mms^{N-1}$ and
$$\varphi_k(r)=\int_{\mms^{N-1}}v(r,\theta)\Phi_k(\theta)\md\theta.$$
Here $\Phi_k(\theta)$ denotes the $k$-th spherical harmonic, i.e., it satisfies
\begin{align}\label{21}
	-\Delta_{\mms^{N-1}}\Phi_k=\lambda_k\Phi_k,
\end{align}
where $\Delta_{\mms^{N-1}}$ is the Laplace-Beltrami operator on $\mms^{N-1}$ with respect to the standard metric $g_{\mms^{N-1}}$ and $\lambda_k$ is the $k$-th eigenvalue of $\Delta_{\mms^{N-1}}$. It is well known that
$$\lambda_k=k(N-2+k),\quad \ k=0,1,2,\cdots,$$
whose multiplicity is
$$\frac{(N+2k-2)(N+k-3)!}{(N-2)!k!}$$
and
$$\textrm{Ker}(\Delta_{\mms^{N-1}}+\la_k)
=\boldsymbol{\Phi}_k(\R^N)|_{\mms^{N-1}},$$
where $\boldsymbol{\Phi}_k(\R^N)$ is the space of all homogeneous harmonic polynomials of degree $k$ in $\R^N$. The first eigenvalue $\la_0=0$ and the corresponding eigen-function of \eqref{21} is well known as constant function and the eigen-functions corresponding to the second eigenvalue $\la_1=N-1$ are $\frac{x_i}{x},\ i=1,\cdots,N$.

Through direct calculations, we know that
\begin{equation}\label{22p}
	\Delta(\varphi_k(r)\Phi_k(\theta)) = \Phi_k(\theta) \lr \varphi_k'' + \frac{N-1}{r}\varphi_k'\rr + \frac{1}{r^2}\varphi_k(r)\Delta_{\mms^{N-1}} \Phi_k(\theta).
\end{equation}
Since
\begin{align*}
	\frac{\partial(\varphi_k(r)\Phi_k(\theta))}{\partial {x_i}}= \varphi_k'(r)\frac{x_i}{r}\Phi_k(\theta) + \varphi_k(r)\sum_{h=1}^{N-1}\frac{\partial \Phi_k}{\partial\theta_h}\frac{\partial\theta_h}{\partial x_i}, \end{align*}
we have
	\begin{align}\label{23p}
		(\nabla(\varphi_k(r)\Phi_k(\theta))\cdot x) &= \sum_{i=1}^{N} x_i\frac{\partial(\varphi_k(r)\Phi_k(\theta))}{\partial x_i}\\
		& = \varphi_k'(r)r\Phi_k(\theta) + \varphi_k(r)\sum_{i=1}^{N}\sum_{h=1}^{N-1}\frac{\partial \Phi_k}{\partial\theta_h}\frac{\partial\theta_h}{\partial x_i}x_i \nonumber\\&= \varphi_k'(r)r\Phi_k(\theta).\nonumber
	\end{align}
From
\begin{align*}
	\frac{\partial^2(\varphi_k(r)\Phi_k(\theta))}{\partial x_i\partial x_j} &= \varphi_k''(r)\frac{x_ix_j}{r^2}\Phi_k(\theta) + \varphi_k'(r)\lr \frac{\delta_{ij}}{r} - \frac{x_ix_j}{r^3} \rr\Phi_k(\theta) + \varphi_k'(r)\frac{x_i}{r}\sum_{h=1}^{N-1}\frac{\partial \Phi_k}{\partial\theta_h}\frac{\partial\theta_h}{\partial x_j} \\
	&\quad + \varphi_k'(r)\frac{x_j}{r}\frac{\partial \Phi_k}{\partial\theta_h}\frac{\partial\theta_h}{\partial x_i} + \varphi_k(r)\frac{\partial^2 \Phi_k}{\partial\theta_h\partial\theta_{\ell}}\frac{\partial\theta_{\ell}}{\partial x_j}\frac{\partial\theta_h}{\partial x_i} + \varphi_k(r)\sum_{h=1}^{N-1}\frac{\partial \Phi_k}{\partial\theta_h}\frac{\partial^2\theta_h}{\partial x_i\partial x_j},
\end{align*}
it follows that
\begin{align}
	\label{24p}&\quad\sum_{i,j=1}^{N} \frac{\partial^2(\varphi_k(r)\Phi_k(\theta))}{\partial x_i\partial x_j} x_ix_j \\ &= \varphi_k''(r)r^2\Phi_k(\theta) + 2\varphi_k'(r)r\sum_{i=1}^{N}\sum_{h=1}^{N-1}\frac{\partial \Phi_k}{\partial\theta_h}\frac{\partial\theta_h}{\partial x_i}x_i \nonumber \\
	&\quad + \varphi_k(r)\sum_{i,j=1}^{N}\sum_{h=1}^{N-1}\frac{\partial^2 \Phi_k}{\partial\theta_h\partial\theta_{\ell}}\frac{\partial\theta_{\ell}}{\partial x_j}\frac{\partial\theta_h}{\partial x_i}x_ix_j + \varphi_k(r)\sum_{i,j=1}^{N}\sum_{h=1}^{N-1}\frac{\partial \Phi_k}{\partial\theta_h}\frac{\partial^2\theta_h}{\partial x_i\partial x_j}x_ix_j \nonumber\\
	&= \varphi_k''(r)r^2\Phi_k(\theta)\nonumber,
\end{align}
where we have used the fact that
\begin{align*}
	\sum_{i=1}^{N}\frac{\partial\theta_h}{\partial x_i}x_i = 0, \quad\quad
	\sum_{i,j=1}^{N}\frac{\partial^2\theta_h}{\partial x_i\partial x_j}x_ix_j = 0, \quad h = 1,\cdots,N-1.
\end{align*}
Putting \eqref{21}, \eqref{22p}, \eqref{23p} and \eqref{24p} into \eqref{21pp}, we deduce that the function $v$ is a solution of \eqref{21pp} if and only if $\varphi_k(r)$ is a
classical solution of the ODE
\begin{equation}\label{22}
	\left\{
	\begin{aligned}
		&\quad-(p-1)\varphi_k''(r)-\frac{\varphi_k'(r)}{r}\lr(N-1)+
		\frac{(p-2)(N+\al)}{1+r^{\frac{p+\al}{p-1}}}\rr+
		\la_k\frac{\varphi_k(r)}{r^2}\\
		&=\frac{(N+\al)(Np+p\al-N+p)}{p-1}\frac{r^{\frac{p+\al}{p-1}-2}}
{(1+r^{\frac{p+\al}{p-1}})^2}\varphi_k(r), \quad  r\in(0,\infty),\,\, \varphi_k\in\mathcal{B}, \\
		&\varphi_k'(0)=0\ \ \text{if}\ \ k=0\ \ \text{and}
		\ \ \varphi_k(0)=0\ \ \text{if} \ \ k\geq1,
	\end{aligned}
	\right.
\end{equation}
where the function space \begin{equation}\label{23b}\mathcal{B}:=\lb\varphi\in C([0,\infty))\Big|\int_0^\infty
|\varphi(r)|^2U_\al^{p_\al^*-2}r^{N+\al-1}\md r<+\infty\rb.\end{equation}
Because $v(r,\theta)=\sum_{k=0}^{\infty}\varphi_k(r)\Phi_k(\theta)$ is a smooth solution of \eqref{21pp}, we must have that
$\varphi_0'(0)=0$ and $\varphi_k(0)=0$ for $k\geq1$.

By changing of variable $s=r^{\frac{p}{p+\alpha}}$ and letting
\begin{equation}\label{e23}
	\eta_k(s)=\varphi_k(r^{\frac{p}{p+\alpha}}),\end{equation}
we rewrite \eqref{22} as
\begin{equation}\label{23}
	\left\{
	\begin{aligned}
		&\quad-\eta_k''(s)-\frac{\eta_k'(s)}{s}\lr\frac{M-1}{p-1}+
		\frac{(p-2)M}{(p-1)(1+s^{\frac{p}{p-1}})}\rr+\frac{q^2\la_k}{(p-1)}\frac{\eta_k(s)}{s^2}\\
		&=
		\frac{M(Mp-M+p)}{(p-1)^2}\frac{s^{\frac{p}{p-1}-2}}{(1+s^{\frac{p}{p-1}})^2}\eta_k(s), \quad  s\in(0,\infty),\,\, \eta_k\in\mathcal{\widetilde{B}},\\
		&\eta_k'(0)=0\ \ \text{if}\ \ k=0\ \ \text{and}
		\ \ \eta_k(0)=0\ \ \text{if} \ \ k\geq1,
	\end{aligned}
	\right.
\end{equation}
where
$$\mathcal{\widetilde{B}}:=\lb\eta\in C([0,\infty))\Big|\int_0^\infty
s^{M-1}|\eta(s)|^2|W|^{\frac{Mp}{M-p}-2}\md s<+\infty\rb,\quad W(s)=U(r),$$
\begin{align}\label{d14}
	U(r)={C_{M,p,\alpha}}
	{(1+r^{\frac{p}{p-1}
		})^{-\frac{M-p}{p}}},\quad x\in\R^M,\end{align}
and
$$M=\frac{p(N+\al)}{p+\al}>p\quad \text{and}\quad q=\frac{p}{p+\al}.$$
Fix $M$, let us consider the following eigenvalue problem
\begin{align}\label{qq210}
	&\quad-\psi''(s)-\frac{\psi'(s)}{s}\lr\frac{M-1}{p-1}+
	\frac{(p-2)M}{(p-1)(1+s^{\frac{p}{p-1}})}\rr+\frac{\gamma}{(p-1)}\frac{\psi(s)}{s^2}\\
	&=
	\frac{M(Mp-M+p)}{(p-1)^2}\frac{s^{\frac{p}{p-1}-2}}{(1+s^{\frac{p}{p-1}})^2}\psi(s).\nonumber
\end{align}
Equation \eqref{qq210} can be written in weak form as
	\begin{align}\label{d212}
	\mathcal{L}(\psi)=0,
	\end{align}
where the operator $\mathcal{L}$ is defined by
\begin{align*}
\mathcal{L}(\psi):=(s^{N-1}|U'(s)|^{p-2}\psi')'
+\frac{1}{p-1}\lr\frac{Mp}{M-p}-1\rr s^{N-1}(U(s))^{p^*-2}\psi-\frac{\gamma}{p-1}s^{N-3}|U'(s)|^{p-2}\psi.
\end{align*}
We will show existence of solutions $\psi$ to \eqref{qq210} (or equivalently, \eqref{d212}) in $\mathcal{D}$, where $\mathcal{D}$ is the completion of $C_c^1([0,+\infty))$ with respect to the norm
\begin{align*}
\|\psi\|:=\lr\int_0^{+\infty}s^{N-1}|U'(s)|^{p-2}|\psi'(s)|^2\md s
	+\frac{\gamma}{p-1}\int_0^{+\infty}s^{N-3}|U'(s)|^{p-2}|\psi(s)|^2\md s\rr^{\frac12}.
\end{align*}
The proof will be carried out by discussing the following two different cases.

\medskip

\textbf{Case 1.} $\gamma=\gamma_1:=M-1$. In such case, we have
\begin{align}\label{d214}
\psi_1(s):=\frac{s^{\frac{1}{p-1}}}
	{(1+s^{\frac{p}{p-1}})^{\frac{N+\alpha}{p+\alpha}}}
\end{align}
is a solution to equation \eqref{d212}. We claim that the solutions to the equation $\mathcal{L}(\psi)=0$ in  $\mathcal{D}$ can be characterized by  $\psi=c\psi_1$, where $c\in\mathbb{R}$. A straightforward computation verifies that $\psi_1\in\mathcal{D}$ and fulfills the equation $\mathcal{L}(\psi_1)=0$. Assume $h$ is a second solution to equation \eqref{d212} that is linearly independent with respect to $\psi_1$ and possesses the form
$$h(s)=c(s)\psi_1(s).$$
Then we obtain
$$c''(s)\psi_1(s)+c'(s)\lr2\psi_1'(s)+\frac{\psi_1(s)}{s}\lr\frac{N-1}{p-1}+\frac{M(p-2)}{p-1}\frac{1}{1+s^{\frac{p}{p-1}}}\rr\rr=0,$$
hence
$$\frac{c''(s)}{c'(s)}=-2\frac{\psi_1'(s)}{\psi_1(s)}-\frac{1}{s}\lr\frac{N-1}{p-1}+\frac{M(p-2)}{p-1}\frac{1}{1+s^{\frac{p}{p-1}}}\rr.$$
Direct computation implies that
$$
c'(s)=C\frac{(1+s^{\frac{p}{p-1}})^{\frac{(N+\alpha)(p-2)}{p+\alpha}}}
{(\psi_1(s))^2s^{\frac{N-1+M(p-2)}{p-1}}}\qquad \ \text{for\ some}\,\, C\in\R\setminus\{0\}.
$$
Thus
$$c(s)\sim As^{\frac{p(N-1)+\al(2p-N-1)}{(p-1)(p+\al)}+1},$$
and hence
$$h(s)=c(s)\psi_1(s)\sim As^{\frac{p(p-1)+\al(2p-N+1)}{(p-1)(p+\al)}}, \qquad \text{as}\,\, s\to+\infty\ \text{with}\ A\ne0.$$
By the weighted Hardy-Sobolev inequality (see, e.g., Lemma 2.3 in \cite{LDMR}), we get $h\notin\mathcal{D}$.

\medskip

\textbf{Case 2.} $\gamma=\gamma_2:=0$. In such case, we have
\begin{align}\label{d213}
\psi_2(s):=\frac{(p-1)-s^{\frac{p}{p-1}}}
	{(1+s^{\frac{p}{p-1}})^{\frac{N+\alpha}{p+\alpha}}}
\end{align}
is a solution to equation \eqref{d212}. We assert that all solutions to the equation $\mathcal{L}(\psi)=0$ in $\mathcal{D}$ can be expressed in the form $\psi=c\psi_2$, where $c\in\R$. A simple calculation reveals that $\psi_2\in\mathcal{D}$ and satisfies $\mathcal{L}(\psi_2)=0$. Now, assume $h$ is a second solution to equation \eqref{d212} that is linearly independent with respect to $\psi_2$ and has the form
$$h(s)=c(s)\psi_2(s).$$
Then we have
$$c''(s)\psi_2(s)+c'(s)\lr2\psi_2'(s)+\frac{\psi_2(s)}{s}\lr\frac{N-1}{p-1}+\frac{M(p-2)}{p-1}\frac{1}{1+s^{\frac{p}{p-1}}}\rr\rr=0,$$
and thus
$$\frac{c''(s)}{c'(s)}=-2\frac{\psi_2'(s)}{\psi_2(s)}-\frac{1}{s}\lr\frac{N-1}{p-1}+\frac{M(p-2)}{p-1}\frac{1}{1+s^{\frac{p}{p-1}}}\rr.$$
Through straightforward calculations, it can be shown that
$$
c'(s)=C\frac{(1+s^{\frac{p}{p-1}})^{\frac{(N+\alpha)(p-2)}{p+\alpha}}}
	{(\psi_2(s))^2s^{\frac{N-1+M(p-2)}{p-1}}}\qquad \ \text{for\ some}\,\, C\in\R\setminus\{0\}.
$$
Thus
$$c(s)\sim As^{\frac{(p-\al)(N-p)}{(p-1)(p+\al)}},$$
and hence
$$h(s)=c(s)\psi_2(s)\sim As^{-\frac{\al(N-p)}{(p-1)(p+\al)}}, \,\quad\, \text{as}\,\, s\to+\infty\ \text{with}\ A\ne0.$$
However, $h\notin\mathcal{D}$. In fact, from the weighted Hardy-Sobolev inequality (see, e.g., Lemma 2.3 in \cite{LDMR}), we get, if $q\geq1, s>q-N$ and $R\geq0$, then
\begin{equation}\label{dd214}
\int_{\mathbb{R}^N \setminus B_R(0)} |x|^{s-q}|\phi|^q \md x
\leq c(N, q, s) \int_{\mathbb{R}^N \setminus B_R(0)} |x|^s|\nabla \phi|^q \md x \,\,\quad\text{for\ any} \,\, \phi\in C_c^1(\R^N).
\end{equation}

If $p\in(1,2]$, applying \eqref{dd214} with $s=\max\left\{\frac{2\al(N-p)}{(p-1)(p+\al)}+2-N,\frac{(2-p)p(N-1)}{(p-1)(p+\al)}+\frac{(2-p)\al}{p+\al}\right\}$ and $q=2$, and combining with \eqref{d14}, we have
\begin{align}\label{dd215}
	&\quad\int_{\mathbb{R}^N \setminus B_R(0)} |x|^{s-2}|h|^2 \md x \\
	&\leq c \int_{\mathbb{R}^N \setminus B_R(0)} |x|^{s}|\nabla h|^2 \md x \nonumber\\
	&\leq c(R) \int_{\mathbb{R}^N} |\nabla U|^{p-2}|\nabla h|^2 \md x.\nonumber
\end{align}

If $p\in(2,N)$, applying \eqref{dd214} with $s=\min\left\{\frac{2\al(N-p)}{(p-1)(p+\al)}+2-N,-\frac{(p-2)p(N-1)}{(p-1)(p+\al)}-\frac{(p-2)\al}{p+\al}\right\}$ and $q=2$, and combining with \eqref{d14}, we can also obtain \eqref{dd215}. But $h(x)=h(|x|)$ does not satisfy inequality \eqref{dd215}, which is a contradiction.

\smallskip

Thus we obtain that \eqref{qq210} has solutions
\begin{align}\label{26}
	\gamma_1=M-1,\ \ \psi_1(s)=\frac{s^{\frac{1}{p-1}}}
	{(1+s^{\frac{p}{p-1}})^\frac{M}{p}} \ \ \text{and}\ \
\gamma_2=0,\ \ \psi_2(s)=\frac{(p-1)-s^{\frac{p}{p-1}}}
{(1+s^{\frac{p}{p-1}})^\frac{M}{p}}.
\end{align}	
Moreover, from Sturm-Liouville theorem, \eqref{qq210} has a sequence of simple eigenvalues $\gamma_1>\gamma_2>\cdots$. By \eqref{26}, we get $\gamma_j\leq0,\ j\geq3$. Since $q^2\la_k\geq0$, we conclude that \eqref{23} has a nontrivial solution if and only if
$$q^2\la_k\in\{0,M-1\}.$$
If $q^2\la_k=0$, then $k=0$. If $q^2\la_k=M-1$, that is,
\begin{align}\label{27}
	(p-1)\alpha^2+p(N+p-2)\alpha +p^2(1-k)(k+N-1)=0,\ \ \text{for\ some}\ k\in\N.
\end{align}	
The equation \eqref{27} has a solution for $\alpha\geq0$, which is
\begin{align}\label{k224}\alpha=\al(k):=\frac{p\sqrt{(N+p-2)^2+4(k-1)(p-1)(k+N-1)}-p(N+p-2)}{2(p-1)}.\end{align}	
Turning back to \eqref{22}, we get the solutions
\begin{align}\label{28}
	& \varphi_0(r)=\frac{(p-1)-r^{\frac{p+\alpha}{p-1}}}
	{(1+r^{\frac{p+\alpha}{p-1}})^{\frac{N+\alpha}{p+\alpha}}},\quad \text{if}\  \alpha\ne\al(k),\ k\in\N,\nonumber\\
	&\varphi_0(r)=\frac{(p-1)-r^{\frac{p+\alpha}{p-1}}}
	{(1+r^{\frac{p+\alpha}{p-1}})^{\frac{N+\alpha}{p+\alpha}}},\quad 	\varphi_k(r)=\frac{r^{\frac{p+\alpha}{p(p-1)}}}
	{(1+r^{\frac{p+\alpha}{p-1}})^{\frac{N+\alpha}{p+\alpha}}}\quad \text{if}\  \alpha=\al(k),\ k\in\N.
\end{align}
This completes our proof of Theorem \ref{th11}.
\end{proof}

\begin{proof}[Proof of Corollary \ref{c12}.]
The equation \eqref{11} corresponds to the following eigenvalue problem
\begin{equation}\label{29}
	\left\{
	\begin{aligned}
		&-\textrm{div}(|\nabla U_\al|^{p-2}\nabla v)-(p-2)
		\textrm{div}
		\lr|\nabla U_\al|^{p-4}(\nabla U_\al\cdot\nabla v)\nabla U_\al\rr=\mu (p_\al^*-1)|x|^{\alpha}U_\al^{p_\al^*-2}v, \\
		&v\in D^{1,p}(\R^N).
	\end{aligned}
	\right.
\end{equation}
The Morse index of $U_\al$ is the sum of dimensions of the eigenspaces of  \eqref{29} associated with $\mu<1$. In the previous proof of Theorem \ref{th11}, we obtain the equation
\begin{equation}\label{210}
\left\{
\begin{aligned}
	&\quad-(p-1)\varphi_k''(r)-\frac{\varphi_k'(r)}{r}\lr(N-1)+\
	\frac{(p-2)(N+\al)}{1+r^{\frac{p+\al}{p-1}}}\rr+
	\la_k\frac{\varphi_k(r)}{r^2}\\
	&=\mu\frac{(N+\al)(Np+p\al-N+p)}{p-1}
	\frac{r^{\frac{p+\al}{p-1}-2}}{(1+r^{\frac{p+\al}{p-1}})^2}\varphi_k(r), \quad  r\in(0,\infty),\,\, \varphi_k\in\mathcal{B}, \\
	&\varphi_k'(0)=0\ \ \text{if}\ \ k=0\ \ \text{and}
	\ \ \varphi_k(0)=0\ \ \text{if} \ \ k\geq1.
\end{aligned}
\right.
\end{equation}
For every $k\geq0$, the equation \eqref{210} has a monotonically increasing sequence of eigenvalues $\mu_{i,k},\ i=1,2,\cdots$, and an associated eigenfunction that has exactly $i-1$ zeros in $(0, \infty)$.

~We claim that $\mu_{i,k}\geq1$ for all $i\geq2$ and for any $k\geq0$. In fact, by  transformation \eqref{e23}, similar to our proof of Theorem \ref{th11}, we have
\begin{equation}\label{212}
\left\{
\begin{aligned}
	&\quad-\eta_k''(s)-\frac{\eta_k'(s)}{s}\lr\frac{M-1}{p-1}+
	\frac{(p-2)M}{(p-1)(1+s^{\frac{p}{p-1}})}\rr+\frac{q^2\la_k}{(p-1)}\frac{\eta_k(s)}{s^2}\\
	&=\mu
	\frac{M(Mp-M+p)}{(p-1)^2}\frac{s^{\frac{p}{p-1}-2}}{(1+s^{\frac{p}{p-1}})^2}\eta_k(s), \quad s\in(0,\infty),\ \eta_k\in\mathcal{\widetilde{B}},\\
	&\eta_k'(0)=0\ \ \text{if}\ \ k=0\ \ \text{and}
	\ \ \eta_k(0)=0\ \ \text{if} \ \ k\geq1.
\end{aligned}
\right.
\end{equation}
For any fixed $\mu<1$, we consider the more general equation
\begin{align}\label{213}
	&\quad-\eta_k''(s)-\frac{\eta_k'(s)}{s}\lr\frac{M-1}{p-1}+
	\frac{(p-2)M}{(p-1)(1+s^{\frac{p}{p-1}})}\rr+\frac{\gamma}{(p-1)}\frac{\eta_k(s)}{s^2}\\
	&=\mu
	\frac{M(Mp-M+p)}{(p-1)^2}\frac{s^{\frac{p}{p-1}-2}}{(1+s^{\frac{p}{p-1}})^2}\eta_k(s)\nonumber,
\end{align}
which has a decreasing sequence of eigenvalues $\gamma_1(\mu)>\gamma_2(\mu)>\cdots$. Based on the min-max characterization of the eigenvalues and \eqref{26}, we get that
\begin{align}\label{214}
	M-1=\gamma_1(1)>\gamma_1(\mu)\ \ \text{and}\ \
	0=\gamma_2(1)>\gamma_2(\mu).
\end{align}	
Let $\varphi_{i,k}$ be an eigenfunction of \eqref{210} corresponding to the eigenvalue $\mu_{i,k}$ for some $i\geq 2, \ k\geq0$, and assume that $\mu_{i,k}<1$. Then $\varphi_{i,k}(r^{\frac{p}{p+\al}})$ is an eigenfunction of \eqref{212} associated with $\gamma(\mu_{i,k})=q^2\la_k\geq0$, which changes sign exactly $i-1$ times ($i\geq2$). However, according to \eqref{214}, we get $\gamma(\mu_{i,k})\leq\gamma_2(\mu_{i,k})<\gamma_2(1)=0$ for $i\geq 2$ and $k\geq0$. Thus a contradiction arises and our claim was proved.

~Therefore, it follows that only the first eigenvalue of \eqref{210} may be relevant for the Morse index when $i= 1$. By Theorem \ref{th11}, when $\alpha=\al(k)$, due to \eqref{28}, we get
\begin{align}\label{215}
\varphi_k(r)=\frac{r^{\frac{p+\alpha}{p(p-1)}}}
	{(1+r^{\frac{p+\alpha}{p-1}})^{\frac{N+\alpha}{p+\alpha}}}
\end{align}
is the first eigenfunction of \eqref{210} associated with the eigenvalue $\mu=1$. Moreover, for general $\alpha>0$ and $k\geq0$, we can verify that the first eigenfunction of \eqref{210} is
\begin{align}\label{q216}
\varphi_k(r)=\frac{r^{\rho_k}}
{(1+r^{\frac{p+\alpha}{p-1}})^{\nu_k}},
\end{align}
where $$\rho_k=\frac{\sqrt{((p-2)(N+\al)+N-p)^2+4(p-1)\la_k}-((p-2)(N+\al)+N-p)}{2(p-1)}$$ and
$$\nu_k=\frac{\sqrt{(N-p)^2+4(p-1)\la_k}+2\rho_k(p-1)+N-p}{2(p+\al)},$$ and the corresponding first eigenvalue
\begin{align}\label{216}
	\mu_{1,k}=\frac{(p+\al)^2\nu_k(\nu_k+1)+\nu_k(p+\al)(p-2)(N+\al)}{(N+\al)(Np+p\al-N+p)}.
\end{align}
Now we regard $\mu_{1,k}$ as a function of nonnegative real variable $k\geq0$. Since $\mu_{1,k}=1$ is equivalent to equation \eqref{27}, i.e., $k=\frac{(2p-Np)+\sqrt{4(p-1)\al^2+4p(N+p-2)\al+N^2p^2}}{2p}$, and $\mu_{1,k}$ given by \eqref{216} is monotonically increasing with respect to \(k\), thus the eigenvalue $\mu_{1,k}<1$ is equivalent to
$$k<\frac{(2p-Np)+\sqrt{4(p-1)\al^2+4p(N+p-2)\al+N^2p^2}}{2p},$$
and the eigenfunction of \eqref{210} associated with $\mu_{1,k}$ are linear combination of functions of the form
\begin{align}\label{e216}
	v_{k,i}=\frac{|x|^{\rho_k}}
	{(1+|x|^{\frac{p+\alpha}{p-1}})^{\nu_k}}\Phi_{k,i}\lr\frac{x}{|x|}\rr,\ \quad
	\text{where}\ \Phi_{k,i}\in\boldsymbol{\Phi}_k(\R^N).
\end{align}
Because $\text{dim}(\boldsymbol{\Phi}_k(\R^N))=\frac{(N+2k-2)(N+ k-3)!}{(N-2)!k!}$, the proof of Corollary \ref{c12} is completed.
\end{proof}

\bigskip

\section{The approximate problem in $B_{\frac1\var}(0)$}

In this section, we consider the approximate problem
\begin{align}\label{31}
	\begin{cases}
		-\Delta_p u=|x|^{\alpha}\lr u+U_\al\lr\frac1\var\rr\rr^{p_\al^*-1}, & x \in B_{\frac1\var}(0), \\
		u\geq0, & x\in B_{\frac1\var}(0),\\
		u=0, & x\in \partial B_{\frac1\var}(0),
	\end{cases}
\end{align}
where $\alpha$ is fixed, $U_\al(x)$ is defined in \eqref{14} and $B_{\frac1\var}(0)$ denotes the ball of radius $\frac1\var$ centered at the origin. Define
\begin{equation}\label{32}
	u_{\var,\alpha}(x)=\left\{
	\begin{aligned}
		&U_\al(x)-U_\al\lr\frac1\var\rr=\frac{C_{N,p,\alpha}}
		{(1+|x|^{\frac{p+\alpha}{p-1}
			})^{\frac{N-p}{p+\al}}}-
			\frac{C_{N,p,\alpha}\var^{\frac{N-p}{p-1}}}
			{(1+\var^{\frac{p+\alpha}{p-1}
				})^{\frac{N-p}{p+\al}}},&  \text{if}\ |x|\leq\frac1\var, \\
		&0, & \text{if}\ |x|>\frac1\var.
	\end{aligned}
	\right.
\end{equation}
For the sake of convenience, we define $\beta_\al(n):=U_\al\lr\frac1\var\rr=
\frac{C_{N,p,\alpha}\var^{\frac{N-p}{p-1}}}
{(1+\var^{\frac{p+\alpha}{p-1}
	})^{\frac{N-p}{p+\al}}}$.
\begin{lem}\label{lem31}
	For any $\al\geq0$ and any sufficiently small $0<\var<(p-1)^{-\frac{p-1}{p+\al}}$, the function $u_{\var,\alpha}$ is a radial solution of \eqref{31}, which is nondegenerate in the space of radial functions.
\end{lem}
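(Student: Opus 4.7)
The plan is to split the verification into two independent parts: a direct check that $u_{\var,\alpha}$ solves \eqref{31}, followed by a reduction of the radial linearized problem on $B_{\frac{1}{\var}}(0)$ to a second-order ODE whose unique solution regular at the origin is already identified (up to scalar) in the proof of Theorem \ref{th11}.

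For the first part, I would exploit the key observation that on $B_{\frac{1}{\var}}(0)$ the function $u_{\var,\alpha}$ differs from $U_\al$ only by the additive constant $\beta_\al(n)=U_\al(\tfrac{1}{\var})$. In particular, $\nabla u_{\var,\alpha}=\nabla U_\al$ and $u_{\var,\alpha}+\beta_\al(n)=U_\al$ pointwise on the ball, so combining with $-\Delta_p U_\al=|x|^\al U_\al^{p_\al^*-1}$ on $\R^N$ immediately yields $-\Delta_p u_{\var,\alpha}=|x|^\al(u_{\var,\alpha}+\beta_\al(n))^{p_\al^*-1}$ in $B_{\frac{1}{\var}}(0)$. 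The Dirichlet condition on $\partial B_{\frac{1}{\var}}(0)$ is immediate from the definition \eqref{32}, and $u_{\var,\alpha}\geq 0$ in the interior follows from the strict radial monotone decrease of $U_\al$.

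For the non-degeneracy part, I would take any radial $v\in W_0^{1,p}(B_{\frac{1}{\var}}(0))$ satisfying the linearization of \eqref{31} at $u_{\var,\alpha}$. Using the identities just noted, this linearized problem coincides with equation \eqref{16} restricted to $B_{\frac{1}{\var}}(0)$ with Dirichlet boundary condition. Writing $v(x)=\varphi(|x|)$ and invoking the spherical harmonic reduction from the proof of Theorem \ref{th11} at mode $k=0$ (so $\lambda_0=0$), the profile $\varphi$ must solve the ODE \eqref{22} with $k=0$. This ODE has a regular singular point at $r=0$; the indicial analysis there (or alternatively the explicit Sturm--Liouville computation carried out in the proof of Theorem \ref{th11}) shows that the solutions regular at the origin form a one-dimensional subspace, spanned by
$$\varphi(r)=\frac{(p-1)-r^{\frac{p+\al}{p-1}}}{\left(1+r^{\frac{p+\al}{p-1}}\right)^{\frac{N+\al}{p+\al}}}.$$

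Finally, I would impose the Dirichlet condition $\varphi(\tfrac{1}{\var})=0$. The explicit form shows that $\varphi$ has a unique positive zero located at $r_*=(p-1)^{\frac{p-1}{p+\al}}$, and the standing hypothesis $\var<(p-1)^{-\frac{p-1}{p+\al}}$ is precisely $\frac{1}{\var}>r_*$; hence $\varphi(\tfrac{1}{\var})\neq 0$ and the scalar multiple must vanish, giving $v\equiv 0$. I do not foresee a serious obstacle: the substantive step (identifying the unique radial element of the kernel of \eqref{16} at mode $k=0$) is already available from the proof of Theorem \ref{th11}, and the quantitative threshold on $\var$ emerges directly from the location of the unique positive zero of the explicit profile $\varphi$.
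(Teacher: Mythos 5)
Your verification that $u_{\var,\alpha}$ solves \eqref{31} is correct and identical in substance to the paper's. The non-degeneracy part, however, follows a genuinely different route from the paper, and that route has a flawed step.

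The paper's argument is a Wronskian argument: multiplying the linearized ODE \eqref{34} for $\phi$ by $z$ and the whole-line version \eqref{35} for $z$ by $\phi$, subtracting and integrating on $(0,1/\var)$, one extracts the boundary term and deduces $\phi'(1/\var)z(1/\var)=0$; since $z(1/\var)\neq 0$ (exactly the role of the hypothesis $\var<(p-1)^{-\frac{p-1}{p+\al}}$), this gives $\phi'(1/\var)=0$, and then \emph{uniqueness of the initial-value problem at the regular point $r=1/\var$} forces $\phi\equiv 0$. Your route instead asserts up front that the solutions of the $k=0$ ODE that are regular at the origin form a one-dimensional space spanned by $z$, and then concludes from $z(1/\var)\neq 0$.

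The gap is in your justification of that one-dimensionality. You offer two reasons. The ``explicit Sturm--Liouville computation carried out in the proof of Theorem~\ref{th11}'' does not support the claim: in Case 2 of that proof, the second solution $h=c(s)\psi_2(s)$ is excluded by computing its asymptotics as $s\to+\infty$ and showing $h\notin\mathcal{D}$, i.e.\ by a condition at infinity. On the ball $B_{1/\var}$ there is no condition at infinity, so that computation is simply irrelevant here. The alternative you mention, ``indicial analysis at $r=0$,'' is the right idea but is not carried out, and it is not as routine as the phrasing suggests: the indicial roots at $r=0$ are $0$ and $1-a_0$ with $a_0=N-1+\tfrac{(1+\al)(p-2)}{p-1}$, and when $1<p<2$ and $\al$ is large one can have $a_0<0$, so the second Frobenius solution $\sim r^{1-a_0}$ also satisfies $\varphi'(0)=0$ pointwise. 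What rules it out is not its pointwise behavior but the fact that, as a PDE solution, it produces a Dirac mass at the origin in the weak formulation (equivalently, the flux $r^{N-1}|U_\al'|^{p-2}\varphi'$ does not vanish at $r=0$). This is precisely the content that the paper's Wronskian boundary term encodes, and it is the piece your proposal glosses over. To make your version rigorous you would need to actually argue that the second Frobenius solution is inadmissible as a radial weak solution of \eqref{33}; citing Theorem~\ref{th11}'s proof does not do this, and the plain pointwise statement ``regular at the origin'' is insufficient for all admissible $(p,\al)$.
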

\begin{proof}
For any sufficiently small $0<\var<(p-1)^{-\frac{p-1}{p+\al}}$ and $x\in B_{\frac1\var}(0)$, it is easy to verify that $u_{\var,\alpha}$ is a radial solution of \eqref{31}. If the linearized problem
\begin{equation}\label{33}
\left\{
\begin{aligned}
&\quad-\textrm{div}(|\nabla U_\al|^{p-2}\nabla \phi)-(p-2)
\textrm{div}
\lr|\nabla U_\al|^{p-4}(\nabla U_\al\cdot\nabla \phi)\nabla U_\al\rr\\
&= (p_\al^*-1)|x|^{\alpha}\lr u_{\var,\al}+U_\al\lr\frac1\var\rr\rr^{p_\al^*-2}\phi \ \quad \text{in}\ B_{\frac1\var}(0), \\
&\phi=0
	\quad\quad 	\quad\quad 	\quad\quad 	\quad\quad 	\quad\quad 	\quad\quad 	\quad\quad	\quad\quad \text{on} \,\, \partial B_{\frac1\var}(0)
	\end{aligned}
	\right.
\end{equation}
has no nontrivial radial solution, then we say $u_{\var,\alpha}$ is radially nondegenerate. Write \eqref{33} in the form of radial coordinates, i.e.,
\begin{equation}\label{34}
	\left\{
	\begin{aligned}
		&-(p-1)\lr r^{N-1}|U_\al'|^{p-2}\phi'\rr'=(p_\al^*-1)r^{N+\alpha-1}U_\al^{p_\al^*-2}\phi   &\text{in}\ \lr0,\frac1\var\rr, \\
		&\phi'(0)=0,\quad\phi\lr\frac1\var\rr=0.
	\end{aligned}
	\right.
\end{equation}
Note that the function
$$z(r)=\frac{(p-1)-r^{\frac{p+\alpha}{p-1}}}
{(1+r^{\frac{p+\alpha}{p-1}})^{\frac{N+\alpha}{p+\alpha}}}$$
satisfies the linearized equation
\begin{align}\label{35}
	-(p-1)\lr r^{N-1}|U_\al'|^{p-2}z'\rr'=(p_\al^*-1)r^{N+\alpha-1}U_\al^{p_\al^*-2}z \quad \text{in}\ \R^+,\quad z'(0)=0.
\end{align}
However, we find that $z(\frac1\var)\ne0$ for $\var<(p-1)^{-\frac{p-1}{p+\al}}$.

~Multiplying \eqref{34} by $z$, \eqref{35} by $\phi$ and integrating in $\left(0,\frac1\var\right)$, we obtain
$$\phi'\lr\frac1\var\rr=0.$$
Moreover, $\phi\lr\frac1\var\rr=0$, from the classical existence and uniqueness theorem for initial value problems, we must have $\phi\equiv0$. This shows that \eqref{34} does not have any nontrivial radial solutions, so $u_{\var,\al}$ is radially nondegenerate.
\end{proof}

\begin{prop}\label{pp32}
	Let $\al_n$ and $\var_n$ be sequences such that $\al_n\to\al>0$ and $\var_n\to0$ as $n\to+\infty$.
	Defining $u_n:=u_{\var_n,\al_n}$ with $u_{\var,\al}$ be given by \eqref{32}, we have that
\begin{align}\label{36}
	\|u_n-U_\al\|_X\to0,\quad\quad as \ n\to+\infty.
\end{align}	
\end{prop}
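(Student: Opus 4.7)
The plan is to verify the two components of the norm $\|g\|_X = \max\{\|g\|_{1,p}, \|g\|_\gamma\}$ separately, exploiting that the limiting profile $U_\al(x)\sim C|x|^{-(N-p)/(p-1)}$ at infinity has exponents independent of $\al$. Throughout, I fix $n$ large so that $\al_n$ stays in a small interval $[\al/2, 2\al]$, which allows uniform bounds in $n$ on the constants $C_{N,p,\al_n}$ and on the exponents $(p+\al_n)/(p-1)$ and $(N-p)/(p+\al_n)$.

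For the $D^{1,p}$ part, observe that $\nabla u_n = \mathbf{1}_{B_{1/\var_n}}\nabla U_{\al_n}$, so I split
\[
\|\nabla u_n-\nabla U_\al\|_p^p \;\lee\; 2^{p-1}\!\int_{B_{1/\var_n}}|\nabla U_{\al_n}-\nabla U_\al|^p\md x \;+\; 2^{p-1}\!\int_{\R^N\setminus B_{1/\var_n}}|\nabla U_\al|^p\md x.
\]
The second integral vanishes as $n\to\infty$ because $U_\al \in D^{1,p}(\R^N)$ and $1/\var_n \to\infty$. For the first integral, I would invoke the dominated convergence theorem: $\nabla U_{\al_n}(x)\to \nabla U_\al(x)$ pointwise by continuous dependence of the explicit formula on $\al$, and using
\[
|\nabla U_\al(x)|\;=\;C_{N,p,\al}\,\tfrac{N-p}{p-1}\,\tfrac{|x|^{(p+\al)/(p-1)-1}}{(1+|x|^{(p+\al)/(p-1)})^{(N-p)/(p+\al)+1}},
\]
I build a uniform dominator $G(x)\in L^p(\R^N)$ by combining the two bounding profiles corresponding to $\al/2$ and $2\al$; both behave like $C|x|^{-(N-1)/(p-1)}$ at infinity, which is $L^p$ since $(N-1)p/(p-1)>N$.

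For the weighted $L^\infty$ part, I split $\R^N$ into $|x|>1/\var_n$ and $|x|\leq 1/\var_n$. On the exterior, $u_n-U_\al=-U_\al$ and
\[
(1+|x|)^\gamma|U_\al(x)|\;\lee\; C(1+|x|)^{\gamma-(N-p)/(p-1)}\;\lee\; C\var_n^{(N-p)/(p-1)-\gamma}\To 0,
\]
where the crucial decay uses $\gamma<(N-p)/(p-1)$. On the interior I write $u_n-U_\al=(U_{\al_n}-U_\al)-\beta_\al(n)$ with $\beta_\al(n)=U_{\al_n}(1/\var_n)$. The constant boundary correction contributes $(1+|x|)^\gamma \beta_\al(n)\lee(1+1/\var_n)^\gamma\cdot C\var_n^{(N-p)/(p-1)}(1+o(1))\to 0$ by the same exponent comparison.

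The last piece, $(1+|x|)^\gamma|U_{\al_n}(x)-U_\al(x)|$ on $B_{1/\var_n}$, I control by a further split at some large $R$ (independent of $n$): on $|x|\leq R$, continuous dependence of the explicit profile $U_\al$ on the parameter $\al$ gives uniform convergence on the compact set $\overline{B_R}$; on $R<|x|\leq 1/\var_n$, the uniform asymptotic $U_{\al_n}(x),U_\al(x)\leq C|x|^{-(N-p)/(p-1)}$ yields $(1+|x|)^\gamma|U_{\al_n}-U_\al|\leq 2C\,R^{\gamma-(N-p)/(p-1)}$, which is arbitrarily small for $R$ large because of $\gamma<(N-p)/(p-1)$. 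Combining these three contributions gives $\|u_n-U_\al\|_\gamma\to 0$, and together with the gradient estimate yields $\|u_n-U_\al\|_X\to 0$. The only mildly delicate step is finding the uniform $L^p$-dominator for $|\nabla U_{\al_n}|$ when $\al_n$ varies, but since $\al_n$ is eventually confined to a compact interval inside $(0,\infty)$ this poses no real obstacle.
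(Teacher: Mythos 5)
Your proof is correct and follows essentially the same strategy as the paper: split $\|u_n-U_\al\|_{1,p}$ into the integral over $B_{1/\var_n}$ (handled by dominated convergence) and the exterior tail of $U_\al$; and split $\|u_n-U_\al\|_\gamma$ into the exterior piece, the constant boundary correction $\beta_{\al_n}(n)$, and the profile difference $|U_{\al_n}-U_\al|$, all controlled via $\gamma<\frac{N-p}{p-1}$. The one cosmetic difference is in the last piece: the paper applies the mean value theorem in $\al$ and records the bound $O(|\al_n-\al|)+O(\var_n^{\frac{N-p}{p-1}-\gamma})$, whereas you split at a fixed radius $R$, use uniform convergence on $\overline{B_R}$, and absorb the tail $R<|x|\leq1/\var_n$ by the uniform decay $U_{\al_n},U_\al\lesssim|x|^{-\frac{N-p}{p-1}}$; your version has the small advantage of avoiding any discussion of $\al$-differentiability of $U_\al$ (and the attendant logarithmic factor), but both are completely routine and equally valid.
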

\begin{proof}
	According to the definition of norm \eqref{110}, we need to prove $\|u_n-U_\al\|_{1,p}\to0$ and $\|u_n-U_\al\|_{\gamma}\to0$ as $n\to+\infty$.

We obtain
\begin{align*}
	\int_{\R^N}\left|\nabla u_n-\nabla U_\al\right|^p\md x =\int_{B_{\frac{1}{\var_n}}}\left|\nabla U_{\al_n}-\nabla U_\al\right|^p\md x +\int_{\R^N\backslash B_{\frac{1}{\var_n}}}\left|\nabla U_\al\right|^p\md x .
\end{align*}	
Because $U_\al\in D^{1,p}(\R^N)$, we have
\begin{align*}
	\int_{\R^N\backslash B_{\frac{1}{\var_n}}}\left|\nabla U_\al\right|^p\md x\to0, \quad\text{as}\ n\to+\infty .
\end{align*}
From Lebesgue's dominated convergence theorem, we get
\begin{align*}
\int_{B_{\frac{1}{\var_n}}}\left|\nabla U_{\al_n}-\nabla U_\al\right|^p\md x\to0, \quad\text{as}\ n\to+\infty .
\end{align*}
Using  the mean value theorem, it follows that
	\begin{align*}
	\|u_n-U_\al\|_{\gamma}&=\sup_{x\in\R^N}(1+|x|)^\gamma|u_n-U_\al|\\
&\leq \sup_{x\in B_{\frac{1}{\var_n}}}(1+|x|)^\gamma|U_{\al_n}(x)-U_\al(x)|
+\sup_{x\in B_{\frac{1}{\var_n}}}(1+|x|)^\gamma U_{\al_n}\lr\frac1{\var_n}\rr
\\
&\quad+C_{N,\al,p}\sup_{x\in \R^N\backslash B_{\frac{1}{\var_n}}}\frac{(1+|x|)^\gamma }{(1+|x|^{\frac{p+\al}{p-1}})^{\frac{N-p}{p+\al}}}\\
&\leq O(|\al-\al_n|)+O\lr\var_n^{\frac{N-p}{p-1}-\gamma}\rr,
\end{align*}	
where we used the fact $\gamma<\frac{N-p}{p-1}$. This finishes our proof of Proposition \ref{pp32}.
\end{proof}

\smallskip

\subsection{Convergence of the spectrum}

Consider the linearized problem associated with \eqref{31}, i.e.,
\begin{equation}\label{37}
	\left\{
	\begin{aligned}
		&\quad-\textrm{div}(|\nabla U_\al|^{p-2}\nabla v)-(p-2)
		\textrm{div}
		\lr|\nabla U_\al|^{p-4}(\nabla U_\al\cdot\nabla v)\nabla U_\al\rr\\
		&= (p_\al^*-1)|x|^{\alpha}\lr u_{\var,\al}+U_\al\lr\frac1\var\rr\rr^{p_\al^*-2}v &\text{in}\ B_{\frac1\var}(0), \\
		&v=0, &\text{on}\ \partial B_{\frac1\var}(0).
	\end{aligned}
	\right.
\end{equation}
Since $u_{\var,\al}=U_\al(x)-U_\al(\frac1\var)$, the equation \eqref{37} can be reduced to
\begin{equation}\label{38}
	\left\{
	\begin{aligned}
		&\quad-\textrm{div}(|\nabla U_\al|^{p-2}\nabla v)-(p-2)
		\textrm{div}
		\lr|\nabla U_\al|^{p-4}(\nabla U_\al\cdot\nabla v)\nabla U_\al\rr\\
		&= (p_\al^*-1)|x|^{\alpha}U_\al^{p_\al^*-2}v &\text{in}\ B_{\frac1\var}(0), \\
		&v=0, &\text{on}\ \partial B_{\frac1\var}(0).
	\end{aligned}
	\right.
\end{equation}
The problem \eqref{38} can be decomposed into the radial part and the angular part by using the spherical harmonic functions in similar way as in Section 2. $v$ is a solution of \eqref{38} if and only if $v_k(r)=\int_{\mms^{N-1}}v(r,\theta)\Phi_k(\theta)\md\theta$ is a solution of
\begin{equation}\label{39}
	\left\{
	\begin{aligned}
		&\quad-(p-1)v_k''(r)-\frac{v_k'(r)}{r}\lr(N-1)+\
		\frac{(p-2)(N+\al)}{1+r^{\frac{p+\al}{p-1}}}\rr+
		\la_k\frac{v_k(r)}{r^2}\\
		&=\frac{(N+\al)(Np+p\al-N+p)}{p-1}
		\frac{r^{\frac{p+\al}{p-1}-2}}{(1+r^{\frac{p+\al}{p-1}})^2}v_k(r), \quad  r\in\left(0,\frac1\var\right),\ \varphi_k\in\mathcal{B}, \\
		&v_k'(0)=0\ \ \text{if}\ \ k=0\ \ \text{and}
		\ \ v_k(0)=0\ \ \text{if} \ \ k\geq1
	\end{aligned}
	\right.
\end{equation}
for $\la_k=k(N+k-2)$, where $\Phi_k(\theta)$ denotes the $k$-th spherical harmonic function. Thus, if $v_k$ is the solution to \eqref{39}, we deduce that, all eigenfunctions of \eqref{37} are given by $v_k(r)\Phi_k(\theta)$. According to Lemma \ref{lem31}, we get that, only when $k\ne0$, \eqref{39} possesses a solution. Let's consider the following eigenvalue problem
\begin{equation}\label{310}
	\left\{
	\begin{aligned}
		&\quad-(p-1)w''(r)-\frac{w'(r)}{r}\lr(N-1)+\
		\frac{(p-2)(N+\al)}{1+r^{\frac{p+\al}{p-1}}}\rr\\&
		-\frac{(N+\al)(Np+p\al-N+p)}{p-1}
		\frac{r^{\frac{p+\al}{p-1}-2}}{(1+r^{\frac{p+\al}{p-1}})^2}w(r)
		=\mu\frac{w(r)}{r^2}
	, \quad  r\in\lr0,\frac1\var\rr,\\
		&w(0)=0=w\lr\frac1\var\rr.
	\end{aligned}
	\right.
\end{equation}
There is a sequence of eigenvalues $\mu_i^\var(\al)$ for \eqref{310}. So the equation \eqref{39} is equivalent to finding integers $i, k\geq1$ and $\alpha>0$ such that
\begin{align}\label{311}
	-\la_k=\mu_i^\var(\al).
\end{align}

Based on the above analysis, we will consider the eigenvalues of equation \eqref{310}, and prove the following Lemma.
\begin{lem}\label{lem33}
	For any $\var>0$ sufficiently small, we have that
	\begin{align}\label{312}
		\mu_1^\var(\al)<0
	\end{align}	
and
	\begin{align}\label{313}
	\mu_2^\var(\al)>0.
\end{align}	
\end{lem}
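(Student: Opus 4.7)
The plan is to exploit the two explicit whole-line solutions of \eqref{310} already identified in the proof of Theorem \ref{th11}: setting
\begin{align*}
\psi_1(r) &:= \frac{r^{(p+\al)/(p(p-1))}}{(1+r^{(p+\al)/(p-1)})^{(N+\al)/(p+\al)}},\\
\psi_2(r) &:= \frac{(p-1) - r^{(p+\al)/(p-1)}}{(1+r^{(p+\al)/(p-1)})^{(N+\al)/(p+\al)}},
\end{align*}
one has that $\psi_1$ solves \eqref{310} on $(0,\infty)$ with the value $\mu_* := -(M-1)(p+\al)^2/p^2 < 0$ and satisfies $\psi_1(0) = 0$ with $\psi_1 > 0$, while $\psi_2$ solves \eqref{310} with $\mu = 0$, has $\psi_2(0) = p-1 > 0$, and changes sign exactly once on $(0,\infty)$, at $r_0 := (p-1)^{(p-1)/(p+\al)}$. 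Throughout we take $\var$ so small that $1/\var > r_0$.

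For \eqref{312} I would use the variational characterization of $\mu_1^\var$ as the infimum of the Rayleigh quotient associated with the self-adjoint form of \eqref{310} on $(0, 1/\var)$. Take a smooth cutoff $\chi_\var$ equal to $1$ on $[0, 1/(2\var)]$ and vanishing at $1/\var$, and use the admissible test function $\phi_\var := \chi_\var \psi_1$, which is in the appropriate Dirichlet space because $\psi_1(0) = 0$. Since $\psi_1$ lies in the corresponding weighted Sobolev space on $(0,\infty)$ and decays at infinity, a standard cutoff estimate shows that both numerator and denominator of the Rayleigh quotient of $\phi_\var$ converge to those of $\psi_1$ as $\var \to 0$. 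The limiting quotient equals $\mu_* < 0$, hence $\mu_1^\var \leq R[\phi_\var] < 0$ for all sufficiently small $\var$.

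For \eqref{313} I would argue by contradiction via Picone's identity. Suppose $\mu_2^\var \leq 0$ and let $w_2$ be the corresponding Dirichlet eigenfunction on $(0, 1/\var)$; by Sturm oscillation it has exactly one interior zero $r_1 \in (0, 1/\var)$, and we normalize $w_2 > 0$ on $(0, r_1)$, $w_2 < 0$ on $(r_1, 1/\var)$. Then the restriction of $w_2$ to one of the subintervals $(0, r_1)$ or $(r_1, 1/\var)$ is a first Dirichlet eigenfunction on that subinterval, still with eigenvalue $\mu_2^\var$. Now, if $r_1 < r_0$ apply Picone's identity with the strictly positive solution $\psi_2$ on $[0, r_1]$, and if $r_1 > r_0$ apply it with the strictly positive solution $-\psi_2$ on $[r_1, 1/\var]$: in either case, substituting $w = \psi_2 v$ (respectively $w = -\psi_2 v$) for any test function $w$ vanishing at the subinterval's endpoints yields
\[
Q[w] \;=\; \int p_0(r)\, \psi_2(r)^2\, |v'(r)|^2\, dr \;>\; 0 \quad \text{for } w \not\equiv 0,
\]
where $Q$ is the Dirichlet form of \eqref{310} and $p_0$ is the self-adjoint weight. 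Consequently the first Dirichlet eigenvalue on the chosen subinterval is strictly positive, contradicting $\mu_2^\var \leq 0$. The boundary case $r_1 = r_0$ is handled by a one-sided approximation: Picone applied on $(0, r_0 - \delta)$ gives strict positivity, and then $\delta \to 0$ together with continuity of the first eigenvalue in the right endpoint yields the required contradiction.

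The principal technical obstacle is the rigorous justification of the boundary terms in Picone's identity at the singular endpoint $r = 0$. This reduces to showing that $p_0(r)\,\psi_2(r)\,\psi_2'(r)\,v(r)^2 \to 0$ as $r \to 0^+$. The indicial analysis of \eqref{310} at the regular singular point $r = 0$ gives $w_2(r) \sim c\,r^{\sigma_+}$ with positive indicial exponent $\sigma_+$ (as $\mu_2^\var < 0$), while $p_0(r) \sim r^\beta$ with $\beta := ((N-1) + (p-2)(N+\al))/(p-1)$ and $\psi_2'(r) = O(r^{(p+\al)/(p-1) - 1})$; combining these estimates with the identity $A = (p-1)(\beta-1)$ controlling the indicial equation, the boundary contribution is of order $r^{\beta + 2\sigma_+ + (p+\al)/(p-1) - 1}$, a positive power of $r$, so it vanishes as $r \to 0^+$.
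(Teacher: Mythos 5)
Your proof of \eqref{312} takes essentially the same route as the paper: the Rayleigh quotient $I$ in \eqref{315} is tested against $\eta\cdot\psi_1$ with $\psi_1$ the explicit whole-line first eigenfunction (the paper's $g(r)$) and $\eta$ a cutoff. The paper cuts off near both $r=0$ and $r=1/\varepsilon$ and reduces the sign of the numerator to the single inequality \eqref{318}, while you use only a right-end cutoff and pass to the limit of the full quotient; both versions hinge on the same finiteness of the weighted integrals of $\psi_1$ and $\psi_1'$.

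For \eqref{313}, however, your argument is genuinely different. The paper invokes strict domain monotonicity of Dirichlet eigenvalues, $\mu_2^{\varepsilon}(\alpha)>\mu_2(\alpha)$, together with the fact (established in the proof of Theorem \ref{th11} via the identification $\mu_2(\alpha)=-\gamma_2/q^2$ with $\gamma_2=0$) that $\mu_2(\alpha)=0$ — a very short argument. You instead combine Sturm oscillation (the second Dirichlet eigenfunction $w_2$ has exactly one interior zero $r_1$) with Picone's identity, using $\pm\psi_2$ as a positive solution of the $\mu=0$ equation on whichever side of $r_0=(p-1)^{(p-1)/(p+\alpha)}$ the subinterval lies. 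This is more self-contained (it does not need the whole-line eigenvalue $\mu_2(\alpha)$ to have been computed), but it requires some extra care in a singular Sturm--Liouville setting. Your proposal as written has one soft spot: for the coincidence case $r_1=r_0$ you propose a one-sided approximation plus continuity of the first Dirichlet eigenvalue in the endpoint, but that only yields $\mu_2^{\varepsilon}\geq 0$, not the strict inequality. The fix is to apply Picone directly on $(0,r_0)$: since $\psi_2(r_0)=w_2(r_0)=0$, the Wronskian $W(w_2,\psi_2)=w_2'\psi_2-w_2\psi_2'$ vanishes to second order at $r_0$, so the Picone integrand $p_0 W^2/\psi_2^2$ stays integrable, the boundary term $p_0\psi_2' w_2^2/\psi_2$ still tends to $0$, and $Q[w_2]>0$ follows because $w_2/\psi_2$ cannot be constant (it vanishes at the origin while $\psi_2(0)=p-1\neq 0$). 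With that adjustment the Picone route closes cleanly, and the two approaches become fully interchangeable.
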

\begin{proof}
The standard calculation shows that $\mu_1^\var(\al)<0$ for $\var>0$ small enough.  In fact, if $w$ is an eigenfunction of the corresponding equation with the eigenvalue $\mu_1^\var(\al)$, i.e.,
\begin{equation}\label{314e}
	\left\{
	\begin{aligned}
		&\quad-(p-1)w''(r)-\frac{w'(r)}{r}\lr(N-1)+\
		\frac{(p-2)(N+\al)}{1+r^{\frac{p+\al}{p-1}}}\rr\\&
		-\frac{(N+\al)(Np+p\al-N+p)}{p-1}
		\frac{r^{\frac{p+\al}{p-1}-2}}{(1+r^{\frac{p+\al}{p-1}})^2}w(r)
		=\mu_1^\var(\al)\frac{w(r)}{r^2}
	, \quad  r\in\lr0,\frac1\var\rr,\\
		&w(0)=0=w\lr\frac1\var\rr.
	\end{aligned}
	\right.
\end{equation}
According to the definition of the eigenvalue, $\mu_1^\var(\al)$ is the minimum value point corresponding to the following functional
\begin{equation}\label{315}
	I(w)=\frac{\int_0^{\frac1\var}r^{N-1}|U_\al'|^{p-2}|w'|^2-(p_\al^*-1)\int_0^{\frac1\var}r^{N-1}|U_\al|^{p_\al^*-2}w^2}
{\int_0^{\frac1\var}r^{N-3}|U_\al'|^{p-2}w^2},
\end{equation}
i.e.,
$$\mu_1^\var(\al):=\min_{w\in\mathcal{B}} I(w).$$
We can use the test function
	$$w(r)=\frac{r^{\frac{p+\alpha}{p(p-1)}}}
	{(1+r^{\frac{p+\alpha}{p-1}})^{\frac{N+\alpha}{p+\alpha}}}\eta_\var(r)=:g(r)\eta_\var(r)$$
to prove \eqref{312}, where $\eta_\var(r)\in C_0^\infty(0,\frac1\var)$ is a suitable cut-off function.
From \eqref{315}, it's equivalent to proving that the numerator of \eqref{315} is strictly less than $0$, i.e.,
\begin{align}\label{316}
	&\quad\int_0^{\frac1\var}r^{N-1}|U_\al'|^{p-2}|g'|^2\eta_\var^2+\int_0^{\frac1\var}r^{N-1}|U_\al'|^{p-2}|g|^2|\eta_\var'|^2+2\int_0^{\frac1\var}r^{N-1}|U_\al'|^{p-2}gg'\eta_\var\eta'_\var\\
&<(p_\al^*-1)\int_0^{\frac1\var}r^{N-1}|U_\al|^{p_\al^*-2}g^2\eta_\var^2,\nonumber
\end{align}
and $g(r)$ satisfies the equation
\begin{align}\label{317}
-(r^{N-1}|U_\al'|^{p-2}g')'=(p_\al^*-1)r^{N-1}|U_\al|^{p_\al^*-2}g-\lambda_1 r^{N-3}|U_\al'|^{p-2}g.
\end{align}
Multiplying \eqref{317} with $g\eta_\var^2$ and integrating, and putting it into \eqref{316}, we derive that \eqref{316} is equivalent to
\begin{align}\label{318}
\int_0^{\frac1\var}r^{N-1}|U_\al'|^{p-2}|g|^2|\eta_\var'|^2<\la_1\int_0^{\frac1\var}r^{N-3}|U_\al'|^{p-2}|g|^2\eta_\var^2.
\end{align}
Taking
\begin{equation*}
	\eta_\var=0\ \text{in} \ (0,\var)\ \text{and}\ \lr\frac{1}{2\var},\frac{1}{\var}\rr,\quad  \eta_\var=1\ \text{in} \ \left(2\var,\frac{1}{4\var}\right), \quad \eta_\var\in(0,1)\ \text{in} \ (\var,2\var)\ \text{and}\ \lr\frac{1}{4\var},\frac{1}{2\var}\rr,
\end{equation*}
we can derive \eqref{318} and hence \eqref{312}.

As to \eqref{313}, it follows from the monotonicity of the eigenvalues with respect to the domain. Let's use $\mu=\mu_i(\al)$ to represent the eigenvalue of the problem
\begin{equation}\label{314}
	\left\{
	\begin{aligned}
		&-(p-1)w''(r)-\frac{w'(r)}{r}\lr(N-1)+\
		\frac{(p-2)(N+\al)}{1+r^{\frac{p+\al}{p-1}}}\rr\\&\quad
		-\frac{(N+\al)(Np+p\al-N+p)}{p-1}
		\frac{r^{\frac{p+\al}{p-1}-2}}{(1+r^{\frac{p+\al}{p-1}})^2}w(r)
		=\mu\frac{w(r)}{r^2}
	, \quad  r\in\lr0,+\infty\rr,\\
		&w\in\mathcal{B},
	\end{aligned}
	\right.
\end{equation}
where $\mathcal{B}$ is defined by \eqref{23b}. We obtain that $\mu_2^\var(\al)>\mu_2(\al)$. By \eqref{e23}, \eqref{213}, \eqref{214}, we derive $\mu_2(\al)=0$, which completes our proof.
\end{proof}

\begin{lem}\label{lem34}
	Let $\al>0$ and $\var_n$ be a sequence such that $\var_n\to0$ as $n\to+\infty$.
	Let $\mu_1^{\var_n}(\al)$ be the first eigenvalue for \eqref{310} in $\lr0,\frac1{\var_n}\rr$ related to the exponent $\al$. Then
\begin{align}\label{320}
\mu_1^{\var_n}(\al)\to\mu_1(\al)=-\frac{(p+\al)(Np+p\al-p-\al)}{p^2},\quad\quad as \,\, n\to+\infty.
\end{align}	
Furthermore, the convergence of \eqref{320} is uniform in $\al$ on compact sets of $(0,+\infty)$.

Finally, let $w_n(r)$ be the first positive eigenfunction of \eqref{310} with respect to $\mu_1^{\var_n}(\al)$ such that $\|w_n\|_{L^\infty}=1$, we get
\begin{align}
&|w_n'(r)|\leq Cr^{-\frac{N-1}{p-1}},\quad\quad |w_n(r)|\leq Cr^{-\frac{N-p}{p-1}},\label{321}\\
&w_n(r)\to w(r)=\frac{r^{\frac{p+\al}{p(p-1)}}}
{(1+r^{\frac{p+\al}{p-1})^{\frac{N+\al}{p+\al}}}}\label{322}
\end{align}
uniformly in $r$ on compact sets of $[0,+\infty)$.
\end{lem}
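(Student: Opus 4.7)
The strategy is to transfer the analysis to the limiting radial problem on $(0,+\infty)$ via the change of variable $s = r^{(p+\al)/p}$, under which \eqref{314} reduces to the equation \eqref{qq210} with the identification $\gamma = -\mu(p+\al)^2/p^2$. The proof of Theorem \ref{th11} already classifies the admissible spectrum of \eqref{qq210} in $\mathcal{D}$ as the strictly decreasing sequence $\gamma_1 = M-1 > \gamma_2 = 0 > \gamma_3 > \cdots$, with $\psi_1$ in \eqref{d214} the (positive) principal eigenfunction. Reverting to the $r$-variable, and noting that the case $\gamma_2 = 0$ corresponds to $k=0$ and was excluded when passing from \eqref{38} to \eqref{310}, the first admissible negative eigenvalue is
\begin{equation*}
\mu_1(\al) = -\frac{(M-1)(p+\al)^2}{p^2} = -\frac{(p+\al)(Np+p\al-p-\al)}{p^2},
\end{equation*}
with corresponding principal eigenfunction precisely $w(r)$ in \eqref{322}.

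For the upper bound $\limsup_n \mu_1^{\var_n}(\al) \leq \mu_1(\al)$, I would use the Rayleigh quotient \eqref{315} with the test function $w \eta_{\var_n}$, where $\eta_{\var_n}$ is a smooth cut-off equal to $1$ on $(\var_n,\tfrac{1}{2\var_n})$ and supported in $(\tfrac{\var_n}{2},\tfrac{1}{\var_n})$. Since $w$ solves the limit Euler-Lagrange equation with eigenvalue $\mu_1(\al)$, integration by parts reduces the Rayleigh quotient of $w\eta_{\var_n}$ to $\mu_1(\al)$ plus contributions supported where $\eta_{\var_n}'\neq 0$; these vanish as $\var_n\to 0$ thanks to the weighted integrability of $|U'_\al|^{p-2}w^2/r^2$ and $|U'_\al|^{p-2}|w|^2$ near $0$ and $\infty$.

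For the lower bound together with the convergence of eigenfunctions, I would work with the normalized first positive eigenfunctions $w_n$ and first establish the uniform decay \eqref{321}. Writing the ODE in divergence form,
\begin{equation*}
-(p-1)\lr r^{N-1} |U'_\al|^{p-2} w'_n \rr' = (p_\al^*-1) r^{N+\al-1} U_\al^{p_\al^*-2} w_n + \mu_1^{\var_n}(\al) r^{N-3} |U'_\al|^{p-2} w_n,
\end{equation*}
and using $|U'_\al|(r) \sim r^{-(N-1)/(p-1)}$ at infinity (see \eqref{eq0806+}), one integrates between $r$ and $1/\var_n$ and matches the Dirichlet condition $w_n(1/\var_n) = 0$. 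The asymptotic indicial analysis at infinity exhibits the two modes $r^{(p+\al)/(p(p-1))}$ and $r^{-(N-p)/(p-1)}$; the $L^\infty$-normalization rules out the growing one, while the outer Dirichlet condition selects the decaying profile. Combined with a barrier argument near the origin built from $w$ itself, this yields \eqref{321} uniformly in $n$, and an elementary rescaling of the divergence-form equation translates the bound on $w_n$ into the bound on $w'_n$. Arzel\`a-Ascoli now produces a $C^1_{\mathrm{loc}}((0,+\infty))$-subsequential limit $w^*\geq 0$ solving the limit ODE \eqref{314} with eigenvalue $\mu^* := \lim_n \mu_1^{\var_n}(\al)$, and lying in $\mathcal{B}$ by Fatou. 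The strong maximum principle forces either $w^*\equiv 0$ or $w^*>0$; the former is ruled out by a barrier comparison of $w_n$ with a fixed multiple of $w$ on a compact subset where $w>\delta>0$. The classification of Theorem \ref{th11} then pins down $\mu^* = \mu_1(\al)$ and identifies $w^*$ as $w$ after normalization, giving the full-sequence convergence by uniqueness.

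The hard part is the nondegeneracy of the limit, i.e.\ preventing the $L^\infty$-mass of $w_n$ from concentrating near the outer boundary $r = 1/\var_n$ (so that $w^*\equiv 0$ on every compact set). This is resolved by the barrier comparison mentioned above, which exploits the fact that $\mu_1^{\var_n}(\al)$ is already close to $\mu_1(\al)$ by the upper bound, so the linearized operator at $\mu_1^{\var_n}(\al)$ behaves like the one associated with $w$ on fixed compact annuli. Finally, uniformity in $\al$ over compact subsets of $(0,+\infty)$ is obtained from a standard diagonal argument: every constant and estimate above depends continuously on $\al$ through the ODE coefficients, through the weight $|U'_\al|^{p-2}$, and through the explicit reference function $w$.
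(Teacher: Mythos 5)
Your identification of the limit eigenvalue via the change of variable $s=r^{p/(p+\al)}$ and the classification in Theorem~\ref{th11} is correct, and the high-level outline (Rayleigh quotient upper bound, uniform decay, Arzel\`a--Ascoli, nondegeneracy of the limit) matches the paper's strategy. However, there is a genuine gap in the centerpiece of the argument, the uniform decay bound \eqref{321}. Your claimed ``indicial modes at infinity'' $r^{(p+\al)/(p(p-1))}$ and $r^{-(N-p)/(p-1)}$ are not the indicial exponents of the Euler-type limit $-(p-1)w''-\frac{N-1}{r}w'-\frac{\mu}{r^2}w\approx 0$ at $r\to\infty$; the indicial equation $(p-1)m^2+(N-p)m+\mu=0$ with $\mu=\mu_1(\al)<0$ has one positive root $m_+>0$ and one root $m_-<-\frac{N-p}{p-1}$ (indeed $m_-=-\frac{p(N-1)+\al(p-1)}{p(p-1)}$, the actual decay rate of $w$). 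What you listed is the behaviour of $w$ near $0$ and a non-sharp intermediate rate, conflated with modes at infinity. More fundamentally, knowing the local fundamental system near $\infty$ does not by itself give a uniform-in-$n$ bound for the Dirichlet solution on $(0,1/\var_n)$: the coefficients of both modes depend on $n$ and must be controlled. The paper does this by integrating the ODE in divergence form against the explicit weight $r^{N-1}|U_\al'|^{p-2}$, first deducing $w_n'<0$ for large $r$ from the sign structure \eqref{324}, then iterating the resulting integral bound finitely many times (the number of steps growing with $N$) until the sharp rate $r^{-(N-p)/(p-1)}$ is reached. Your ``barrier built from $w$ itself'' step cannot be invoked here without circularity, since $w$ is identified as the limit only after compactness, which itself needs \eqref{321}.

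Two further points are treated too casually. First, ruling out $w^*\equiv 0$ is already immediate once \eqref{321} holds uniformly: the maximum points of $w_n$ stay in a fixed compact set, so $\|w^*\|_\infty=1$; the additional ``barrier comparison on a compact subset where $w>\delta$'' is unnecessary and, as written, presupposes information about the limit. Second, the uniformity of $\mu_1^{\var_n}(\al)\to\mu_1(\al)$ on compact sets of $\al$ is asserted by a ``standard diagonal argument,'' but a diagonal argument alone does not produce uniform convergence; the paper instead proves it quantitatively by testing \eqref{323} against $w\,r^{N-1}|U_\al'|^{p-2}$ and \eqref{314} against $w_n\,r^{N-1}|U_\al'|^{p-2}$, subtracting, and showing the boundary term at $r=1/\var_n$ is $O(\var_n^{(N-1)/(p-1)+\al/p})$ uniformly for $\al$ in compact sets. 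Finally, your aside ``the case $\gamma_2=0$ corresponds to $k=0$ and was excluded when passing from \eqref{38} to \eqref{310}'' is a non sequitur: the first eigenvalue of \eqref{310} is simply the one associated to $\gamma_1=M-1$ after the rescaling, and no exclusion is involved in that identification.
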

\begin{proof}
Convergence in \eqref{320} arises from the dependence on the domain of eigenvalues or Sturm-Liouville theory.

Furthermore, from \eqref{22}-\eqref{23}, it follows that
$\mu_1(\al)=-\frac{(p+\al)(Np+p\al-p-\al)}{p^2}$.

   Let $w_n(r)$ be the first positive eigenfunction of \eqref{310} with respect to $\mu_1^{\var_n}(\al)$ such that $\|w_n\|_{L^\infty}=1$. Thus $w_n$ satisfies
\begin{equation}\label{323}
	\left\{
	\begin{aligned}
		&\quad-(p-1)w_n''(r)-\frac{w_n'(r)}{r}\lr(N-1)+\
		\frac{(p-2)(N+\al)}{1+r^{\frac{p+\al}{p-1}}}\rr\\&
		-\frac{(N+\al)(Np+p\al-N+p)}{p-1}
		\frac{r^{\frac{p+\al}{p-1}-2}}{(1+r^{\frac{p+\al}{p-1}})^2}w_n(r)
		=\mu_1^{\var_n}(\al)\frac{w_n(r)}{r^2}
	, \quad  r\in\lr0,\frac{1}{\var_n}\rr,\\
		&w_n\lr\frac{1}{\var_n}\rr=0,\quad\|w_n\|_{L^\infty}=1.
	\end{aligned}
	\right.
\end{equation}
Noting that, for $r$ large enough and $\al>0$, we have
\begin{align}\label{324}
\frac{(N+\al)(Np+p\al-N+p)}{p-1}\frac{r^{\frac{p+\al}{p-1}+N-3}}{(1+r^{\frac{p+\al}{p-1}})^2}+\mu_1^{\var_n}(\al)r^{N-3}<0,
\end{align}
where we have used $\mu_1^{\var_n}(\al)<0$ for $n$ large. Integrating \eqref{323} on $\lr r,\frac1{\var_n}\rr$, we obtain
	\begin{align}\label{325}
 & \quad r^{N-1}|U'_\al(r)|^{p-2}w'_n(r) \\
 &=\left(\frac{1}{\var_n}\right)^{N-1}
\Big|U'_\al\lr\frac{1}{\var_n}\rr\Big|^{p-2}w'_n\lr\frac{1}{\var_n}\rr+\frac{\mu_1^{\var_n}(\al)}{p-1}\int_r^{\frac{1}{\var_n}}t^{N-3}|U'_\al(t)|^{p-2}w_n(t)\md t \nonumber \\
&\quad+\frac{(N+\al)(Np+p\al-N+p)}{(p-1)^2}\int_r^{\frac{1}{\var_n}}
		\frac{t^{\frac{p+\al}{p-1}+N-3}|U'_\al(t)|^{p-2}}{(1+t^{\frac{p+\al}{p-1}})^2}w_n(t)\md t.\nonumber
	\end{align}
Noting that $0\leq w_n(r)\leq1$ and $w'_n\lr\frac{1}{\var_n}\rr\leq0$, from \eqref{324}, we obtain
\begin{align}\label{326}
 w'_n(r)<0\quad\quad\text{for}\ r\ \text{large\ enough}.
	\end{align}
Integrating \eqref{323} on $\lr 0,r\rr$, we obtain
	\begin{align}\label{327}
 -r^{N-1}|U'_\al(r)|^{p-2}w'_n(r)&=\frac{\mu_1^{\var_n}(\al)}{p-1}\int_0^{r}t^{N-3}|U'_\al(t)|^{p-2}w_n(t)\md t\\
&\quad+\frac{(N+\al)(Np+p\al-N+p)}{(p-1)^2}\int_0^{r}
		\frac{t^{\frac{p+\al}{p-1}+N-3}|U'_\al(t)|^{p-2}}{(1+t^{\frac{p+\al}{p-1}})^2}w_n(t)\md t.\nonumber
	\end{align}
Since $0\leq w_n(r)\leq1$ and $\mu_1^{\var_n}(\al)<0$ for $n$ large, from \eqref{326}, we get
\begin{equation}\label{328}
	|w'_n(r)|\leq\frac{C}{r^{N-1}|U'_\al(r)|^{p-2}}\times\left\{
	\begin{aligned}
		&C,&\text{if}\ N<2p+\al,\\
&C+\ln r,&\text{if}\ N=2p+\al,\\
&C+r^{\frac{N-2p-\al}{p-1}},&\text{if}\ N>2p+\al.
	\end{aligned}
	\right.
\end{equation}
If $N<2p+\al$, we get the optimal decay in \eqref{321}. If else, when $N\geq2p + \alpha$, from \eqref{328}, we have
\begin{equation}\label{329qq}
	|w_n(r)|\leq Cr^{-\frac{N-p}{p-1}}r^{\frac{N-2p-\al}{p-1}}.
\end{equation}
Then, inserting \eqref{328} into \eqref{327}, after $k$ steps of iteration, we obtain the following iterative sequence
\begin{equation}\label{330qq}
	|w_n(r)|\leq Cr^{f_k(N)}.
\end{equation}
After iterating the formula \eqref{330qq} once more, we can obtain
\begin{equation}\label{331qq}
	|w_n(r)|\leq Cr^{-\frac{N-p}{p-1}}r^{\frac{N-2p-\al}{p-1}}r^{f_k(N)}=:Cr^{f_{k+1}(N)},
\end{equation}
where $f_{k+1}(N):=-\frac{N-p}{p-1}+\frac{N-2p-\al}{p-1}+f_k(N)$ with $f_0(N)=\frac{N}{p-1}-\frac{2p+\al}{p-1}$. Thus we have $f_{k}(N)=\frac{N}{p-1}-\frac{2p+\al}{p-1}-k\frac{p+\al}{p-1}$. Let $f_k(N)=0$, we have $N=N_k:=(p+\al)k+2p+\al$ and $N_k\to+\infty$, as $k\to+\infty$. By using \eqref{330qq} instead of $|w_n|\leq1$, we can deduce in similar way as \eqref{328} that $|w'_n(r)|\leq\frac{C}{r^{N-1}|U'_\al(r)|^{p-2}}$ for any $N<N_k$. Consequently, we get \eqref{321}.

By \eqref{310} and \eqref{321}, we have $|w''_n(r)|\leq r^{-\frac{N+p-2}{p-1}}$. From the Arzel$\acute{a}$-Ascoli's Theorem, $w_n\to w$ in $\mathcal{B}$ weakly and uniformly w.r.t. $r$ on compact sets of $[0,+\infty)$. Using \eqref{321} again, we can pass the limit to \eqref{310} and get that $w$ is the solution of \eqref{314} corresponding to the eigenvalue $\mu_1(\al)$. Moreover, $w\not\equiv0$, indeed, from \eqref{321}, the maximum point of $w_n(r)$ converges to a point $r_0\in[0,+\infty)$ and $|w(r_0)|=1$ from the uniform convergence.

Finally, we prove the uniform convergence of $\mu_1^{\var_n}(\al)\to\mu_1(\al)$ in $\al$ on compact sets.  By multiplying \eqref{323} by $wr^{N-1}|U'_\al|^{p-2}$ and integrating on $\lr0,\frac1{\var_n}\rr$, multiplying \eqref{314} by $w_nr^{N-1}|U'_\al|^{p-2}$ and integrating on $\lr0,\frac1{\var_n}\rr$, and then subtracting, we get
\begin{align*}
 -\lr\frac{1}{\var_n}\rr^{N-1}
\Big|U'_\al\lr\frac{1}{\var_n}\rr\Big|^{p-2}w'_n\lr\frac{1}{\var_n}\rr w\lr\frac{1}{\var_n}\rr&=\frac{\lr\mu_1^{\var_n}(\al)-\mu_1(\al)\rr}{p-1}\int_0^{\frac{1}{\var_n}}t^{N-3}|U'_\al(t)|^{p-2}w_n(t)w(t)\md t.
	\end{align*}
From \eqref{321}, we have
\begin{align*}
 -\lr\frac{1}{\var_n}\rr^{N-1}
\Big|U'_\al\lr\frac{1}{\var_n}\rr\Big|^{p-2}w'_n\lr\frac{1}{\var_n}\rr w\lr\frac{1}{\var_n}\rr=O\left(\var_n^{\frac{N-1}{p-1}+\frac{\al}{p}}\right),
\end{align*}
as $n\to+\infty$, uniformly in $\al$ on compact sets of $(0,+\infty)$, while
\begin{align*}
\int_0^{\frac{1}{\var_n}}t^{N-3}|U'_\al(t)|^{p-2}w_n(t)w(t)\md t\to\int_0^{+\infty}t^{N-3}|U'_\al(t)|^{p-2}w^2(t)\md t,
	\end{align*}
as $n\to+\infty$, uniformly in $\al$ on compact sets of $(0,+\infty)$, which implies that
\begin{align*}
\sup_{\al\in K}|\mu_1^{\var_n}(\al)-\mu_1(\al)|=o(1),
	\end{align*}
as $n\to+\infty$, for any compact set $K\subset(0,+\infty)$. This completes our proof.
\end{proof}

\begin{prop}\label{pp35}
	Let $\al>0$ and $\var_n$ be a sequence such that $\var_n\to0$, as $n\to+\infty$. Let $\mu_1^{\var_n}(\al)$ be the first
eigenvalue of \eqref{323} in $\lr0,\frac1{\var_n}\rr$. Then we have that
\begin{align}\label{329}
	\frac{\partial \mu_1^{\var_n}(\al)}{\partial \al }\to-\frac{p(p+2\al+N-2)-2\al}{p^2}=\frac{\partial \mu_1(\al)}{\partial \al }<0
\end{align}	
uniformly on the compact set $K\subset(0,+\infty)$. Furthermore, for any integer $k\geq1$, the equation
\begin{align}\label{330}
	-k(N+k-2)=-\la_k=\mu_1^{\var_n}(\al)
\end{align}	
has only one solution $\al_k^n$ and
\begin{align}\label{331}
	\al_k^n\to \al(k), \quad as\ n\to+\infty,
\end{align}
where $\al(k)=\frac{p\sqrt{(N+p-2)^2+4(k-1)(p-1)(k+N-1)}-p(N+p-2)}{2(p-1)}$.	
\end{prop}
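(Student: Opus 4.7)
The plan is to prove the proposition in three stages: first derive the explicit formula for $\partial_\al \mu_1(\al)$ from Lemma \ref{lem34} and check its sign; then establish the uniform convergence $\partial_\al \mu_1^{\var_n}(\al) \to \partial_\al \mu_1(\al)$ via a Hellmann--Feynman-type identity; finally apply an implicit function theorem argument to obtain existence, uniqueness and convergence of the solutions $\al_k^n$.

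\textbf{Step 1 (explicit derivative and its sign).} From Lemma \ref{lem34} we have $\mu_1(\al)=-\frac{(p+\al)(Np+p\al-p-\al)}{p^2}$. Differentiating directly in $\al$ gives
\begin{equation*}
\frac{\partial \mu_1(\al)}{\partial \al}=-\frac{(Np+p\al-p-\al)+(p+\al)(p-1)}{p^2}=-\frac{p(p+2\al+N-2)-2\al}{p^2}.
\end{equation*}
A short algebraic check shows $p(p+2\al+N-2)-2\al=p(p-2)+pN+2\al(p-1)>0$ for every $p\in(1,N)$, $N\geq 2$ and $\al>0$, so $\partial_\al\mu_1(\al)<0$.

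\textbf{Step 2 (convergence of derivatives uniformly on compacts).} Since $\mu_1^{\var_n}(\al)$ is the first (simple) eigenvalue of the Sturm--Liouville problem \eqref{323}, standard perturbation theory shows that $\al\mapsto\mu_1^{\var_n}(\al)$ is $C^1$. Testing \eqref{323} against $w_n$ yields the Hellmann--Feynman identity
\begin{equation*}
\frac{\partial \mu_1^{\var_n}(\al)}{\partial \al}=\frac{\displaystyle\int_0^{1/\var_n}\frac{\partial}{\partial \al}\!\lr r^{N-1}|U'_\al|^{p-2}\rr|w_n'|^2\md r-\int_0^{1/\var_n}\frac{\partial Q_\al}{\partial \al}(r)\,w_n^2\md r}{\displaystyle\int_0^{1/\var_n}r^{N-3}|U'_\al|^{p-2}w_n^2\md r}-\mu_1^{\var_n}(\al)\cdot\frac{\displaystyle\int_0^{1/\var_n}\frac{\partial}{\partial \al}\!\lr r^{N-3}|U'_\al|^{p-2}\rr w_n^2\md r}{\displaystyle\int_0^{1/\var_n}r^{N-3}|U'_\al|^{p-2}w_n^2\md r},
\end{equation*}
where $Q_\al(r)$ denotes the zero-order coefficient in \eqref{310}. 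An analogous identity holds for $\mu_1(\al)$ with $w$ in place of $w_n$ and $1/\var_n$ replaced by $+\infty$. The uniform decay estimates \eqref{321} for $w_n$ and the analogous bounds for $w$, combined with the explicit algebraic form of $U_\al$ and its $\al$-derivatives, give an integrable dominating function on $[0,+\infty)$ that is continuous in $\al$ on compact subsets $K\subset(0,+\infty)$. Since $w_n\to w$ uniformly on compacts of $[0,+\infty)$ by \eqref{322}, dominated convergence upgrades to uniform convergence in $\al\in K$, and we conclude $\partial_\al\mu_1^{\var_n}(\al)\to\partial_\al\mu_1(\al)$ uniformly on $K$.

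\textbf{Step 3 (existence, uniqueness, and convergence of $\al_k^n$).} Fix $k\geq 1$ and set $\Lambda_k:=-\la_k=-k(N+k-2)$. By Step 1, $\mu_1(\al(k))=\Lambda_k$: one verifies directly that the defining relation of $\al(k)$, namely $(p-1)\al^2+p(N+p-2)\al+p^2(1-k)(k+N-1)=0$, is equivalent to $\mu_1(\al(k))+\la_k=0$ after expanding $(p+\al)(Np+p\al-p-\al)$. Now fix a compact interval $K$ containing $\al(k)$ in its interior. By Step 2 there exists $n_0$ such that for $n\geq n_0$ one has $\partial_\al\mu_1^{\var_n}(\al)<\tfrac{1}{2}\partial_\al\mu_1(\al)<0$ uniformly on $K$, so $\al\mapsto\mu_1^{\var_n}(\al)$ is strictly decreasing on $K$. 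Moreover, by Lemma \ref{lem34} the values $\mu_1^{\var_n}(\al)$ converge uniformly on $K$ to $\mu_1(\al)$, hence for $n$ large the equation $\mu_1^{\var_n}(\al)+\la_k=0$ has exactly one solution $\al_k^n\in K$ and $\al_k^n\to\al(k)$ (a standard implicit function theorem argument applied to $F(n,\al):=\mu_1^{\var_n}(\al)+\la_k$, using the uniform lower bound on $|\partial_\al F|$). Global uniqueness of $\al_k^n$ on $(0,+\infty)$ for $n$ large follows because the strict monotonicity $\partial_\al\mu_1^{\var_n}(\al)<0$ holds on any compact $K'\subset(0,+\infty)$ by the same Step 2 argument, and $\mu_1^{\var_n}(\al)$ tends to the monotone limit $\mu_1(\al)\to-\infty$ as $\al\to+\infty$ while remaining continuous in $\al$.

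\textbf{Main obstacle.} The most delicate point is justifying the passage to the limit in the Hellmann--Feynman identity uniformly in $\al$ on compact sets: one must produce an $\al$-uniform integrable majorant on $[0,1/\var_n]$ for the integrands containing $\partial_\al(r^{N-1}|U'_\al|^{p-2})$ and $\partial_\al Q_\al$, which requires combining the optimal decay \eqref{321} with the explicit $\al$-dependence of $U_\al$; the tail near $r=1/\var_n$, estimated through \eqref{321}, must be shown to vanish uniformly in $\al\in K$.
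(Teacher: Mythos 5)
Your proof is correct and follows the same three-stage structure as the paper: explicit computation of $\partial_\al\mu_1(\al)$ from the formula in Lemma~\ref{lem34}, convergence of $\partial_\al\mu_1^{\var_n}(\al)$ to $\partial_\al\mu_1(\al)$ obtained by differentiating the eigenvalue equation in $\al$ and passing to the limit using the decay estimates \eqref{321}--\eqref{322}, and then monotonicity to locate and converge the roots $\al_k^n$. One cosmetic difference: you write the derivative formula as a Hellmann--Feynman identity for the generalized eigenvalue problem $A(\al)w=\mu B(\al)w$, which produces terms $\partial_\al\bigl(r^{N-1}|U_\al'|^{p-2}\bigr)|w_n'|^2$ and $\mu\,\partial_\al\bigl(r^{N-3}|U_\al'|^{p-2}\bigr)w_n^2$, whereas the paper differentiates the \emph{non-}self-adjoint scalar ODE \eqref{323}, multiplies both the original and the $\al$-differentiated equations by the fixed self-adjoint weight $r^{N-1}|U_\al'|^{p-2}$, and subtracts, producing instead a cross term $\int w_nw_n'\,r^{N-2}|U_\al'|^{p-2}\,\partial_\al\bigl(\tfrac{(p-2)(N+\al)}{1+r^{(p+\al)/(p-1)}}\bigr)$ (identity \eqref{333e}). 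The two identities are equivalent after an integration by parts and use of the equation, and both pass to the limit in the same way using the uniform decay in \eqref{321} and the uniform convergence \eqref{322}. Where you actually add something is in Step~3: the paper's justification of existence and uniqueness of $\al_k^n$ is a one-line appeal to $\partial_\al\mu_1^{\var_n}<0$ and $\mu_1^{\var_n}\to\mu_1$, while you spell out the local implicit-function argument near $\al(k)$ and flag that global uniqueness on $(0,+\infty)$ requires an additional argument (monotonicity on each compact for $n$ large plus control of $\mu_1^{\var_n}$ at infinity), which is a cleaner and more careful treatment of the point the paper leaves implicit. Your identified ``main obstacle'' is precisely the issue the paper resolves via \eqref{321} and the explicit polynomial form of $U_\al$ in $\al$.
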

\begin{proof}
From known results (c.f. e.g. \cite{GGN}), we get that, if $w_n$ is the first eigenfunction of \eqref{323} associated with $\mu_1^{\var_n}(\al)$, then $\frac{\partial w_n}{\partial\al}$ and $\frac{\partial \mu_1^{\var_n}(\al)}{\partial\al}$ are continuous functions. By taking the derivative in \eqref{323} with respect to $\al$, we get that
	\begin{equation}\label{332}
	\left\{
	\begin{aligned}
		&\quad-(p-1)\lr\frac{\partial w_n}{\partial\al}\rr''-\frac{1}{r}\lr\frac{\partial w_n}{\partial\al}\rr'\lr(N-1)+
	\frac{(p-2)(N+\al)}{1+r^{\frac{p+\al}{p-1}}}\rr\\&
-\frac{w'_n}{r}\frac{\partial}{\partial \al}\lr
	\frac{(p-2)(N+\al)}{1+r^{\frac{p+\al}{p-1}}}\rr
-\frac{(N+\al)(Np+p\al-N+p)}{p-1}\frac{r^{\frac{p+\al}{p-1}-2}}{(1+r^{\frac{p+\al}{p-1}})^2}\frac{\partial w_n}{\partial\al}\\
		&-\frac{\partial}{\partial \al}\lr\frac{(N+\al)(Np+p\al-N+p)}{p-1}
		\frac{r^{\frac{p+\al}{p-1}-2}}{(1+r^{\frac{p+\al}{p-1}})^2}\rr w_n\\
		&=\frac{\partial \mu_1^{\var_n}(\al)}{\partial \al}\frac{w_n}{r^2}+\mu_1^{\var_n}(\al)\frac{\frac{\partial w_n}{\partial \al}}{r^2},
	 \qquad  r\in\lr0,\frac1{\var_n}\rr,\\
		&\lr\frac{\partial w_n}{\partial \al}\rr\lr\frac1{\var_n}\rr=0.
	\end{aligned}
	\right.
\end{equation}
	Multiplying \eqref{323} by $\frac{\partial w_n}{\partial \al}$ and \eqref{332} by $w_n$, integrating and subtracting, we obtain
	\begin{align}\label{333e}
&\quad	-\int_0^{\frac1{\var_n}}\frac{\partial}{\partial \al}\lr\frac{(N+\al)(Np+p\al-N+p)}{p-1}
		\frac{r^{\frac{p+\al}{p-1}-2}}{(1+r^{\frac{p+\al}{p-1}})^2}\rr w_n^2r^{N-1}|U'_\al|^{p-2}\md r\\
&-\int_0^{\frac1{\var_n}}\frac{\partial}{\partial \al}\lr
	\frac{(p-2)(N+\al)}{1+r^{\frac{p+\al}{p-1}}}\rr w_n w'_nr^{N-2}|U'_\al|^{p-2}\md r=\frac{\partial \mu_1^{\var_n}(\al)}{\partial \al}\int_0^{\frac1{\var_n}}w_n^2r^{N-3}|U'_\al|^{p-2}
\md r.
\nonumber
	\end{align}
By Lemma \ref{lem34}, we can pass to the limit in \eqref{333e} and derive that, as $n\rightarrow+\infty$,
\begin{align}\label{334e}
	\frac{\partial \mu_1^{\var_n}(\al)}{\partial \al}\to&-\frac{\int_0^{+\infty}\frac{\partial}{\partial \al}\lr\frac{(N+\al)(Np+p\al-N+p)}{p-1}
		\frac{r^{\frac{p+\al}{p-1}-2}}{(1+r^{\frac{p+\al}{p-1}})^2}\rr w^2r^{N-1}|U'_\al|^{p-2}\md r}{\int_0^{+\infty}w^2r^{N-3}|U'_\al|^{p-2}
\md r}\\
&\quad-\frac{\int_0^{+\infty}\frac{\partial}{\partial \al}\lr
	\frac{(p-2)(N+\al)}{1+r^{\frac{p+\al}{p-1}}}\rr w w'r^{N-2}|U'_\al|^{p-2}\md r}{\int_0^{+\infty}w^2r^{N-3}|U'_\al|^{p-2}
\md r}\nonumber\\
&=\frac{\partial \mu_1(\al)}{\partial \al}=-\frac{p(p+2\al+N-2)-2\al}{p^2}\nonumber
	\end{align}
uniformly on compact sets of $[0,+\infty)$, which proves \eqref{329}.

Finally, due to $\frac{\partial \mu_1^{\var_n}(\al)}{\partial \al}<0$ and $\mu_1^{\var_n}(\al)\to\mu_1(\al)$, we get that, the equation \eqref{330} has exactly one root $\al=\al_k^n$ for any $k\geq1$. Since $\mu_1(\al)=-\frac{(p+\al)(Np+p\al-p-\al)}{p^2}$ and $\al=\al(k)$ solves the equation
\begin{align}\label{335e}
	\frac{(p+\al)(Np+p\al-p-\al)}{p^2}=\la_k=k(N+k-2),
\end{align}	
thus we have $\al(k)=\lim\limits_{n\to+\infty}\al_k^n$. This finishes our proof.
\end{proof}

\begin{rem}\label{morse-index}
From identities \eqref{39}, \eqref{311} and \eqref{330}, the solution $u_{n,\alpha}$ to problem \eqref{31} with parameter $\varepsilon_n$ is degenerate if and only if $\alpha = \alpha_k^n$ for $k = 1,2,\cdots$.  Furthermore, at each points $\alpha_k^n$, the Morse index of $u_{n,\alpha_k^n}$ changes.
	
Specifically, across the interval $(\alpha_k^n - \delta, \alpha_k^n + \delta)$ with $\delta > 0$ sufficiently small, the Morse index of $u_{n,\alpha}$ increases by precisely $\operatorname{Ker}(\Delta_{\mathbb{S}^{N-1}} + \lambda_k)$.
	
\end{rem}

\subsection{The bifurcation result in the ball}

In this section, using some ideas in \cite{GGPS}, we prove bifurcation result of \eqref{31} in the ball. To this end, we need some notations. As mentioned before, we use $u_{n,\al}$ to represent the radial solution of \eqref{31} corresponding to the exponent $\al$, for $\var=\var_n$, $B_{\frac{1}{\var_n}}$ to represent a ball centered at the origin with radius $\frac1{\var_n}$. Define
\begin{align}\label{336}
\mathcal{K}(n):=\Big\{&(\al,u_{n,\al})\in(0,+\infty)\times C_0^{1,\eta}(\overline B_{\frac{1}{\var_n}}) \ \text{such\  that}\ u_{n,\al} \\ &\text{is\ the\ radial\ solution\ of}\ \eqref{31} \ \text{defined\ in}\ \eqref{32}\Big\}\nonumber.
\end{align}
For the given curve $\mathcal{K}(n)$, we call a point $(\al_j,u_{n,\al_j})\in\mathcal{K}(n)$ is a nonradial bifurcation point, if every neighborhood of $(\al_j,u_{n,\al_j})$ in $(0,+\infty)\times C_0^{1,\eta}(\overline B_{\frac{1}{\var_n}})$, there exists a point $(\al,v_{n,\al})$ which is the nonradial solution of \eqref{31}.

Define the subspace $\mathcal{Z}_n$ of $C_0^{1,\eta}(\overline B_{\frac{1}{\var_n}})$ as follows
\begin{align}\label{337}
\mathcal{Z}_n=\Big\{&h\in C_0^{1,\eta}(\overline B_{\frac{1}{\var_n}}) \ \text{such\  that}\ h(x_1,\cdots,x_N)=h\lr g(x_1,\cdots,x_{N-1}),x_N\rr \\ &\text{for\ any}\ g\in\mathcal{O}(N-1)\Big\}\nonumber,
\end{align}
where $\mathcal{O}(N-1)$ is the orthogonal group in $\R^{N-1}$.

Now we need the following Theorem.

\begin{thm}[\textbf{\cite{AM}}]
\label{the36}
Let \(T \in C^1(D, X)\) be compact and such that \(1\) is not a characteristic value of \(T'(x_0)\) (i.e., $S'(x_0)$ is invertible) for some \(x_0 \in D\). Then, setting \(S(x) = x - T(x)\) and \(S(x_0) = p\), one has that \(x_0\) is an isolated solution of \(S(x) = p\) and there holds
	\[
	i(S, x_0) = (-1)^{\beta},
	\]
	where $i(S, x_0)$ is the Morse index and \(\beta\) is the sum of the algebraic multiplicities of all the characteristic values of \(T'(x_0)\) contained in \((0, 1)\).
\end{thm}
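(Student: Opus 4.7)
The plan is to invoke Leray-Schauder degree theory and reduce everything to a linear spectral computation. First, one checks that $x_0$ is an isolated zero of $S - p$: since $T'(x_0)$ is compact and $I - T'(x_0)$ is invertible (because $1$ is not a characteristic value), $S$ is a local $C^1$-diffeomorphism near $x_0$ by the inverse function theorem for compact perturbations of the identity. This legitimizes the definition $i(S, x_0) := \deg_{LS}(S - p, B_r(x_0), 0)$ for all sufficiently small $r > 0$.

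The second step is to reduce to the linearization through the straight-line homotopy $H(t, x) := x - \bigl(t(T(x_0 + x) - T(x_0)) + (1-t)T'(x_0)x\bigr)$ for $t \in [0, 1]$ and $x \in \overline{B_r(0)}$, which is an admissible compact perturbation of the identity. Using $S(x_0 + x) - p = x - (T(x_0+x) - T(x_0))$, the non-vanishing $H(t, x) \neq 0$ for $\|x\| = r$ and all $t$ follows from the Taylor expansion $T(x_0 + x) - T(x_0) = T'(x_0)x + o(\|x\|)$ together with the invertibility bound $\|(I - T'(x_0))x\| \geq c\|x\|$, valid once $r$ is small enough. Homotopy invariance then gives $i(S, x_0) = \deg_{LS}(I - T'(x_0), B_r(0), 0)$.

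The main obstacle is the computation of this linear degree, which is where the spectral data enters. By Riesz-Schauder theory, the nonzero spectrum of the compact operator $T'(x_0)$ is discrete with $0$ the only possible accumulation point. Let $V$ denote the (finite-dimensional) generalized eigenspace associated with the real eigenvalues strictly greater than $1$, i.e., with the characteristic values in $(0, 1)$; by hypothesis $\dim V = \beta$. Let $W$ be the complementary closed invariant subspace, on which $\sigma(T'(x_0)|_W) \cap [1, +\infty) = \emptyset$ (no real eigenvalues $\geq 1$ survive on $W$ since $1$ itself is excluded by assumption). The linear homotopy $K_t := (1-t)\, T'(x_0)|_W$, $t \in [0,1]$, satisfies $1 \notin \sigma(K_t)$ for every $t$, because $1 \in \sigma(K_t)$ would require $1/(1-t) \in \sigma(T'(x_0)|_W) \cap [1, +\infty)$. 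Therefore $\deg_{LS}\bigl((I - T'(x_0))|_W, B_r^W, 0\bigr) = \deg_{LS}(I_W, B_r^W, 0) = 1$.

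It remains to handle $V$. On this finite-dimensional invariant subspace, $(I - T'(x_0))|_V$ has only real eigenvalues strictly less than $0$ (counted with algebraic multiplicities summing to $\beta$), so its Brouwer degree at the origin equals $\operatorname{sgn}\det(I - T'(x_0))|_V = (-1)^\beta$. The multiplicative property of the Leray-Schauder degree on direct sums then combines the two factors to yield $i(S, x_0) = 1 \cdot (-1)^\beta = (-1)^\beta$, as claimed.
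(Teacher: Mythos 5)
The paper itself provides no proof of this statement: Theorem \ref{the36} is cited verbatim from Ambrosetti--Malchiodi \cite{AM}, so there is no in-paper argument against which to compare. Your proposal is the standard textbook proof of the Leray--Schauder index formula, and it is correct: the reduction to the linearization via a straight-line homotopy justified by the $C^1$ expansion and the coercivity bound $\|(I - T'(x_0))x\| \geq c\|x\|$; the Riesz decomposition $X = V \oplus W$ into the (finite-dimensional) sum $V$ of generalized eigenspaces for real eigenvalues $\mu > 1$ (equivalently, characteristic values in $(0,1)$) and a closed complementary invariant subspace $W$; the linear homotopy $K_t = (1-t)T'(x_0)|_W$ contracting $W$-block to the identity, admissible because $1/(1-t) \in [1,+\infty)$ never meets $\sigma(T'(x_0)|_W)$; and the Brouwer-degree computation $\operatorname{sgn}\det(I - T'(x_0))|_V = (-1)^\beta$ since all eigenvalues of $(I-T'(x_0))|_V$ are of the form $1-\mu < 0$ with multiplicities summing to $\beta$. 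One small point worth making explicit is that the admissibility of the first homotopy $H(t,\cdot)$ requires $T'(x_0)$ to be compact; this is not an extra hypothesis but follows from $T$ being a compact $C^1$ map, a standard (but not entirely trivial) fact that the argument implicitly uses and that should be flagged for completeness.
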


\begin{thm}\label{thm38}
For any $\al\in(0,+\infty)$, let the operator $\mathcal{L}^n(\al,h):(0,+\infty)\times\mathcal{Z}_n\to \mathcal{Z}_n$ be defined by $\mathcal{L}^n(\al,h):=(-\Delta_p)^{-1}(|x|^{\al}| h+\beta_{\al}(n)|^{p_{\al}^*-2}(h+\beta_{\al}(n)))$, then the operator $I - \mathcal{L}_{h}^n(\al,u_{n,\al}):(0,+\infty)\times\mathcal{Z}_n\to \mathcal{Z}_n$  is invertible for $\al \neq\al_k^n$, $k=1,2,\cdots$, where $\al_k^n$ are given by \eqref{331} and
\begin{align*}
	\mathcal{L}_{h}^n(\al,u_{n,\al}):=\lim_{t\to0}\frac{(-\Delta_p)^{-1}(|x|^{\al}| u_{n,\al}+th+\beta_{\al}(n)|^{p_{\al}^*-2}(u_{n,\al}+th+\beta_{\al}(n)))-u_{n,\al}}{t}.
\end{align*}	
\end{thm}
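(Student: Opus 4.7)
The plan is to identify $\mathrm{Ker}(I-\mathcal{L}_h^n(\al,u_{n,\al}))$ with the space of $\mathcal{O}(N-1)$-invariant solutions of the linearized equation \eqref{38} at $u_{n,\al}$, and then kill that kernel using the spectral analysis carried out in Subsection 3.1 together with the Fredholm alternative. The proof decouples into (a) showing $\mathcal{L}^n$ is a compact, Fr\'echet-differentiable self-map of $\mathcal{Z}_n$, and (b) verifying that the only solution of \eqref{38} inside $\mathcal{Z}_n$ is zero whenever $\al\ne\al_k^n$.

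First I would check that $\mathcal{L}^n$ maps $\mathcal{Z}_n$ into itself: since $|x|^{\al}$, the pointwise nonlinearity $|h+\beta_\al(n)|^{p_\al^*-2}(h+\beta_\al(n))$, the operator $-\Delta_p$, and the ball $B_{1/\var_n}(0)$ are all invariant under the natural action of $\mathcal{O}(N-1)$ on the first $N-1$ coordinates, uniqueness for the Dirichlet problem for $-\Delta_p$ with bounded right-hand side forces $\mathcal{L}^n(\al,h)\in\mathcal{Z}_n$ whenever $h\in\mathcal{Z}_n$. Compactness of $\mathcal{L}^n$ as a map into $C_0^{1,\eta}(\overline{B_{1/\var_n}})$ follows from the Tolksdorf--DiBenedetto $C^{1,\eta}$ regularity for the $p$-Laplacian on bounded domains combined with the compact embedding $C^{1,\eta}\hookrightarrow C^{1,\eta'}$ for $0<\eta'<\eta$. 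Because $u_{n,\al}+\beta_\al(n)=U_\al$ on $B_{1/\var_n}(0)$ (so in particular $\nabla u_{n,\al}=\nabla U_\al$ inside the ball), differentiating $\mathcal{L}^n$ at $u_{n,\al}$ via the implicit function theorem applied to the $D^{1,p}$-weak formulation yields $\mathcal{L}_h^n(\al,u_{n,\al})[\phi]=v$, where $v\in\mathcal{Z}_n$ is the unique weak solution of
\begin{align*}
&-\mathrm{div}(|\nabla U_\al|^{p-2}\nabla v)-(p-2)\mathrm{div}\lr|\nabla U_\al|^{p-4}(\nabla U_\al\cdot\nabla v)\nabla U_\al\rr\\
&\qquad=(p_\al^*-1)|x|^{\al}U_\al^{p_\al^*-2}\phi\,\,\text{in}\,\,B_{1/\var_n}(0),\qquad v=0\,\,\text{on}\,\,\p B_{1/\var_n}(0).
\end{align*}
Consequently $\phi\in\mathrm{Ker}(I-\mathcal{L}_h^n(\al,u_{n,\al}))$ if and only if $\phi\in\mathcal{Z}_n$ is a weak solution of \eqref{38}.

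By compactness of $\mathcal{L}_h^n(\al,u_{n,\al})$, the Fredholm alternative reduces invertibility of $I-\mathcal{L}_h^n(\al,u_{n,\al})$ to triviality of this kernel. Expanding $\phi(r,\theta)=\sum_{k\ge 0}\phi_k(r)\Phi_k(\theta)$ in spherical harmonics, each radial profile $\phi_k$ satisfies the ODE \eqref{39}; the $\mathcal{O}(N-1)$-invariance of $\phi$ merely restricts which spherical harmonics $\Phi_k$ may contribute, without relaxing the ODE constraint. For $k=0$, Lemma \ref{lem31} forces $\phi_0\equiv 0$. For $k\ge 1$, a nontrivial $\phi_k$ requires $\mu_i^{\var_n}(\al)=-\la_k<0$ for some $i\ge 1$; Lemma \ref{lem33} excludes $i\ge 2$ because $\mu_2^{\var_n}(\al)>0$, and Proposition \ref{pp35} identifies the unique admissible parameter as $\al=\al_k^n$. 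Thus whenever $\al\ne\al_k^n$ for every $k=1,2,\cdots$, the kernel is trivial and $I-\mathcal{L}_h^n(\al,u_{n,\al})$ is invertible.

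The principal technical obstacle lies in the Fr\'echet differentiability and compactness of $\mathcal{L}^n$ on the non-Hilbertian space $\mathcal{Z}_n$, because the linearization of the inverse $(-\Delta_p)^{-1}$ carries the weight $|\nabla U_\al|^{p-2}$, which is degenerate (for $p>2$) or singular (for $p<2$) at the critical point $\{\nabla U_\al=0\}=\{0\}$. This difficulty is absorbed by the quantitative $C^{1,\eta}$ bounds of Tolksdorf--DiBenedetto on bounded domains and by the weighted-ODE framework already used in Subsection 3.1 to diagonalize the radial part of the linearization.
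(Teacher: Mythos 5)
Your proposal is correct and follows essentially the same route as the paper: both reduce invertibility of $I-\mathcal{L}_h^n(\al,u_{n,\al})$ to triviality of the kernel (via the Fredholm alternative for $I$ minus a compact operator), identify that kernel with $\mathcal{O}(N-1)$-invariant weak solutions of the linearized problem \eqref{38}, and then kill it using the spherical-harmonic/ODE spectral analysis of Subsection 3.1 (Lemma \ref{lem31}, Lemma \ref{lem33}, Proposition \ref{pp35}, summarized in Remark \ref{morse-index}) for $\al\ne\al_k^n$. The only cosmetic difference is that you obtain the linearization from the implicit function theorem exploiting $u_{n,\al}+\beta_\al(n)=U_\al$ inside the ball, whereas the paper computes the $t\to0$ difference quotient directly in the $D^{1,p}$-weak formulation; the resulting linearized operator and the conclusion are identical.
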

\begin{proof}
Let $v\in \text{Ker}\lr I - \mathcal{L}_{h}^n(\al,u_{n,\al})\rr$, i.e.,
\begin{align*}
	\mathcal{L}_{v}^n(\al,u_{n,\al}):=\lim_{t\to0}\frac{(-\Delta_p)^{-1}(|x|^{\al}| u_{n,\al}+tv+\beta_{\al}(n)|^{p_{\al}^*-2}(u_{n,\al}+tv+\beta_{\al}(n)))-u_{n,\al}}{t}=v.
\end{align*}	
Thus, for any test function $\varphi\in C_c^\infty(B_{\frac1{\var_n}})$, we have
\begin{align}\label{339q}
	&\quad\int_{B_{\frac1{\var_n}}}\lr|\nabla u_{n,\al}+t\nabla v+o(t)\nabla w|^{p-2}(\nabla u_{n,\al}+t\nabla v+o(t)\nabla w)-|\nabla u_{n,\al}|^{p-2}\nabla u_{n,\al}\rr\nabla \varphi\md x\\
	&=\int_{B_{\frac1{\var_n}}}|x|^{\al}\lr(u_{n,\al}+tv+\beta_{\al}(n))^{p_{\al}^*-1}-
	(u_{n,\al}+\beta_{\al}(n))^{p_{\al}^*-1}\rr\varphi\md x\nonumber,
\end{align}
where $w\in X$. On the one hand, from the LHS of the equation \eqref{339q}, we get
\begin{align}\label{340q}
	&\quad\int_{B_{\frac1{\var_n}}}\lr|\nabla u_{n,\al}+t\nabla v+o(t)\nabla w|^{p-2}(\nabla u_{n,\al}+o(t)\nabla w)-|\nabla u_n|^{p-2}\nabla u_{n,\al}\rr\nabla \varphi\md x\\
&=\int_{B_{\frac1{\var_n}}}\int_0^1(p-2)|\nabla u_{n,\al}+st\nabla v+o(t)s\nabla w|^{p-4}\lr\lr\nabla u_{n,\al}+st\nabla v+o(t)s\nabla w\rr\cdot\nabla \varphi\rr\nonumber\\
&\quad\times\lr\lr\nabla u_{n,\al}+st\nabla v+o(t)s\nabla w\rr\cdot\lr t\nabla v+o(t)w\rr\rr\nonumber\\
&\quad+|\nabla u_{n,\al}+st\nabla v+o(t)s\nabla w|^{p-2}\lr t\nabla v+o(t)w\rr\cdot\nabla \varphi\md s\md x.\nonumber
\end{align}
On the other hand, from the RHS of the equation \eqref{339q}, we have
\begin{align}\label{341q}
	&\quad\int_{B_{\frac1{\var_n}}}|x|^{\al}\lr(u_{n,\al}+tv+\beta_\al(n))^{p_{\al}^*-1}-
	(u_{n,\al}+\beta_\al(n))^{p_{\al}^*-1}\rr\varphi\md x\\
	&=t(p_{\al}^*-1)\int_{B_{\frac1{\var_n}}}|x|^{\al}\lr u_{n,\al}+\theta tv+\beta_\al(n)\rr^{p_{\al_n}^*-2}v\varphi\md x,\nonumber
\end{align}
where $\theta\in(0,1)$. Combining \eqref{340q}, \eqref{341q}  with \eqref{339q}, and letting $t\to0$, we get
\begin{align}\label{342qq}
&\quad\int_{B_{\frac1{\var_n}}}\lr|\nabla u_{n,\al}|^{p-2}\nabla v\cdot\nabla\varphi+(p-2)
|\nabla u_{n,\al}|^{p-4}(\nabla u_{n,\al}\cdot\nabla v)\nabla u_{n,\al}\cdot\nabla\varphi\rr\md x\\
&= (p_\al^*-1)\int_{B_{\frac1{\var_n}}}|x|^{\alpha}(u_{n,\al}+\beta_\al(n))^{p_\al^*-2}v\varphi\md x,\qquad\forall\ \varphi\in C_c^\infty(B_{\frac1{\var_n}}),\nonumber
\end{align}
that is, $v$ weakly solves \eqref{38}, which is the linearized equation of \eqref{31} around $u_{n,\al}$. By Lemma \ref{lem33} and Remark \ref{morse-index}, we know that, if $\alpha \ne\alpha_k^n$ for $k = 1,2,\dotsc$, then $u_{n,\al}$ is non-degenerate and hence $v=0$. Therefore, $\text{Ker}\lr I - \mathcal{L}_{h}^n(\al,u_{n,\al})\rr=\{0\}$, i.e., $I - \mathcal{L}_{h}^n(\al,u_{n,\al})$ is invertible when $\al \neq\al_k^n$, for $k = 1,2,\cdots$. This completes our proof.
\end{proof}

For the proof of the invertibility of the operator $I - \mathcal{L}_{h}^n(\al,u_{n,\al})$, we can also c.f. \cite{DJF}.

\begin{thm}\label{th36}
Fix $n\in \N$ and let $\al_k^n$ be given by \eqref{330}. Then the points $(\al_k^n,u_{n,\al_k^n})$ are nonradial bifurcation points of the curve $\mathcal{K}(n)$. Furthermore, these nonradial solutions bifurcating from $(\al_k^n,u_{n,\al_k^n})$ lie in the space $\mathcal{Z}_n$ and hence are $\mathcal{O}(N - 1)$-invariant.
\end{thm}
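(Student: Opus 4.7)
The plan is to cast \eqref{31} as the fixed-point equation $h=\mathcal{L}^n(\al,h)$, restrict to the $\mathcal{O}(N-1)$-invariant subspace $\mathcal{Z}_n$, and apply the Krasnoselskii--Rabinowitz bifurcation principle by showing, via Theorem \ref{the36}, that the local Leray--Schauder index of the trivial branch $\al\mapsto u_{n,\al}$ jumps as $\al$ crosses $\al_k^n$. First one checks that $\mathcal{L}^n(\al,\cdot)\colon \mathcal{Z}_n\to\mathcal{Z}_n$ is well-defined (the right-hand side $|x|^{\al}|h+\beta_\al(n)|^{p_\al^*-2}(h+\beta_\al(n))$ is $\mathcal{O}(N-1)$-invariant whenever $h$ is, and the inverse $p$-Laplacian with Dirichlet data on $B_{1/\var_n}$ preserves this symmetry), is compact (standard $C^{1,\eta}$ regularity for the $p$-Laplace equation on the ball), and is $C^1$ in $h$ with continuous dependence on $\al$.

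Next, one analyzes the kernel of $I-\mathcal{L}_h^n(\al_k^n,u_{n,\al_k^n})$ restricted to $\mathcal{Z}_n$. By the spherical-harmonic decomposition \eqref{38}--\eqref{311} and Remark \ref{morse-index}, any kernel element in the full space $C_0^{1,\eta}(\overline{B_{1/\var_n}})$ is a linear combination of functions $v_k(r)\Phi_{k,i}(\theta)$ with $\Phi_{k,i}\in\boldsymbol{\Phi}_k(\R^N)$; the radial contribution is excluded by Lemma \ref{lem31}. Imposing $\mathcal{O}(N-1)$-invariance reduces the admissible harmonics to the $\mathcal{O}(N-1)$-invariant subspace of $\boldsymbol{\Phi}_k(\R^N)$, which is exactly one-dimensional (spanned by the Gegenbauer polynomial in $\cos\theta=x_N/|x|$). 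Hence the kernel in $\mathcal{Z}_n$ has dimension $1$, and by Theorem \ref{thm38} the linearized operator $I-\mathcal{L}_h^n(\al,u_{n,\al})$ on $\mathcal{Z}_n$ is invertible for all $\al\ne\al_k^n$.

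Applying Theorem \ref{the36} to $S(\al,\cdot)=I-\mathcal{L}^n(\al,\cdot)$ on $\mathcal{Z}_n$ for $\al$ slightly below and slightly above $\al_k^n$, the index equals $(-1)^{\beta(\al)}$, where $\beta(\al)$ is the total algebraic multiplicity of characteristic values of $\mathcal{L}_h^n(\al,u_{n,\al})$ in $(0,1)$. The strict monotonicity $\partial_\al \mu_1^{\var_n}(\al)<0$ from Proposition \ref{pp35}, together with the uniqueness of the root $\al_k^n$ of $-\la_k=\mu_1^{\var_n}(\al)$, guarantees that exactly one characteristic value crosses $1$ as $\al$ traverses $\al_k^n$, with algebraic multiplicity equal to the kernel dimension, namely $1$. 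Consequently $\beta$ changes by an odd number, the local index changes sign, and the Krasnoselskii--Rabinowitz theorem produces a continuum of nontrivial solutions of $h=\mathcal{L}^n(\al,h)$ in $\mathcal{Z}_n$ emanating from $(\al_k^n,u_{n,\al_k^n})$; these lie in $\mathcal{Z}_n$ by construction and are nonradial because the kernel direction $v_k(r)\Phi_{k,1}(\theta)$ depends non-trivially on $\theta$ for $k\ge 1$. The main obstacle is the odd-multiplicity check, which is precisely what the reduction to the one-dimensional $\mathcal{O}(N-1)$-invariant harmonic subspace settles; the other ingredients (compactness/smoothness of $\mathcal{L}^n$, invariance of $\mathcal{Z}_n$, and monotonicity of the first eigenvalue) are already in hand from the earlier sections.
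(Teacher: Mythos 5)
Your proposal is correct and takes essentially the same route as the paper: both hinge on the one-dimensionality of the $\mathcal{O}(N-1)$-invariant $\lambda_k$-eigenspace (the paper cites Smoller--Wasserman; you identify it via Gegenbauer polynomials, which is the same fact), combine it with Theorems \ref{thm38} and \ref{the36} to obtain a parity jump of the Leray--Schauder index across $\alpha_k^n$, and conclude bifurcation in $\mathcal{Z}_n$. The only presentational difference is that you invoke the Krasnoselskii--Rabinowitz principle as a black box, while the paper writes out the same degree-jump argument explicitly as a contradiction via homotopy invariance.
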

\begin{proof}
From Smoller and Wasserman \cite{SW1}, we know that, for any $k\geq1$, the eigenspace $V_k$ of the Laplace-Beltrami operator on $\mms^{N-1}$, which is spanned by $\mathcal{O}(N-1)$ invariant eigenfunctions corresponding to the eigenvalue $\la_k$, is one-dimensional. This implies that
\begin{align}\label{338t}
m(\al_k^n+\delta)-m(\al_k^n-\delta)=1,
\end{align}
if $\delta>0$ is sufficiently small, where $m(\al)$ is the Morse index of the radial solution $u_{n,\al}$ in the space $\mathcal{Z}_n$.

Consider the operator $\mathcal{L}^n(\al,h):(0,+\infty)\times\mathcal{Z}_n\to \mathcal{Z}_n$, where $\mathcal{L}^n(\al,h):=(-\Delta_p)^{-1}(|x|^{\al}| h+\beta_{\al}(n)|^{p_{\al}^*-2}(h+\beta_{\al}(n)))$ and $\displaystyle \beta_\al(n):=U_\al\lr\frac{1}{\var_n}\rr=\frac{C_{N,p,\alpha}\var_n^{\frac{N-p}{p-1}}}
			{(1+\var_n^{\frac{p+\alpha}{p-1}
				})^{\frac{N-p}{p+\al}}}$. Then $\mathcal{L}^n$ is a compact operator for every fixed $\al$ and is continuous with respect to $\al$.

Suppose on the contrary that $(\al_k^n,u_{n,\al_k^n})$ is not a nonradial bifurcation point. Let $T^n(\al,h):=h-\mathcal{L}^n(\al,h)$. From Lemma \ref{lem31}, we know that $u_{n,\al}$ is radially nondegenerate and hence is the unique radial solution to \eqref{31} for any $\al$. Thus there exists $\delta_0>0$ such that, for $\delta\in(0,\delta_0)$ and every $\rho\in(0,\delta_0)$, we have
\begin{align}\label{339t}
&T^n(\al,h)\ne0,\quad\quad \forall \,\, h\in\mathcal{Z}_n\quad\text{such\ that}\quad\|h-u_{n,\al}\|_{\mathcal{Z}_n}\leq \rho \\
&\quad
\text{and}\quad h\ne u_{n,\al},\quad \forall\al\in(\al_k^n-\delta,\al_k^n+\delta).\nonumber
\end{align}
At the same time, we can choose $\delta_0$ such that the interval $[\al_k^n-\delta,\al_k^n+\delta]$ does not contain any other points $\al_j^n$ with $j\ne k$.

Let the set $\mathcal{S}:=\{(\al,h)\in[\al_k^n-\delta,\al_k^n+\delta]\times\mathcal{Z}_n:\|h-u_{n,\al}\|_{\mathcal{Z}_n}\leq\rho\}$. Noting that $T^n(\al,\cdot)$ is a compact perturbation of the identity, we will consider the Leray-Schauder topological degree $\deg(T^n(\al,\cdot),\mathcal{S}_\al,0)$ of $T^n(\al,\cdot)$ on the set $\mathcal{S}_\al:=\{h\in\mathcal{Z}_n\ \text{such\ that}\ (\al,h)\in\mathcal{S}\}$. By \eqref{339t}, there does not exist solutions of $T^n(\al,h)=0$ on $[\al_k^n-\delta,\al_k^n+\delta]\times\partial\mathcal{S}_\al$. From the homotopic invariance of the degree, we have
\begin{align}\label{340t}
\deg(T^n(\al,\cdot),\mathcal{S}_\al,0) \quad\text{is\ constant\ on}\ [\al_k^n-\delta,\al_k^n+\delta].
\end{align}

Since from Theorem \ref{thm38}, operator $I - \mathcal{L}_{h}^n(\al,u_{n,\al}):(0,+\infty)\times\mathcal{Z}_n\to \mathcal{Z}_n$  is invertible for $\al_k^n-\delta$ and $\al_k^n+\delta$, and by Theorem \ref{the36}, we get
\begin{align*}
\deg(T^n(\al_k^n-\delta,\cdot),\mathcal{S}_{\al_k^n-\delta},0)=(-1)^{m(\al_k^n-\delta)}
\end{align*}
and
\begin{align*}
\deg(T^n(\al_k^n+\delta,\cdot),\mathcal{S}_{\al_k^n+\delta},0)=(-1)^{m(\al_k^n+\delta)}.
\end{align*}
From the choice of $\al_k^n$ and of the space $\mathcal{Z}_n$ and \eqref{338t}, we obtain
\begin{align*}
\deg(T^n(\al_k^n+\delta,\cdot),\mathcal{S}_{\al_k^n+\delta},0)=-\deg(T^n(\al_k^n-\delta,\cdot),\mathcal{S}_{\al_k^n-\delta},0),
\end{align*}
which contradicts \eqref{340t}. From Lemma \ref{lem31}, $u_{n,\al}$ is radially nondegenerate and hence is the unique radial solution to \eqref{31} for any $\al$, thus $(\al_k^n,u_{n,\al_k^n})$ is a nonradial bifurcation point and the bifurcating solutions are nonradial. This finishes our proof.
\end{proof}

\begin{thm}\label{thm39}
Let $k$ be an even integer and $\alpha_{k}^n$ be given by equation \eqref{330}. Then, there exist $\left[\frac{N}{2}\right]$ distinct nonradial solutions of \eqref{31} that bifurcate from the pair $(\alpha_{k}^n, u_{n, \alpha_{n}^k})$, where $[x]$ denotes the largest integer that is less than or equal to $x$.
	\end{thm}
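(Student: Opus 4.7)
The plan is to generalize the argument of Theorem \ref{th36} by replacing the single symmetry group $\mathcal{O}(N-1)$ by each of the product groups $\mathcal{O}(N-l) \times \mathcal{O}(l)$ for $l = 1, \ldots, [\frac{N}{2}]$, and then running the Leray-Schauder degree computation on each of the corresponding invariant subspaces separately.

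First, for each $l \in \{1, \ldots, [\frac{N}{2}]\}$ I would introduce the closed subspace
\begin{align*}
\mathcal{Z}_n^l := \Big\{h \in C_0^{1,\eta}(\overline{B_{\frac{1}{\varepsilon_n}}}) : h(gx) = h(x) \,\,\text{for every}\,\, g \in \mathcal{O}(N-l) \times \mathcal{O}(l) \Big\}.
\end{align*}
Writing $x = (x',x'') \in \mathbb{R}^{N-l} \times \mathbb{R}^l$, every element of $\mathcal{Z}_n^l$ is a function of $(|x'|, |x''|)$; since the coefficients in \eqref{31} are $\mathcal{O}(N)$-invariant, the principle of symmetric criticality ensures that critical points of the restricted energy on $\mathcal{Z}_n^l$ are genuine weak solutions of \eqref{31}. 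Moreover, the radial approximate solution $u_{n,\alpha}$ lies in $\mathcal{Z}_n^l$ for every $l$.

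Second, I would verify that for each $l$ the $(\mathcal{O}(N-l) \times \mathcal{O}(l))$-invariant subspace of the $k$-th spherical harmonic eigenspace $V_k = \mathrm{Ker}(\Delta_{\mathbb{S}^{N-1}} + \lambda_k)$ is exactly one-dimensional when $k$ is even. Indeed, an invariant homogeneous polynomial of degree $k$ necessarily has the form $P(x) = \sum_{2j \leq k} a_j |x'|^{2j} |x''|^{k-2j}$, which forces $k$ to be even and gives a $(\lfloor k/2 \rfloor + 1)$-parameter family; imposing the harmonicity constraint $\Delta P = 0$ yields a two-term recursion on the coefficients $a_j$ with a one-dimensional solution space, yielding exactly one (up to scalar) invariant spherical harmonic of degree $k$. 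Consequently, recalling Remark \ref{morse-index}, the Morse index of $u_{n,\alpha}$ computed within $\mathcal{Z}_n^l$ jumps by exactly $1$ as $\alpha$ crosses $\alpha_k^n$:
\begin{align*}
m_l(\alpha_k^n + \delta) - m_l(\alpha_k^n - \delta) = 1 \qquad \text{for every } l = 1, \ldots, \Big[\frac{N}{2}\Big].
\end{align*}

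Third, I would repeat the homotopy-degree argument from Theorem \ref{th36} inside each $\mathcal{Z}_n^l$. By Theorem \ref{thm38}, the operator $I - \mathcal{L}_h^n(\alpha, u_{n,\alpha})$ restricted to $\mathcal{Z}_n^l$ is invertible for $\alpha = \alpha_k^n \pm \delta$ with $\delta$ sufficiently small; if $(\alpha_k^n, u_{n,\alpha_k^n})$ were not a nonradial bifurcation point within $\mathcal{Z}_n^l$, the Leray-Schauder degree $\deg(T^n(\alpha, \cdot), \mathcal{S}_\alpha^l, 0)$ would be constant on $[\alpha_k^n - \delta, \alpha_k^n + \delta]$, whereas Theorem \ref{the36} combined with the jump of $m_l$ by $1$ gives opposite signs at the two endpoints, a contradiction. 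Hence a nonradial bifurcating branch emanates from $(\alpha_k^n, u_{n,\alpha_k^n})$ within $\mathcal{Z}_n^l$.

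Finally, to show that the $[\frac{N}{2}]$ branches produced in this way are pairwise distinct, suppose that a nonradial solution $v$ lies simultaneously in $\mathcal{Z}_n^l$ and $\mathcal{Z}_n^{l'}$ with $1 \leq l < l' \leq [\frac{N}{2}]$. Then $v$ is invariant under the group generated by $\mathcal{O}(N-l) \times \mathcal{O}(l)$ and $\mathcal{O}(N-l') \times \mathcal{O}(l')$, which (by a direct transitivity argument on the unit sphere) acts transitively on $\mathbb{S}^{N-1}$ and thus forces $v$ to be radial, contradicting the nonradial character of the bifurcating solutions. The main obstacles I anticipate are the precise one-dimensionality count of $V_k \cap \mathcal{Z}_n^l$ (which relies essentially on $k$ being even, since otherwise no nonzero invariant polynomial exists) and the transitivity argument used to separate the branches; both should follow from elementary representation-theoretic computations on spherical harmonics together with the Smoller-Wasserman-type analysis already invoked in the proof of Theorem \ref{th36}.
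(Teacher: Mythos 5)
Your proposal follows essentially the same route as the paper's own proof: pass to the $\mathcal{O}(l)\times\mathcal{O}(N-l)$-invariant subspaces $\mathcal{Z}_n^l$, use that for even $k$ the invariant part of $\mathrm{Ker}(\Delta_{\mathbb{S}^{N-1}}+\lambda_k)$ is one-dimensional so the Morse index in $\mathcal{Z}_n^l$ jumps by exactly one across $\alpha_k^n$, rerun the Leray--Schauder degree argument of Theorem \ref{th36} inside each $\mathcal{Z}_n^l$, and separate the $[\frac{N}{2}]$ branches by noting that a function invariant under two distinct block symmetry groups must be radial, which is impossible since the bifurcating solutions are nonradial and the radial ones are isolated. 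The only difference is cosmetic: where the paper cites Smoller--Wasserman \cite{SW2} and Willem \cite{Wi} both for the one-dimensionality of the invariant harmonics and for the fact that joint invariance forces radiality, you sketch the elementary recursion and transitivity arguments yielding the same two facts.
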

	
	\begin{proof}
Let us take the subgroups $\mathcal{G}_l$ of $\mathcal{O}(N)$ into account, which are defined as follows:
\begin{align}\label{347qq}
\mathcal{G}_l = \mathcal{O}(l) \times \mathcal{O}(N - l) \quad\,\, \text{for} \,\, 1 \leq l \leq \left[\frac{N}{2}\right].
\end{align}
From \cite{SW2} (also see \cite{Wi}), we also know that, when $k$ is an even integer, the eigenspace $V_{k,l}$ corresponding to the eigenvalue $\la_k$ of the Laplace-Beltrami operator on $\mms^{N - 1}$, which remains invariant under the action of $\mathcal{G}_l$, is one-dimensional.

First, let $\mathcal{Z}_n^l$ be the subspace of $C_0^{1,\eta}(\overline{B}_{\frac{1}{\var_n}})$ consisting of functions that are invariant under the action of $\mathcal{G}_l$. Then, similar to the expression in \eqref{338t}, we can conclude that
\[
m^l\left(\alpha_{k}^n + \delta\right) - m^l\left(\alpha_{k}^n - \delta\right) = 1
\]
holds when $\delta > 0$ is sufficiently small. Here, $m^l(\alpha)$ represents the Morse index of the radial solution $u_{n,\alpha}$ within the space $\mathcal{Z}_n^l$.

By applying the same reasoning as in the proof of previous Theorem \ref{th36}, we obtain that $(\alpha_{k}^n, u_{n,\alpha_{n}^k})$ is a bifurcation point. Moreover, the solution that bifurcates from this point remains invariant under the action of $\mathcal{G}_l$.

Furthermore, assume that we obtain a solution $h$ that remains invariant under the actions of two groups $\mathcal{G}_{l_1}$ and $\mathcal{G}_{l_2}$, where $l_1 \neq l_2$. According to the results in \cite{SW2}, $h$ have to be radial. However, this is impossible because the radial solutions $u_{n,\alpha}$ are isolated.

Consequently, we can conclude the existence of $\left[\frac{N}{2}\right]$ distinct non-radial solutions of \eqref{31} that bifurcate from the point $(\alpha_{k}^n, u_{n,\alpha_{n}^k})$.
\end{proof}

Let $\Pi_n$ be the closure in $(0,+\infty)\times \mathcal{Z}_n$ of the set of solutions of $T^n(\al,h)=0$ that are different from $u_{n,\al}$, i.e.,
\begin{align}\label{338}
\Pi_n:=\overline{\Big\{(\al,h)\in(0,+\infty)\times \mathcal{Z}_n\ \text{such\ that}\ T^n(\al,h)=0\ \text{with}\ h\ne u_{n,\al}\Big\}},
\end{align}
where $\mathcal{Z}_n$ is defined in \eqref{337} and $T^n(\al,h)$ is defined in Theorems \ref{th36} and \ref{thm39}. If $(\al^n_k,u_{n,\al^n_k})\in\mathcal{K}(n)$ is a
nonradial bifurcation point, then $(\al^n_k,u_{n,\al^n_k})\in \Pi_n$.

We call $\mathcal{C}(\al_k^n)\subset\Pi_n$ the closed connected component of $\Pi_n$ which
contains $(\al^n_k,u_{n,\al^n_k})$ and it is maximal with respect to the inclusion.

\begin{prop}\label{th310}
Let $\al_k^n$ be defined by equation \eqref{330}. If $(\al,h_\al)\in\mathcal{C}(\al_k^n)$, then $h_\al$ is a solution of \eqref{31} corresponding to $\var_n$, in particular, $h_\al>0$ in $B_{\frac{1}{\var_n}}$.
\end{prop}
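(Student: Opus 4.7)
The plan is to run a connectedness argument on $\mathcal{C}(\al_k^n)$. Define
$$\mathcal{C}^+:=\bigl\{(\al,h_\al)\in\mathcal{C}(\al_k^n):\ h_\al+\beta_\al(n)>0\ \text{in}\ \overline{B_{\frac{1}{\var_n}}}\bigr\}.$$
I will show $\mathcal{C}^+$ is nonempty, open, and closed in the connected set $\mathcal{C}(\al_k^n)$, so that $\mathcal{C}^+=\mathcal{C}(\al_k^n)$. Once strict positivity of $h_\al+\beta_\al(n)$ is secured, the equation $T^n(\al,h_\al)=0$ loses its absolute values and becomes $-\Delta_p h_\al=|x|^\al(h_\al+\beta_\al(n))^{p_\al^*-1}\geq 0$ in $B_{\frac{1}{\var_n}}$ with $h_\al=0$ on $\partial B_{\frac{1}{\var_n}}$; the weak comparison principle then yields $h_\al\geq 0$, and V\'azquez's strong maximum principle for the $p$-Laplacian upgrades this to $h_\al>0$ in $B_{\frac{1}{\var_n}}$, which gives the proposition.

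Nonemptiness holds at the bifurcation point: by the definition \eqref{32} of $u_{n,\al_k^n}$, one has $u_{n,\al_k^n}(x)+\beta_{\al_k^n}(n)=U_{\al_k^n}(x)$, which is strictly positive on $\overline{B_{\frac{1}{\var_n}}}$ by the explicit expression \eqref{14}. For openness, I use the continuous embedding $\mathcal{Z}_n\hookrightarrow C^0(\overline{B_{\frac{1}{\var_n}}})$ and the continuity of $\al\mapsto\beta_\al(n)$: a strict positive lower bound $h_\al+\beta_\al(n)\geq c_0>0$ propagates to nearby pairs $(\al',h_{\al'})\in\mathcal{C}(\al_k^n)$ with $|\al'-\al|$ and $\|h_{\al'}-h_\al\|_{\mathcal{Z}_n}$ sufficiently small.

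The closedness step is the main technical obstacle. Suppose $(\al_j,h_j)\in\mathcal{C}^+$ with $(\al_j,h_j)\to(\al,h)$ in $(0,+\infty)\times\mathcal{Z}_n$. Pointwise passage to the limit gives $v:=h+\beta_\al(n)\geq 0$ on $\overline{B_{\frac{1}{\var_n}}}$, and on the boundary $v=\beta_\al(n)>0$ because $\beta_\al(n)>0$ is continuous in $\al$. Passing to the limit in the weak formulation of $T^n(\al_j,h_j)=0$ (justified by the $C^{1,\eta}$ convergence supplied by the definition of $\mathcal{Z}_n$ together with standard regularity for the $p$-Laplacian, which controls the quasi-linear principal part) shows that $v$ weakly solves $-\Delta_p v=|x|^\al v^{p_\al^*-1}$ in $B_{\frac{1}{\var_n}}$. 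By the strong maximum principle for the $p$-Laplacian (c.f. V\'azquez, or Pucci--Serrin), either $v\equiv 0$ in $B_{\frac{1}{\var_n}}$ or $v>0$ everywhere in $B_{\frac{1}{\var_n}}$; the former is excluded by the strict boundary positivity $v|_{\partial B_{\frac{1}{\var_n}}}=\beta_\al(n)>0$, whence $v>0$ on all of $\overline{B_{\frac{1}{\var_n}}}$ and $(\al,h)\in\mathcal{C}^+$.

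The hard part will be handling the quasi-linear passage to the limit together with the invocation of the strong maximum principle: unlike the semi-linear case $p=2$, the $p$-Laplacian is degenerate (or singular), so I must rely on the $C^{1,\eta}$ topology built into $\mathcal{Z}_n$ to guarantee convergence of $|\nabla h_j|^{p-2}\nabla h_j$ and on the nonlinear form of the strong maximum principle to rule out the trivial limit. Once closedness is proved, connectedness of $\mathcal{C}(\al_k^n)$ forces $\mathcal{C}^+=\mathcal{C}(\al_k^n)$, and the weak/strong maximum principles as described above complete the proof that every $(\al,h_\al)\in\mathcal{C}(\al_k^n)$ yields a solution of \eqref{31} with $h_\al>0$ in $B_{\frac{1}{\var_n}}$.
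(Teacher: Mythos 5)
Your proof is correct and takes essentially the same connectedness strategy as the paper; the only variation is your choice of distinguished subset $\mathcal{C}^+=\{(\al,h_\al)\in\mathcal{C}(\al_k^n):h_\al+\beta_\al(n)>0\ \text{on}\ \overline{B_{\frac{1}{\var_n}}}\}$, which makes openness trivial (a strict inequality on a compact set) while closedness requires the strong maximum principle applied to $v=h_\al+\beta_\al(n)$, whereas the paper works with $\{(\al,h_\al)\in\mathcal{C}(\al_k^n):h_\al\geq 0\}$, for which closedness is immediate and openness instead uses the $\tfrac12\beta_\al(n)$ perturbation bound together with the weak comparison principle. The two subsets are equivalent (by weak comparison in one direction and the trivial bound $h_\al+\beta_\al(n)\geq\beta_\al(n)>0$ in the other), and both arguments conclude identically via connectedness plus V\'azquez's strong maximum principle.
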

\begin{proof}
Obviously, $h_\al$ is a solution of \eqref{31}. Let $\mathcal{C}\subset\mathcal{C}(\al_k^n)$ be a subset consisting of the points $(\al,h_\al)$ which are non-negative solutions of $T^n(\al,h)=0$. Note that $(\al^n_k,u_{n,\al^n_k})\in\mathcal{C}$. We aim to prove $\mathcal{C}$ is open and closed in $\mathcal{C}(\al_k^n)$, then $\mathcal{C}=\mathcal{C}(\al_k^n)$ because $\mathcal{C}(\al_k^n)$ is connected. Furthermore, by the maximum principle, we deduce that, if $(\al,h_\al)\in\mathcal{C}$, then either $h_\al>0$ or $h_\al\equiv0$, however, zero is not a solution of \eqref{31}, thus $h_\al>0$.

First, we prove $\mathcal{C}$ is open in $\mathcal{C}(\al_k^n)$. Let $(\al,h_\al)\in\mathcal{C}$ and $(\hat\al, h_{\hat\al})\in\mathcal{C}(\al_k^n)$ such that
$\|h_\al- h_{\hat\al}\|_{\mathcal{Z}_n}<\frac12\beta_{\al}(n)$ and $\frac12\beta_{\al}(n)<\beta_{\hat\al}(n)<2\beta_{\al}(n)$, then
\begin{align}\label{339}
-\Delta_p h_{\hat\al}&=|x|^{\hat\al}| h_{\hat\al}+\beta_{\hat\al}(n)|^{p_{\hat\al}^*-2}(h_{\hat\al}+\beta_{\hat\al}(n))\\
&=|x|^{\hat\al}| h_{\hat\al}+\beta_{\hat\al}(n)|^{p_{\hat\al}^*-2}(h_\al+h_{\hat\al}-h_\al+\beta_{\hat\al}(n))\geq0\quad\text{in}\ B_{\frac{1}{\var_n}}.\nonumber
\end{align}
Then $(\hat\al, h_{\hat\al})\in\mathcal{C}$. Thus $\mathcal{C}$ is open in $\mathcal{C}(\al_k^n)$. On the other hand, if $(\al,h_\al)$ is a point in the closure of $\mathcal{C}$, then there is a sequence of points $(\al_i,h_i)$ in $\mathcal{C}$ that converges to $(\al, h_{\al})\in(0,+\infty)\times C_0^{1,\eta}(\overline B_{\frac{1}{\var_n}})$, as $i\to+\infty$. We have $h_\al$ is a solution of $T^n(\al,h_\al)=0$ and $h_\al\geq0$ in $B_{\frac{1}{\var_n}}$. Thus $\mathcal{C}$ is closed in $\mathcal{C}(\al_k^n)$, and hence $\mathcal{C}=\mathcal{C}(\al_k^n)$. This concludes our proof of Proposition \ref{th310}.
\end{proof}

\begin{thm}\label{th38}
Let $\al_k^n$ be defined by equation \eqref{330} and let $\mathcal{C}(\al_k^n)$ be the closed connected component of $\Pi_n$ which contains $(\al^n_k,u_{n,\al^n_k})$ and it is maximal with respect to the inclusion. Then either\\
(i) $\mathcal{C}(\al_k^n)$ is unbounded in $[0,+\infty)\times\mathcal{Z}_n$, or \\
(ii) there exists $\al_i^n$ with $i\ne k$ such that $(\al^n_i,u_{n,\al^n_i})\in\mathcal{C}(\al_k^n)$, or\\
(iii) $\mathcal{C}(\al_k^n)$ meets $\{0\}\times\mathcal{Z}_n$.
\end{thm}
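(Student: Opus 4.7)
The strategy is to invoke the Rabinowitz-type global bifurcation theorem in the Banach space $\mathcal{Z}_n$, after translating the known local bifurcation at $(\alpha_k^n, u_{n,\alpha_k^n})$ established in Theorem \ref{th36} into a global statement. The principal inputs are already in place: the operator $\mathcal{L}^n(\alpha,\cdot):\mathcal{Z}_n\to\mathcal{Z}_n$ is compact for each fixed $\alpha$ and continuous jointly in $(\alpha,h)$, so $T^n(\alpha,h)=h-\mathcal{L}^n(\alpha,h)$ is a compact perturbation of the identity, and by Lemma \ref{lem31} the radial branch $\mathcal{K}(n)=\{(\alpha,u_{n,\alpha}):\alpha\in(0,+\infty)\}$ is a smooth curve of isolated (in the radial class) solutions whose Leray-Schauder index in $\mathcal{Z}_n$ equals $(-1)^{m(\alpha)}$ by Theorem \ref{the36}.

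First, I would perform the standard shift $h=u_{n,\alpha}+k$ to put the radial branch at $k=0$, obtaining an equation $k=\widetilde{\mathcal{L}}^n(\alpha,k)$ whose linearization at $k=0$ coincides with $\mathcal{L}_h^n(\alpha,u_{n,\alpha})$. Theorem \ref{thm38} gives invertibility of $I-\mathcal{L}_h^n(\alpha,u_{n,\alpha})$ on $\mathcal{Z}_n$ for all $\alpha\neq\alpha_j^n$, and Theorem \ref{th36} shows that the parity of the Morse index in $\mathcal{Z}_n$ changes by exactly one across each $\alpha_k^n$, so the Leray-Schauder degree of $T^n(\alpha,\cdot)$ on any small ball around $u_{n,\alpha_k^n}$ jumps as $\alpha$ crosses $\alpha_k^n$. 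This is precisely the odd-crossing hypothesis in Rabinowitz's global bifurcation theorem.

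Applying Rabinowitz's theorem (as stated in, e.g., \cite{AM}) to the maximal connected component $\mathcal{C}(\alpha_k^n)$ of nontrivial solutions of $T^n(\alpha,h)=0$ in $(0,+\infty)\times\mathcal{Z}_n$ issuing from $(\alpha_k^n,u_{n,\alpha_k^n})$ then yields the classical trichotomy: either $\mathcal{C}(\alpha_k^n)$ is unbounded, or it returns to the trivial (radial) branch at another bifurcation point, or it runs out to the boundary of the admissible parameter domain. By Theorem \ref{thm38} the only points of the radial branch where such a return is possible are the $(\alpha_i^n,u_{n,\alpha_i^n})$ with $i\neq k$, which gives alternative (ii). Since the admissible parameter set is the open half-line $(0,+\infty)$, running to its boundary means either $\alpha\to+\infty$ together with the $\mathcal{Z}_n$-norm possibly staying bounded (absorbed into alternative (i) as unboundedness in $(0,+\infty)\times\mathcal{Z}_n$) or $\alpha\to 0^+$, which yields alternative (iii) of meeting $\{0\}\times\mathcal{Z}_n$.

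The step I expect to require the most care is the verification that the abstract Rabinowitz framework applies with the correct choice of spaces and that the local degree jump from Theorem \ref{th36} is genuinely $\pm1$ after the shift to $k=0$, so that the classical proof (homotopy invariance of the Leray-Schauder degree on large balls combined with excision around each radial bifurcation point) runs through verbatim. Once this bookkeeping is done, no additional estimate is needed: compactness of $\mathcal{L}^n(\alpha,\cdot)$ on the bounded domain $B_{1/\varepsilon_n}$ follows from standard $C^{1,\eta}$ regularity for the $p$-Laplacian and the Arzel\`a-Ascoli theorem, and continuity in $\alpha$ is immediate from the continuous dependence of $|x|^\alpha(h+\beta_\alpha(n))^{p_\alpha^\ast-1}$ on $\alpha$ uniformly on bounded subsets of $\mathcal{Z}_n$.
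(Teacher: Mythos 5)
Your proposal takes essentially the same approach as the paper: the paper's own proof simply cites the Rabinowitz-type global bifurcation framework (via \cite{AM}, \cite{G1}, \cite{R1}), and you supply the standard details — shifting the radial branch to zero, verifying compactness and the odd degree jump across $\alpha_k^n$ from Theorems \ref{thm38}, \ref{th36}, and \ref{the36}, and reading off the trichotomy adapted to the half-open parameter domain $(0,+\infty)$. This is correct and consistent with the intended argument.
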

\begin{proof}
The proof is similar to the global bifurcation result presented in \cite{AM}. For more detailed information, please refer to \cite{G1,R1}.
\end{proof}

\begin{rem}\label{rem312}The results of Proposition \ref{th310} and Theorem \ref{th38} are applicable to every bifurcation point arising from an odd change in the Morse index of the radial solution $u_{n,\alpha}$. After that, by making use of Theorem \ref{thm39}, when $k$ is an even integer, we are able to identify $\left[\frac{N}{2}\right]$ distinct continua of (positive) nonradial solutions that branch off from the point $(\alpha_{k}^n, u_{n,\alpha^{n}_k})$. Furthermore, these continua are global, as they adhere to Theorem \ref{th38}.
\end{rem}

\bigskip

\section{Crucial estimates of approximate solutions}
In this section, we will prove some crucial estimates for the approximate solution of \eqref{31}, where the solution does not necessarily need to be nonradial.

First, by comparing with the radial function $\Phi_n$ and maximum principle, we will overcome the absence of the Green function representation formula and prove the uniformly fast decay estimate for approximate solutions $v_n$ on large ball $B_{\frac{1}{\var_n}}$ in the following proposition.
\begin{prop}\label{pp41}
	Let $\al_n$ and $\var_n$ be sequences such that $\al_n\to\al>0$ and $\var_n\to0$ as $n\to+\infty$.
	Let $v_n$ be a sequence of solutions to
\begin{align}\label{41}
	\begin{cases}
		-\Delta_p v_n=|x|^{\alpha_n}\lr v_n+\beta_{\al_n}(n)\rr^{p_{\al_n}^*-1}, & x\in B_{\frac{1}{\var_n}}, \\
		v_n\geq0, & x\in  B_{\frac{1}{\var_n}},\\
		v_n=0, & x\in \partial B_{\frac{1}{\var_n}},
	\end{cases}
\end{align}
where $\displaystyle \beta_{\al_{n}}(n):=U_{\al_{n}}\lr\frac{1}{\var_n}\rr=\frac{C_{N,p,{\al_{n}}}\var_n^{\frac{N-p}{p-1}}}
			{(1+\var_n^{\frac{p+\alpha_{n}}{p-1}
				})^{\frac{N-p}{p+\al_{n}}}}$. Suppose that $\|v_n\|_X\leq A$ for some uniform positive constant $A$ for every $n$. Then, there exists $C>0$ independent of $n$ such that
\begin{align}\label{42}
		v_n(x)\leq\frac{C}{(1+|x|)^{\frac{N-p}{p-1}}}
\quad\,\, \, for\ every\ x\in\R^N\ and\ for\ every\ n\in\N.
	\end{align}
\end{prop}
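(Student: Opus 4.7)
The plan is to reduce \eqref{42} to a comparison with an explicit radial supersolution whose source no longer depends on $v_n$. From $\|v_n\|_X\leq A$ I immediately obtain $\|v_n\|_{L^\infty(\R^N)}\leq A$ and the preliminary pointwise decay $v_n(x)\leq A(1+|x|)^{-\gamma}$, where $\gamma\in\bigl(\tfrac{N(N-p)}{Np-(N-p)},\tfrac{N-p}{p-1}\bigr)$. Moreover, on $B_{1/\var_n}$ the constant $\beta_{\al_n}(n)\sim\var_n^{(N-p)/(p-1)}$ is itself dominated by $C(1+|x|)^{-(N-p)/(p-1)}\leq C(1+|x|)^{-\gamma}$, so $v_n+\beta_{\al_n}(n)\leq C(1+|x|)^{-\gamma}$ inside the ball. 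Substituting into the right-hand side of \eqref{41}, I obtain
$$
h_n(x):=|x|^{\alpha_n}\bigl(v_n(x)+\beta_{\alpha_n}(n)\bigr)^{p^*_{\alpha_n}-1}\leq C|x|^{-s_n}\qquad(|x|\geq 1),
$$
where $s_n:=\gamma(p^*_{\alpha_n}-1)-\alpha_n$. A short computation using the lower bound on $\gamma$ yields $s_n\geq N+\alpha_n(N-p)/(N(p-1)+p)$, so the gain $s_n-N$ is bounded below uniformly in $n$; this is the crucial structural gain that drives the scheme.

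Next, I would construct a radial barrier $\Phi^*$ on $\R^N\setminus B_1$ as the unique solution of the scalar boundary value problem
$$
-\Delta_p\Phi^*=C|x|^{-s}\quad\text{in }\{|x|>1\},\qquad \Phi^*(1)=A,\qquad \Phi^*(\infty)=0,
$$
for a fixed $s>N$ (a lower bound for the $s_n$) and a constant $C$ majorizing $h_n$ uniformly in $n$. After one radial integration, the quantity $J(r):=r^{N-1}|\Phi^{*\prime}|^{p-2}\Phi^{*\prime}(r)$ satisfies $J'(r)=-Cr^{N-1-s}$ and converges, because $s>N$, to a finite limit $L:=J(1)-C/(s-N)$ as $r\to\infty$. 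If $L\ne 0$ one has $|\Phi^{*\prime}(r)|\sim|L|^{1/(p-1)}r^{-(N-1)/(p-1)}$, and a second integration using $\Phi^*(\infty)=0$ delivers the sharp asymptotic $\Phi^*(r)\lesssim r^{-(N-p)/(p-1)}$; if instead $L=0$, the decay of $\Phi^*$ is even faster, of order $r^{-(s-p)/(p-1)}$. In either case $\Phi^*(r)\leq C'r^{-(N-p)/(p-1)}$ for $r\geq 1$, with $C'$ depending only on $A,\,C,\,s,\,N,\,p$, hence independent of $n$.

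I would then invoke the standard weak comparison principle for the $p$-Laplacian (of Tolksdorf type) on the annular domain $B_{1/\var_n}\setminus\overline{B_1}$: the weak inequality $-\Delta_p\Phi^*=C|x|^{-s}\geq h_n=-\Delta_p v_n$ holds, while on the boundary $\Phi^*(1)=A\geq v_n$ on $\p B_1$ and $\Phi^*(1/\var_n)\geq 0=v_n$ on $\p B_{1/\var_n}$. The comparison principle then yields $v_n\leq\Phi^*\leq C'|x|^{-(N-p)/(p-1)}$ throughout the annulus. Combined with the trivial bound $v_n\leq A\leq C(1+|x|)^{-(N-p)/(p-1)}$ on $B_1$ and the extension $v_n\equiv0$ outside $B_{1/\var_n}$, this delivers \eqref{42}.

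The main obstacle I anticipate is the ODE analysis of $\Phi^*$: one must identify the leading decay $(N-p)/(p-1)$, which comes from the homogeneous $p$-harmonic part rather than from the particular part forced by $C|x|^{-s}$, and must verify that the sign of the limit $L$ is compatible with $\Phi^*$ being positive and monotonically decreasing on the whole annulus. The conceptual point of the scheme is that a direct nonlinear comparison for $-\Delta_p u=|x|^{\alpha}u^{p^*_\alpha-1}$ fails because the nonlinearity is increasing in $u$; by absorbing the a priori decay of $v_n$ into a fixed majorant $C|x|^{-s}$, one restores a genuine supersolution structure and reduces everything to a weak comparison with a radial barrier, thereby avoiding Green-function and Kelvin-transform arguments, exactly as advertised in the introduction.
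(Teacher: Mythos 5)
Your strategy is conceptually the same as the paper's: majorize the right-hand side of \eqref{41} using the weighted bound $\|v_n\|_\gamma\leq A$ (and the estimate for $\beta_{\al_n}(n)$), obtain a source dominated by a fixed negative power of $|x|$ with exponent strictly greater than $N$ (your computation $s_n\geq N+\al_n\frac{N-p}{N(p-1)+p}$ is correct and is exactly the role played by the hypothesis $\gamma>\frac{N(N-p)}{Np-(N-p)}$), build a radial supersolution for the linearized source, and conclude by the weak comparison principle for $-\Delta_p$. Where you diverge is in the choice of domain and barrier: you compare on the annulus $B_{1/\var_n}\setminus\overline{B_1}$ against a barrier $\Phi^*$ posed on the exterior domain with two-point data $\Phi^*(1)=A$, $\Phi^*(\infty)=0$, whereas the paper takes the barrier $\Phi_n$ to be the radial solution on the \emph{full} ball $B_{1/\var_n}$ of the Dirichlet problem with source $C_0^{p^*_{\al_n}-1}|x|^{\al_n}(1+|x|)^{-\gamma(p^*_{\al_n}-1)}$ and zero data on $\partial B_{1/\var_n}$. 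The paper's choice is strictly simpler: integrating the radial ODE from $r=0$ automatically forces $\Phi_n'(r)<0$, the outer boundary condition $\Phi_n=0=v_n$ is matched by construction, and the decay $\Phi_n(r)\leq Cr^{-(N-p)/(p-1)}$ then follows from $|\Phi_n'(r)|\leq Cr^{-(N-1)/(p-1)}$ by integrating from $r$ to $1/\var_n$. No exterior boundary value problem and no boundary matching at $\partial B_1$ are needed.

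The loose end you flag yourself is genuine: the exterior two-point problem for $\Phi^*$ requires a shooting or existence argument to show a positive, eventually decreasing solution exists with $\Phi^*(1)=A$ and $\Phi^*(\infty)=0$, and that the limit $L$ of $J(r)=r^{N-1}|\Phi^{*\prime}|^{p-2}\Phi^{*\prime}$ satisfies $L\leq 0$ (if $L>0$, then $\Phi^{*\prime}>0$ for large $r$ while $\Phi^*(\infty)=0$ would force $\Phi^*<0$ near infinity). This can be repaired directly — for instance, set $J(1)=0$, define $\Phi^*(r)=\int_r^{+\infty}\bigl(|J(t)|/t^{N-1}\bigr)^{1/(p-1)}\md t$, and scale the constant $C$ up so that simultaneously $\Phi^*(1)\geq A$ and $C|x|^{-s}$ dominates $h_n$ — but as written your proposal does not close this step. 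The paper's ball-based barrier avoids the issue entirely; this is exactly the reason its version is the one to prefer here.
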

\begin{proof}
	Since there exists a uniform constant $C>0$ such that
\begin{align}\label{43}
		\beta_{\al_n}(n)\leq\frac{C}{(1+|x|)^{\frac{N-p}{p-1}}},\qquad\forall \,\, x\in B_{\frac{1}{\var_n}}.
	\end{align}
Note that $\|v_n\|_\gamma\leq\|v_n\|_X\leq A$ and from \eqref{43}, we have
	\begin{align}\label{44}
		|v_n+\beta_{\al_n}(n)|\leq\frac{C_0}{(1+|x|)^{\gamma}},\qquad\forall \,\, x\in B_{\frac{1}{\var_n}}\ \text{with}\ \gamma<\frac{N-p}{p-1}
	\end{align}
for a uniform constant $C_0>0$. Let $\Phi_n(|x|)$ be a radial function satisfying
\begin{align}\label{45}
	\begin{cases}
		-\Delta_p \Phi_n=\frac{C_0^{p_\al^*-1}|x|^{\al_n}}{(1+|x|)^{\gamma(p_{\al_n}^*-1)}}, & x\in B_{\frac{1}{\var_n}}, \\
		\Phi_n=0, & x\in \partial B_{\frac{1}{\var_n}}.
	\end{cases}
\end{align}
	Writing the equation \eqref{45} in radial variable, we can get
\begin{align*}
		-\lr r^{N-1}|\Phi_n'(r)|^{p-2}\Phi_n'(r)\rr'=\frac{C_0^{p_{\al_n}^*-1}r^{\al_n+N-1}}{(1+r)^{\gamma(p_{\al_n}^*-1)}}.
	\end{align*}
Integrating from $0$ to $r$, we get
\begin{align*}
		-r^{N-1}|\Phi_n'(r)|^{p-2}\Phi_n'(r)=C_0^{p_{\al_n}^*-1}\int_0^r\frac{ t^{\al_n+N-1}}{(1+t)^{\gamma(p_{\al_n}^*-1)}}\md t>0,
	\end{align*}
thus $\Phi_n'(r)<0$. Furthermore, we have
\begin{align}\label{46}
		-\Phi_n'(r)=\lr \frac{C_0^{p_{\al_n}^*-1}}{r^{N-1}}\int_0^r\frac{ t^{\al_n+N-1}}{(1+t)^{\gamma(p_{\al_n}^*-1)}}\md t\rr^{\frac1{p-1}}.
\end{align}

In order to ensure
$\frac{ r^{\al_n+N-1}}{(1+r)^{\gamma(p_{\al_n}^*-1)}}\in L^1(\R^+)$, we demand $\gamma>\frac{N(N-p)}{Np-(N-p)}>\frac{N+\al_n}{p_{\al_n}^*-1}$. Thus, from \eqref{46}, we get $-\Phi_n'(r)\leq Cr^{-\frac{N-1}{p-1}}$. Consequently,
$$\Phi_n(r)\leq \frac{C}{r^{\frac{N-p}{p-1}}}.$$
Combining \eqref{41} with \eqref{45}, we have
	\begin{align}\label{47}
	\begin{cases}
		-\Delta_p \Phi_n\geq-\Delta_p v_n & \text{in}\ B_{\frac{1}{\var_n}}, \\
		\Phi_n=v_n &  \text{on}\ \partial B_{\frac{1}{\var_n}}.
	\end{cases}
\end{align}
By the maximum principle (see e.g. \cite{CDL,DLL,LD,LDSMLMSB,LDBS,BS16}), we have $v_n\leq\Phi_n$, thus \eqref{42} holds true.
\end{proof}

In order to show the existence of bifurcation points, by using the Pohozaev identity and maximum principle, and comparing with the radial function $\Psi_n$ and $|\nabla \Psi_n|$, we will overcome the absence of the Green function representation formula and prove $\la=1$ in the following proposition.
\begin{prop}\label{pp42}
	Let $\al_n$ and $\var_n$ be sequences such that $\al_n\to\al>0$ and $\var_n\to0$ as $n\to+\infty$.
	Let $v_n$ be a sequence of solutions of \eqref{41} in $B_{\frac{1}{\var_n}}$, corresponding to the exponent $\al_n$. If $v_n\to U_{\la,\alpha}$ in $X$, then we have $\la=1$.
\end{prop}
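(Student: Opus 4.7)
The plan is to derive a Pohozaev-type identity on $B_R$ with $R=1/\var_n$ and to feed into it sharp pointwise bounds on $v_n$ and $|\nabla v_n|$ near $\p B_R$, so as to rigidly force $\la=1$. I would multiply \eqref{41} by $x\cdot\nabla v_n$ and integrate over $B_R$. The standard integration by parts for $\Delta_p$, using $v_n=0$ and $|\nabla v_n|=|\p_\nu v_n|$ on $\p B_R$, gives
\begin{equation*}
-\frac{N-p}{p}\int_{B_R}|\nabla v_n|^p\md x - \frac{p-1}{p}R\int_{\p B_R}|\p_\nu v_n|^p\md S = \int_{B_R}|x|^{\al_n}(v_n+\beta_{\al_n}(n))^{p_{\al_n}^*-1}(x\cdot\nabla v_n)\md x.
\end{equation*}
Writing $x\cdot\nabla v_n=x\cdot\nabla(v_n+\beta_{\al_n}(n))$, integrating the right-hand side by parts using $\textrm{div}(|x|^{\al_n}x)=(N+\al_n)|x|^{\al_n}$, and combining with the energy identity obtained by testing \eqref{41} against $v_n$ (note $\frac{N+\al_n}{p_{\al_n}^*}=\frac{N-p}{p}$), I arrive at
\begin{equation*}
\frac{p-1}{p}R\int_{\p B_R}|\p_\nu v_n|^p\md S = \frac{N-p}{p}\beta_{\al_n}(n)\int_{B_R}|x|^{\al_n}(v_n+\beta_{\al_n}(n))^{p_{\al_n}^*-1}\md x - \frac{\omega_{N-1}R^{N+\al_n}\beta_{\al_n}(n)^{p_{\al_n}^*}}{p_{\al_n}^*}.
\end{equation*}

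Next, I would construct a radial barrier $\Psi_n$ modelled on $u_{\var_n,\al_n}=U_{\al_n}(x)-\beta_{\al_n}(n)$, refining the $\Phi_n$ of Proposition \ref{pp41}. Using the weak comparison principle for $\Delta_p$ together with Hopf-type boundary estimates for the $p$-Laplacian (cf.\ \cite{CDL,DLL}), I would sandwich $v_n$ and $|\nabla v_n|$ between $\Psi_n$, $|\nabla\Psi_n|$ and matching lower barriers. The crucial point is that the boundary condition $v_n|_{\p B_R}=0$ is dictated solely by $\beta_{\al_n}(n)$ and is independent of the bulk profile $U_{\la,\al}$, so the comparison yields the uniform gradient expansion
\begin{equation*}
|\p_\nu v_n(R\omega)|=(1+o(1))\,|U'_{\al_n}(R)|=(1+o(1))\,C_{N,p,\al}\,\frac{N-p}{p-1}\,\var_n^{\frac{N-1}{p-1}},\qquad \omega\in\Sm^{N-1},
\end{equation*}
with no dependence on $\la$ in the prefactor.

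Finally, I would compute the leading $\var_n$-asymptotics of both sides of the Pohozaev identity. Using $\beta_{\al_n}(n)\sim C_{N,p,\al}\var_n^{\frac{N-p}{p-1}}$ and the gradient expansion above, the left-hand side behaves like $\frac{N-p}{p}\omega_{N-1}C_{N,p,\al}^p\bigl(\frac{N-p}{p-1}\bigr)^{p-1}\var_n^{\frac{N-p}{p-1}}$, while the subtracted term on the right is of order $\var_n^{\frac{N+\al}{p-1}}\ll\var_n^{\frac{N-p}{p-1}}$ and is therefore negligible. By the $X$-convergence $v_n\to U_{\la,\al}$ together with the uniform majorant \eqref{42} (which supplies the dominant for dominated convergence),
\begin{equation*}
\int_{B_R}|x|^{\al_n}(v_n+\beta_{\al_n}(n))^{p_{\al_n}^*-1}\md x\longrightarrow\int_{\R^N}|x|^{\al}U_{\la,\al}^{p_\al^*-1}\md x=\omega_{N-1}\,C_{N,p,\al}^{p-1}\Bigl(\frac{N-p}{p-1}\Bigr)^{p-1}\la^{-\frac{N-p}{p}},
\end{equation*}
the last equality being obtained by integrating $-\Delta_p U_{\la,\al}=|x|^{\al}U_{\la,\al}^{p_\al^*-1}$ over $B_M$ via the divergence theorem, sending $M\to+\infty$, and using the sharp decay $|U'_{\la,\al}(r)|\sim C_{N,p,\al}\la^{-\frac{N-p}{p(p-1)}}\frac{N-p}{p-1}r^{-\frac{N-1}{p-1}}$. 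Equating the leading $\var_n^{(N-p)/(p-1)}$ coefficients and cancelling the common factor $\frac{N-p}{p}\omega_{N-1}C_{N,p,\al}^p\bigl(\frac{N-p}{p-1}\bigr)^{p-1}$ leaves the rigid identity $1=\la^{-(N-p)/p}$, which forces $\la=1$ since $N>p$.

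The main obstacle is the second step: the $X$-convergence only provides $L^p$ gradient convergence and pointwise $L^\infty_\gamma$ control on $v_n$, which is far from a pointwise gradient estimate on $\p B_R$. The sharp asymptotic $|\p_\nu v_n(R\omega)|=(1+o(1))|U'_{\al_n}(R)|$ therefore has to be read off directly from the equation through two-sided radial comparison with $\Psi_n$ and $|\nabla\Psi_n|$; in the absence of a Kelvin transform and of a Green representation formula, constructing a lower barrier whose gradient matches $|U'_{\al_n}(R)|$ to leading order at the boundary is the delicate nonlinear part of the argument. Once this is in place, the Pohozaev identity rigidly selects $\la=1$.
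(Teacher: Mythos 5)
Your proposal correctly identifies the Pohozaev identity on $B_{1/\var_n}$ and the radial upper barrier $\Psi_n$; both appear in the paper's argument. But the core of your plan — squeezing $|\nabla v_n|$ on $\partial B_{1/\var_n}$ between $|\nabla\Psi_n|$ and a matching lower barrier so as to extract a $\la$-independent leading asymptotic $|\partial_\nu v_n|=(1+o(1))|U'_{\al_n}(1/\var_n)|$ — has a genuine gap, and you yourself flag it: you never construct the lower barrier. The difficulty is not just technical. The natural comparison functions are built on the profile $U_{\la,\al}+\delta(1+|x|)^{-\gamma}+\beta_{\al_n}(n)$, whose boundary gradient does depend on $\la$; so two-sided comparison with such barriers would give a $\la$-dependent envelope, and the claim that the prefactor in the gradient expansion is free of $\la$ is an extra assertion that is not justified by the comparison argument you sketch. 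Asserting it and then plugging it into Pohozaev to obtain $1=\la^{-(N-p)/p}$ is essentially assuming what you want to prove.

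The paper side-steps the lower barrier altogether. After subtracting the energy identity from the Pohozaev identity one gets an exact expression for $\frac{p-1}{p}\frac{1}{\var_n}\int_{\partial B_{1/\var_n}}|\nabla v_n|^p\md\mms$ in terms of bulk integrals (your first display). The lower bound on the boundary term is then obtained \emph{without any barrier}: by H\"older's inequality on the sphere,
\begin{equation*}
\int_{\partial B_{1/\var_n}}|\nabla v_n|^{p}\md \mms
\geq \Bigl(\int_{\partial B_{1/\var_n}}|\nabla v_n|^{p-1}\md \mms\Bigr)^{\frac{p}{p-1}}\Bigl(\int_{\partial B_{1/\var_n}}\md \mms\Bigr)^{-\frac{1}{p-1}},
\end{equation*}
and since $v_n=0$ with $v_n\geq0$ inside, $\int_{\partial B_{1/\var_n}}|\nabla v_n|^{p-1}\md\mms$ equals (by the divergence theorem applied to $-\Delta_p v_n$) the bulk integral $\int_{B_{1/\var_n}}|x|^{\al_n}(v_n+\beta_{\al_n}(n))^{p_{\al_n}^*-1}\md x$. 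Passing to the limit in the resulting inequality yields $\la\geq1$. The upper barrier $\Psi_n$ is then used only for the converse bound $\la\leq1$: because $\Psi_n$ is radial, it saturates the H\"older inequality on $\partial B_{1/\var_n}$, so $\int_{\partial B_{1/\var_n}}|\nabla v_n|^p\leq\int_{\partial B_{1/\var_n}}|\nabla\Psi_n|^p$ turns into a sharp, computable upper bound. In short: the trick you are missing is that one never needs a pointwise lower bound on $|\nabla v_n|$ at the boundary — H\"older plus the flux identity supplies the lower bound, and the upper barrier plus the Pohozaev identity supplies the upper bound, and the two together pin $\la=1$.
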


\begin{proof}
Multiplying the equation \eqref{41} by $v_n+\beta_{\al_n}(n)$, and integrating on $B_{\frac{1}{\var_n}}$, we have
\begin{align}\label{48p}
		&\int_{B_{\frac{1}{\var_n}}}|\nabla v_n|^p\md x+\int_{\partial B_{\frac{1}{\var_n}}}|\nabla v_n|^{p-1}\beta_{\al_n}(n)\md \mms=\int_{B_{\frac{1}{\var_n}}}|x|^{\alpha_n}\lr v_n+\beta_{\al_n}(n)\rr^{p_{\al_n}^*}\md x.
	\end{align}
On the one hand, multiplying the equation \eqref{41} by $x\cdot\nabla(v_n+\beta_{\al_n}(n))$ and integrating on $B_{\frac{1}{\var_n}}$, we obtain
\begin{align}\label{49p}
		&\quad\frac{N-p}{p}\int_{B_{\frac{1}{\var_n}}}|\nabla v_n|^p\md x+\frac{p-1}{p}\frac{1}{\var_n}\int_{\partial B_{\frac{1}{\var_n}}}|\nabla v_n|^{p}\md \mms\\
&=\frac{N+\al_n}{p^*_{\al_n}}\int_{B_{\frac{1}{\var_n}}}|x|^{\alpha_n}\lr v_n+\beta_{\al_n}(n)\rr^{p_{\al_n}^*}\md x-\frac{1}{p^*_{\al_n}}\lr\frac1{\var_n}\rr^{N+\al_n}\lr
\beta_{\al_n}(n)\rr^{p^*_{\al_n}}\omega_{N-1}\nonumber.
	\end{align}
where $\omega_{N-1}$ represents the $N-1$ dimensional unit sphere area.

By subtracting \eqref{48p}$\times\frac{N-p}{p}$ from \eqref{49p}, from \eqref{41}, we get
\begin{align}\label{410p}
		&\quad\frac{p-1}{p}\frac{1}{\var_n}\int_{\partial B_{\frac{1}{\var_n}}}|\nabla v_n|^{p}\md \mms\\
&=\frac{N-p}{p}\beta_{\al_n}(n)\int_{B_{\frac{1}{\var_n}}}|x|^{\alpha_n}\lr v_n+\beta_{\al_n}(n)\rr^{p_{\al_n}^*-1}\md x-\frac{1}{p^*_{\al_n}}\lr\frac1{\var_n}\rr^{N+\al_n}\lr
\beta_{\al_n}(n)\rr^{p^*_{\al_n}}\omega_{N-1}\nonumber.
	\end{align}
From H\"older inequality, we obtain
\begin{align}\label{411p}
\int_{\partial B_{\frac{1}{\var_n}}}|\nabla v_n|^{p}\md \mms
&\geq \lr\int_{\partial B_{\frac{1}{\var_n}}}|\nabla v_n|^{p-1}\md \mms\rr^{\frac{p}{p-1}}\lr\int_{\partial B_{\frac{1}{\var_n}}}\md \mms\rr^{-\frac{1}{p-1}}\\
&=\lr\int_{B_{\frac{1}{\var_n}}}|x|^{\alpha_n}\lr v_n+\beta_{\al_n}(n)\rr^{p_{\al_n}^*-1}\md x\rr^{\frac{p}{p-1}}\lr\int_{\partial B_{\frac{1}{\var_n}}}\md \mms\rr^{-\frac{1}{p-1}}\nonumber.
\end{align}
By \eqref{410p} and \eqref{411p}, we obtain
\begin{align}\label{412p}
&\quad \frac{N-p}{p}\beta_{\al_n}(n)\int_{B_{\frac{1}{\var_n}}}|x|^{\alpha_n}\lr v_n+\beta_{\al_n}(n)\rr^{p_{\al_n}^*-1}\md x-\frac{1}{p^*_{\al_n}}\lr\frac1{\var_n}\rr^{N+\al_n}\lr\beta_{\al_n}(n)\rr^{p^*_{\al_n}}\omega_{N-1}\\
&\geq \frac{p-1}{p}\frac{1}{\var_n}
\lr\int_{B_{\frac{1}{\var_n}}}|x|^{\alpha_n}\lr v_n+\beta_{\al_n}(n)\rr^{p_{\al_n}^*-1}\md x\rr^{\frac{p}{p-1}}\lr\int_{\partial B_{\frac{1}{\var_n}}}\md \mms\rr^{-\frac{1}{p-1}}\nonumber.
\end{align}
Let $n\to+\infty$, we have
\begin{equation}\label{417ppp}
\frac{N-p}{p}\int_{\R^N}|x|^{\alpha} U_{\la,\al}^{p_{\al}^*-1}\md x\geq\frac{p-1}{p}
\lr\int_{\R^N}|x|^{\alpha} U_{\la,\al}^{p_{\al}^*-1}\md x\rr^{\frac{p}{p-1}}\omega_{N-1}^{-\frac1{p-1}}\nonumber.
\end{equation}
Thus, we can get
\begin{align}\label{412pp}\la\geq1.\end{align}
On the other hand, since $v_n\to U_{\la,\alpha}$ in $X$ as $n\to\infty$, then $\forall\delta>0$, $\exists\ N_0$ such that, $\forall\ n>N_0,$
$$\|v_n-U_{\la,\alpha}\|_{X}<\delta.$$
Moreover,
$$v_n\leq U_{\la,\alpha}+\frac{\delta}{(1+|x|)^\gamma},$$
where $\gamma\in\left(\frac{N(N-p)}{Np-(N-p)},\frac{N-p}{p-1}\right)$. Let us consider a radial function $\Psi_n(|x|)$ satisfying
	\begin{align}\label{413p}
	\begin{cases}
		-\Delta_p \Psi_n(|x|)=|x|^{\alpha_n}\lr U_{\la,\alpha}+\frac{\delta}{(1+|x|)^\gamma}+\beta_{\al_n}(n)\rr^{p_{\al_n}^*-1} &  \text{in}\ B_{\frac{1}{\var_n}}, \\
		\Psi_n(|x|)=0 &  \text{on}\ \partial B_{\frac{1}{\var_n}}.
	\end{cases}
\end{align}

By \eqref{413p}, we get
\begin{align}\label{414p}
	\begin{cases}
		-\Delta_p \Psi_n\geq-\Delta_p v_n  & \text{in}\ B_{\frac{1}{\var_n}}, \\
		\Psi_n=v_n &  \text{on}\ \partial B_{\frac{1}{\var_n}}.
	\end{cases}
\end{align}
Moreover, we have $\nabla(\Psi_n-v_n)\cdot\nabla v_n\geq0$ and $\nabla(\Psi_n-v_n)\cdot\nabla \Psi_n\geq0$ on $\partial B_{\frac{1}{\var_n}}$, and hence
$$|\nabla\Psi_n|\geq|\nabla v_n| \qquad \text{on}\quad \partial B_{\frac{1}{\var_n}}.$$
According to the Pohozaev identity, we have
\begin{align}\label{416p}
&\quad \frac{N-p}{p}\beta_{\al_n}(n)\int_{B_{\frac{1}{\var_n}}}|x|^{\alpha_n}\lr v_n+\beta(n)\rr^{p_{\al_n}^*-1}\md x-\frac{1}{p^*_{\al_n}}\lr\frac1{\var_n}\rr^{N+\al_n}\lr\beta_{\al_n}(n)\rr^{p^*_{\al_n}}\omega_{N-1}\\
&=\frac{p-1}{p}\frac{1}{\var_n}\int_{\partial B_{\frac{1}{\var_n}}}|\nabla v_n|^{p}\md \mms\nonumber\\
&\leq\frac{p-1}{p}\frac{1}{\var_n}\int_{\partial B_{\frac{1}{\var_n}}}|\nabla\Psi_n|^{p}\md \mms\nonumber\\
&= \frac{p-1}{p}\frac{1}{\var_n}
\lr\int_{B_{\frac{1}{\var_n}}}|x|^{\alpha_n}\lr U_{\la,\alpha}+\frac{\delta}{(1+|x|)^\gamma}+\beta_{\al_n}(n)\rr^{p_{\al_n}^*-1}\md x\rr^{\frac{p}{p-1}}\lr\int_{\partial B_{\frac{1}{\var_n}}}\md \mms\rr^{-\frac{1}{p-1}}\nonumber.
\end{align}
Letting $\delta\to0, n\to+\infty$, from Lebesgue's dominated convergence theorem, we get
\begin{equation}\label{417p}
\frac{N-p}{p}\int_{\R^N}|x|^{\alpha}U_{\la,\al}^{p_{\al}^*-1}\md x\leq\frac{p-1}{p}
\lr\int_{\R^N}|x|^{\alpha}U_{\la,\al}^{p_{\al}^*-1}\md x\rr^{\frac{p}{p-1}}\omega_{N-1}^{-\frac1{p-1}}\nonumber.
\end{equation}
Thus, we infer that
 \begin{align}\label{418pp}\la\leq1.\end{align}
From \eqref{412pp} and \eqref{418pp}, we obtain $\la=1.$ This concludes our proof of Proposition \ref{pp42}.
\end{proof}

Furthermore, based on Proposition \ref{pp41}, we can obtain an upper bound estimate of the $\nabla v_n(x)$.
\begin{prop}\label{pro43}
Let $\al_n$ and $\var_n$ be sequences such that $\al_n\to\al>0$ and $\var_n\to0$ as $n\to+\infty$.
Let $v_n$ be a sequence of solutions of \eqref{41} in $B_{\frac{1}{\var_n}}$, corresponding to the exponent $\al_n$. If $\|v_n\|_X\leq A$ for some uniform positive constant $A$ for every $n$, then there exists $C>0$ independent of $n$ such that
	\begin{align}\label{q421}
		|\nabla v_n(x)|\leq\frac{C}{(1+|x|)^{\frac{N-1}{p-1}}}
		\ \ for\ every\ x\in B_{\frac{1}{\var_n}}\ and\ for\ every\ n\in\N.
	\end{align}
\end{prop}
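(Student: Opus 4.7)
The plan is to deduce the pointwise gradient bound from the pointwise decay of $v_n$ established in Proposition \ref{pp41}, via a local rescaling argument combined with the interior and boundary $C^{1,\eta}$ regularity of the $p$-Laplacian (DiBenedetto--Tolksdorf, Lieberman).

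For $x_0\in B_{1/\varepsilon_n}$ with $|x_0|\leq 2$, both sides of \eqref{q421} are comparable to positive constants, so it suffices to produce a universal $L^\infty$ bound on $\nabla v_n$ on $B_2$. This follows from the interior $C^{1,\eta}$ estimate applied to \eqref{41}: Proposition \ref{pp41} gives a uniform bound on $v_n$ and, together with $\beta_{\alpha_n}(n)\leq C\varepsilon_n^{(N-p)/(p-1)}\to 0$, a uniform bound on the inhomogeneous term of \eqref{41} on $B_4(0)\cap B_{1/\varepsilon_n}$.

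For interior points with $|x_0|\geq 2$ and $d(x_0,\partial B_{1/\varepsilon_n})\geq |x_0|/4$, set $R:=|x_0|/4$ and introduce
\begin{equation*}
\tilde v(y) := R^{(N-p)/(p-1)}\,v_n(x_0+Ry),\qquad y\in B_1(0).
\end{equation*}
A direct computation of how $\Delta_p$ transforms under $u\mapsto a\,u(\lambda\,\cdot)$ gives
\begin{equation*}
-\Delta_p \tilde v(y) \;=\; R^N\,|x_0+Ry|^{\alpha_n}\bigl(v_n(x_0+Ry)+\beta_{\alpha_n}(n)\bigr)^{p^{*}_{\alpha_n}-1}.
\end{equation*}
From Proposition \ref{pp41} and the estimate $\beta_{\alpha_n}(n)\leq C(1+|x|)^{-(N-p)/(p-1)}$ on $B_{1/\varepsilon_n}$, we have $v_n+\beta_{\alpha_n}(n)\leq CR^{-(N-p)/(p-1)}$ on $B_R(x_0)$; combined with the identity $(N-p)(p^{*}_\alpha-1)=N(p-1)+p(\alpha+1)$, which yields $(N+\alpha)-(N-p)(p^{*}_\alpha-1)/(p-1)=-(p+\alpha)/(p-1)$, one obtains the uniform bounds
\begin{equation*}
\|\tilde v\|_{L^\infty(B_1)}\leq C,\qquad \|\Delta_p\tilde v\|_{L^\infty(B_1)}\leq CR^{-(p+\alpha_n)/(p-1)}\leq C.
\end{equation*}
The DiBenedetto--Tolksdorf interior $C^{1,\eta}$ theorem then gives $|\nabla\tilde v(0)|\leq C$, and unscaling through $\nabla\tilde v(0)=R^{(N-1)/(p-1)}\nabla v_n(x_0)$ produces $|\nabla v_n(x_0)|\leq CR^{-(N-1)/(p-1)}\leq C(1+|x_0|)^{-(N-1)/(p-1)}$.

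Finally, for $x_0$ within distance $|x_0|/4$ of $\partial B_{1/\varepsilon_n}$, the same rescaling is performed but centered at the closest boundary point $y_0\in\partial B_{1/\varepsilon_n}$, replacing $B_1(0)$ by a suitable half-ball; Lieberman's boundary $C^{1,\eta}$ theorem for the homogeneous Dirichlet problem then produces an analogous gradient bound, the condition $v_n|_{\partial B_{1/\varepsilon_n}}=0$ and the smallness of $v_n$ near the boundary supplied by Proposition \ref{pp41} being exactly the inputs required. The main obstacle is the algebraic bookkeeping: the scaling exponent $(N-p)/(p-1)$ is forced on us by the profile in Proposition \ref{pp41}, and one needs the exact cancellation of the $R^N$ factor coming from the scaling of $\Delta_p$ against the $R^{-N-p(\alpha+1)/(p-1)}$ decay of $(v_n+\beta)^{p^{*}_\alpha-1}$ to leave only a negative power of $R$ in the rescaled forcing term, and this is what pins down the sharp exponent $(N-1)/(p-1)$ in \eqref{q421}.
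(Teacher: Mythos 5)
Your argument is essentially the paper's: rescale $v_n$ via the natural scaling $R^{(N-p)/(p-1)}v_n(R\,\cdot)$, use the pointwise decay from Proposition \ref{pp41} to obtain uniform $L^\infty$ bounds on the rescaled solution and its $p$-Laplacian (the paper phrases the algebra as $R^{N+\alpha_n}(R|y|)^{-\frac{N-p}{p-1}(p^*_{\alpha_n}-1)}\leq C$, equivalent to your $R^{-(p+\alpha_n)/(p-1)}$ computation), invoke Tolksdorf's gradient regularity, and unscale. The only cosmetic difference is that the paper centers at the origin and works on the annulus $B_4\setminus B_2$ rather than translating to $x_0$; your explicit treatment of the near-boundary case via Lieberman's boundary $C^{1,\eta}$ theory is a point the paper glosses over (its annulus $B_{4R}\setminus B_{2R}$ would protrude past $\partial B_{1/\varepsilon_n}$ when $|x|$ is within a fixed fraction of $1/\varepsilon_n$), so your version is in fact a touch more complete.
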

\begin{proof}
For all $ 1<|y|<\frac{1}{\var_n}$, $\forall 1<R<\frac{1}{\var_n |y|}$, define $v_n^R(y):=R^{\frac{N-p}{p-1}}v_n(Ry)$. From Proposition \ref{pp41}, we get $v_n(x)\leq\frac{C_{0}}{(1+|x|)^{\frac{N-p}{p-1}}}$ for some uniform constant $C_{0}>0$, and hence
\begin{align}\label{q422}
	 v_n^R(y)\leq R^{\frac{N-p}{p-1}}\frac{C_{0}}{|Ry|^{\frac{N-p}{p-1}}}\leq C_{0}.
\end{align}
Moreover, $v_n^R(y)$ satisfies
\begin{align}\label{q423}
	-\Delta_p v_n^R(y)&=R^{N+\al_n}|y|^{\alpha_n}\lr v_n(Ry)+\beta_{\al_n}(n)\rr^{p_{\al_n}^*-1} \\
&\leq C_1 R^{N+\al_n}|y|^{\alpha_n}\left(R|y|\right)^{-\frac{N-p}{p-1}(p_{\al_n}^*-1)}\leq C_1, \qquad \forall \,\, 1<|y|<\frac{1}{\var_n},  \nonumber
\end{align}
where \(C_1\) is independent of $n$, \(R\) and \(y\). Consequently, from the gradient regularity estimate in \cite{T}, we have
$$\|\nabla_y v_n^R(y)\|_{L^\infty(B(0,4)\setminus B(0,2))}\leq C_2,$$
where \(C_2\) is independent of $n$ and \(R\).

For all $1<|x|<\frac{1}{\var_n}$, let $x=Ry$, where $2<|y|<4$, thus we get
$$|\nabla_x v_n(x)|=R^{\frac{1-N}{p-1}}|\nabla_y v_n^R(y)|\leq C_2R^{\frac{1-N}{p-1}}\leq\frac{4^{\frac{N-1}{p-1}}C_2}{|x|^{\frac{N-1}{p-1}}}.$$
This gives the desired estimate \eqref{q421}.
\end{proof}

Given $\Omega\subset \R^N$, $q\geq 1$, and a non-negative locally integrable function $\omega_0:\R^n\to \R$, we define the Banach space $L^q(\Omega;\omega_0)$ as the space of measurable functions $\varphi:\Omega\to \R$ such that the norm
$$\|\varphi \|_{L^q(\Omega;\,\omega_0)}:=\left(\int_{\Omega} |\varphi|^q \,\omega_0(x)\, \md x\right)^{\frac 1 q}<+\infty.$$
Given $\omega_1 \in L^1_{\rm loc}(\R^n\setminus \{0\})$ non-negative,
we denote by $C^1_{c,0}(\R^N)$ the space of compactly supported functions of class $C^1$ that are constant in a neighborhood of the origin, and we
define $\dot{W}^{1,q}(\mathbb R^N;\omega_1)$ as the closure of $C^1_{c,0}(\R^N)$ with respect to the norm
$$\|\varphi\|_{\dot{W}^{1,q}(\mathbb R^N;\,\omega_1)}:=\left(\int_{\mathbb R^N}|D\varphi|^q \,\omega_1(x) \,\md x\right)^{\frac 1 q}.$$
Inspired by Figalli and Zhang \cite{FZ}, we can prove the following weighted Sobolev embedding $\dot{W}^{1,2}(\R^N;\,|\nabla U_\al|^{p-2})\hookrightarrow L^2(\R^N;\,|x|^\beta U_\al^{p_\al^*-2})$.
\begin{prop}\label{pp43}
Assume $1<p<N$ and $\al>0$. If $\varphi\in \dot{W}^{1,2}(\R^N;\,|\nabla U_\al|^{p-2})\cap L^2(\R^N;\,|x|^\beta U_\al^{p_\al^*-2})$,
	$\beta\in\left[\frac{p\al-2\al-p}{p-1}, \frac{p+p\al}{p-1}\right]\bigcap\left(\frac{p\al+2p-N}{p-1},+\infty\right)$, then we have
	\begin{align}	
		\int_{\R^N}	|x|^\beta U_\al^{p_\al^*-2}|\varphi|^2\md x\leq
		C(N,p,\al)\int_{\R^N}	|\nabla U_\al|^{p-2} |\nabla\varphi|^{2}\md x.\label{aa436}
		\end{align}
\end{prop}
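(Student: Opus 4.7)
The plan is to reduce the weighted inequality to a family of one-dimensional weighted Hardy-Sobolev inequalities via spherical-harmonic decomposition and the change of variable used in Section~2. By a standard density argument it suffices to establish \eqref{aa436} for $\varphi\in C^1_{c,0}(\R^N)$. Decomposing $\varphi=\sum_{k\ge 0}\sum_i \varphi_{k,i}(r)\Phi_{k,i}(\theta)$ into spherical harmonics and using the radial structure of both weights, orthogonality of $\{\Phi_{k,i}\}$ splits both sides of \eqref{aa436} into sums over $(k,i)$. Thus it suffices to prove, for each $k\ge 0$ and each admissible radial profile $\varphi_k$ (with $\varphi_k(0)=0$ when $k\ge 1$),
\begin{equation*}
\int_0^{+\infty} r^{N-1+\beta}U_\al^{p_\al^*-2}|\varphi_k|^2\md r\ \lee\ C\int_0^{+\infty} r^{N-1}|U_\al'|^{p-2}\lr|\varphi_k'|^2+\la_k r^{-2}|\varphi_k|^2\rr\md r.
\end{equation*}

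Next, I would perform the change of variable $s=r^{(p+\al)/p}$, $\eta_k(s)=\varphi_k(r)$ already employed in Section~2. This conjugates the 1D problem to a weighted inequality involving the standard bubble $W(s)=C_{M,p}(1+s^{p/(p-1)})^{-(M-p)/p}$ in the effective dimension $M=\frac{p(N+\al)}{p+\al}>p$. A direct computation shows that the Jacobian $r^{N-1}\md r$ combined with the $s$-powers produced by $U_\al^{p_\al^*-2}$ and $|U_\al'|^{p-2}$ merge into a one-dimensional Caffarelli--Kohn--Nirenberg-type inequality of the form
\begin{equation*}
\int_0^{+\infty}s^{\mu-2}|\eta_k|^2\md s\ \lee\ C\int_0^{+\infty}s^{\mu}\lr|\eta_k'|^2+\frac{\la_k'}{s^2}|\eta_k|^2\rr\md s,
\end{equation*}
where $\mu=\mu(N,p,\al,\beta)$ and $\la_k'\sim \la_k$. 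The two-sided hypothesis on $\beta$ is chosen precisely so that (i) $\mu>2-1=1$ (the Hardy admissibility condition in one variable with $q=2$), and (ii) the transformed integrals remain finite at $0$ and $+\infty$; this is exactly the range where the weighted Hardy inequality \eqref{dd214} (applied in dimension $1$, equivalently Lemma~2.3 of \cite{LDMR}) with $q=2$ is available.

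For each $k\ge 1$ the Hardy term $\la_k'/s^2$ on the right-hand side absorbs the left-hand side with a constant that does not get worse as $k$ grows, so summing the resulting bounds in $(k,i)$ and invoking Parseval yields \eqref{aa436}. The radial mode $k=0$ is the delicate one, since the Hardy term vanishes and one must rely on a genuine weighted Sobolev embedding in effective dimension $M>p$ applied to functions that are constant near the origin; it is this step that forces the lower endpoint $\beta>\frac{p\al+2p-N}{p-1}$ in the hypothesis. The main obstacle I expect is the exponent bookkeeping that identifies the prescribed range of $\beta$ with the admissible range $\mu>1$ after the change of variables, together with verifying that the constant $C(N,p,\al)$ produced mode-wise is uniform in the angular index $k$; once the $k=0$ case is handled via Sobolev (as opposed to Hardy), uniformity for $k\ge 1$ follows from the monotonicity of $\la_k'$.
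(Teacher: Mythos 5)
Your route is genuinely different from the paper's, but as written it contains a real gap rather than a proof.

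The paper avoids any mode-by-mode reduction. After passing to polar coordinates it uses the radial fundamental theorem of calculus in the form $|\varphi(r\theta)|^2\leq 2\int_r^{+\infty}|\varphi(t\theta)||\nabla\varphi(t\theta)|\md t$ (discarding the angular derivatives entirely), applies Fubini, uses the lower restriction $\beta>\frac{p\al+2p-N}{p-1}$ to get the absorption bound $\int_0^t r^{\beta+N-1}(1+r^{\frac{p+\al}{p-1}})^{-\sigma}\md r\leq C\,t^{\beta+N}(1+t^{\frac{p+\al}{p-1}})^{-\sigma}$, and finishes with Cauchy--Schwarz together with the pointwise weight comparison $|x|^{\beta+2}U_\al^{p_\al^*-2}\leq C|\nabla U_\al|^{p-2}$. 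The two-sided restriction $\beta\in\left[\frac{p\al-2\al-p}{p-1},\frac{p+p\al}{p-1}\right]$ is precisely what produces that pointwise comparison (one endpoint for $|x|\leq 1$, the other for $|x|\geq 1$). So the two pieces of the $\beta$-hypothesis play different roles (Fubini absorption on the one hand, pointwise weight domination on the other), not the single role you assign them.

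Two concrete problems with your outline. First, after $s=r^{(p+\al)/p}$ the transformed weights are not pure powers of $s$: they still carry $(1+s^{p/(p-1)})$ factors, whose power-law exponents at $s\to 0$ and $s\to+\infty$ differ. Thus the asserted reduction to a pure-power Caffarelli--Kohn--Nirenberg inequality $\int s^{\mu-2}|\eta_k|^2\lee C\int s^\mu(|\eta_k'|^2+\la_k' |\eta_k|^2/s^2)$ is not literally available; any version of your argument has to split $\{s<1\}$ from $\{s>1\}$ and match exponents across $s=1$. Second, and more seriously, once one has the pointwise comparison of weights, all modes $k\ge 1$ are immediate from the Hardy term (using $\la_k\geq\la_1=N-1>0$), so the entire content of the proposition sits in the radial mode $k=0$. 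That is exactly the step you flag as delicate and then defer to an unspecified "weighted Sobolev embedding in effective dimension $M$." What is actually required there is a weighted 1D Hardy-type inequality for the inhomogeneous weight $s^{M-1}|W'(s)|^{p-2}$, for functions constant near $s=0$ and vanishing at infinity; the cited estimate \eqref{dd214} is stated on $\R^N\setminus B_R$ with pure power weights and does not deliver it without additional work near the origin and across $s=1$. The most economical way to close $k=0$ is precisely the Fubini--Cauchy--Schwarz argument the paper runs — which, applied directly to $\varphi$, renders the spherical-harmonic decomposition unnecessary from the start.
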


\begin{proof}
To prove \eqref{aa436}, we can assume by approximation that $\varphi\in C_{c}^1(\R^N)$. From Fubini's theorem and using polar coordinates, from $\beta>\frac{p\al+2p-N}{p-1},$ we obtain
	\begin{align}	
		&\quad\int_{\R^N}	|x|^\beta U_\al^{p_\al^*-2}|\varphi|^2\md x\label{a436}\\ &\leq  C(N,p,\al)
		\int_{\Sm^{N-1}}\int_0^{+\infty}r^{\beta+N-1}\lr1+r^{\frac{p+\al}{p-1}}\rr^{-\frac{N-p}{p+\al}(\frac{p(N+\al)}{N-p}-2)}|\varphi(r\theta)|^2\md r\md\theta\nonumber\\
		&\leq C(N,p,\al)\int_{\Sm^{N-1}}\int_0^{+\infty}r^{\beta+N-1}\lr1+r^{\frac{p+\al}{p-1}}\rr^{-\frac{N-p}{p+\al}(\frac{p(N+\al)}{N-p}-2)}\int_r^{+\infty}|\varphi(t\theta)||\nabla\varphi(t\theta)|\md t\md r\md\theta\nonumber\\
		&\leq C(N,p,\al) \int_{\Sm^{N-1}}\int_0^{+\infty}\int_0^{t}|\varphi(t\theta)||\nabla\varphi(t\theta)|r^{\beta+N-1}\lr1+r^{\frac{p+\al}{p-1}}\rr^{-\frac{N-p}{p+\al}(\frac{p(N+\al)}{N-p}-2)}\md r\md t\md\theta\nonumber\\
		&\leq C(N,p,\al)\int_{\Sm^{N-1}}\int_0^{+\infty}|\varphi(t\theta)||\nabla\varphi(t\theta)|t^{\beta+N}\lr1+t^{\frac{p+\al}{p-1}}\rr^{-\frac{N-p}{p+\al}(\frac{p(N+\al)}{N-p}-2)}\md t\md\theta\nonumber\\
		&\leq C(N,p,\al)\lr\int_{\Sm^{N-1}}\int_0^{+\infty}|\varphi(t\theta)|^2t^{\beta+N-1}\lr1+t^{\frac{p+\al}{p-1}}\rr^{-\frac{N-p}{p+\al}(\frac{p(N+\al)}{N-p}-2)}\md t\md\theta\rr^{\frac12\nonumber}\\
		&\quad\times\lr\int_{\Sm^{N-1}}\int_0^{+\infty}|\nabla\varphi(t\theta)|^2t^{\beta+N+1}\lr1+t^{\frac{p+\al}{p-1}}\rr^{-\frac{N-p}{p+\al}(\frac{p(N+\al)}{N-p}-2)}\md t\md\theta\rr^{\frac12}.\nonumber
	\end{align}	
Note that, for $|x|\in(0,1]$, if $\beta+2\geq\frac{(1+\al)(p-2)}{p-1}$, then $$|x|^{\beta+2}\sim|x|^{\beta+2-\frac{(1+\al)(p-2)}{p-1}}|\nabla U_\al|^{p-2}\leq C|\nabla U_\al|^{p-2};$$
 and for $|x|\in(1,+\infty)$, if $ \beta+2\leq\frac{3p+p\al-2}{p-1}$, then $$|x|^{\beta+2}\lr1+|x|^{\frac{p+\al}{p-1}}\rr^{-\frac{N-p}{p+\al}(\frac{p(N+\al)}{N-p}-2)}\sim|x|^{\beta+2-\frac{3p+p\al-2}{p-1}}|\nabla U_\al|^{p-2}\leq C|\nabla U_\al|^{p-2}.$$
Hence \eqref{aa436} follows from \eqref{a436}. This finishes our proof.
\end{proof}
\begin{rem}
In \cite{HY}, without any symmetry assumption on $V$ or $f$, the authors obtained the following \(L^p\)-Hardy inequality:
\begin{align}\label{hy1}
\int_{\Omega} V |\nabla u|^q \mathrm{d}x \geq - \int_{\Omega} \frac{\mathrm{div}(V|\nabla f|^{q-2}\nabla f)}{f^{q-1}}|u|^q\md x.
\end{align}
The estimate \eqref{aa436} in Proposition \ref{pp43} may also be derived from \eqref{hy1} by suitable choice of the function $f$.
\end{rem}

By applying the weighted Sobolev inequality \eqref{aa436} in Proposition \ref{pp43} with $\alpha=\alpha_n$, we overcome the nonlinear virtue of the $p$-Laplacian $\Delta_p$ and first prove the following uniformly decay estimate in integral form for $w_n$.
\begin{prop}\label{ppp43}
If $p\in(1,N)$,  $\beta\in(\max\{\al_n,\frac{p\al_n-2\al_n-p}{p-1},\frac{p\al_n+2p-N}{p-1}\},\frac{p+p\al_n}{p-1}]$, then there exists some uniform constant $C>0$ independent of $n$ such that
	\begin{align}	
		\int_{B_{\frac{1}{\var_n}}}|x|^\beta U_{\al_n}^{p_{\al_n}^*-2}|w_n|^2\md x&\leq
		C.\label{aa438}
	\end{align}
\end{prop}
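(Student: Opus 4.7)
The plan is to derive a linearized equation for $w_n$ from the equations satisfied by $u_n$ and $v_n$, test this equation with $w_n$ itself, and then combine a standard monotonicity estimate for the $p$-Laplacian with the weighted Sobolev inequality of Proposition~\ref{pp43} through an absorption trick that uses the strict inequality $\beta>\al_n$.

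Concretely, subtracting the equation for $v_n$ from the one for $u_n$ and dividing by $\|u_n-v_n\|_\infty$, the function $w_n$ solves
\begin{equation*}
-\textrm{div}\bigl(A_n(x)\nabla w_n\bigr)=|x|^{\al_n}\,b_n(x)\,w_n\quad\text{in}\,\, B_{1/\var_n},\qquad w_n=0\,\,\text{on}\,\, \partial B_{1/\var_n},
\end{equation*}
where, setting $W_t:=v_n+t(u_n-v_n)$,
\begin{equation*}
A_n(x)=\int_0^1 \lr|\nabla W_t|^{p-2}I+(p-2)|\nabla W_t|^{p-4}\nabla W_t\otimes\nabla W_t\rr\md t,
\end{equation*}
and $b_n(x)=(p_{\al_n}^*-1)\int_0^1(W_t+\beta_{\al_n}(n))^{p_{\al_n}^*-2}\md t$. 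By Proposition~\ref{pp41} one has $|b_n|\leq C\,U_{\al_n}^{p_{\al_n}^*-2}$ uniformly in $n$, and by Proposition~\ref{pro43} the pointwise bound $|\nabla u_n|,|\nabla v_n|\lesssim|\nabla U_{\al_n}|$ holds uniformly; combined with the $C^1_{\rm loc}$ convergence $u_n,v_n\to U_{\al}$ (which follows from standard $p$-Laplacian regularity and the convergence $u_n,v_n\to U_\alpha$ in $X$), one also obtains the matching lower bound $|\nabla u_n|+|\nabla v_n|\gtrsim|\nabla U_{\al_n}|$ away from the origin.

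Testing the equation with $w_n$ gives
\begin{equation*}
\int_{B_{1/\var_n}} A_n\nabla w_n\cdot\nabla w_n\,\md x=\int_{B_{1/\var_n}}|x|^{\al_n}b_n\,w_n^2\,\md x.
\end{equation*}
The elementary monotonicity inequality for the $p$-Laplacian vector field, together with the two-sided gradient comparability above, yields $A_n\xi\cdot\xi\geq c|\nabla U_{\al_n}|^{p-2}|\xi|^2$ with $c>0$ independent of $n$, and hence
\begin{equation*}
\int_{B_{1/\var_n}}|\nabla U_{\al_n}|^{p-2}|\nabla w_n|^2\,\md x\leq C\int_{B_{1/\var_n}}|x|^{\al_n}U_{\al_n}^{p_{\al_n}^*-2}w_n^2\,\md x.
\end{equation*}
Extending $w_n$ by zero and applying Proposition~\ref{pp43} with $\al=\al_n$ and the $\beta$ from the hypothesis (which lies in the admissible range there), it follows that
\begin{equation*}
\int_{B_{1/\var_n}}|x|^\beta U_{\al_n}^{p_{\al_n}^*-2}w_n^2\,\md x\leq C'\int_{B_{1/\var_n}}|x|^{\al_n}U_{\al_n}^{p_{\al_n}^*-2}w_n^2\,\md x.
\end{equation*}

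The proof concludes with an absorption argument that exploits $\beta>\al_n$. For a large $R$ to be chosen, split the right-hand side as $\int_{B_R}+\int_{B_{1/\var_n}\setminus B_R}$. On $B_R$, using $\|w_n\|_\infty\leq 1$ and the local integrability of $|x|^{\al_n}U_{\al_n}^{p_{\al_n}^*-2}$, the contribution is bounded by some constant $C(R)$ independent of $n$. On $B_{1/\var_n}\setminus B_R$, since $\al_n-\beta<0$ one has $|x|^{\al_n}\leq R^{\al_n-\beta}|x|^\beta$ and therefore
\begin{equation*}
\int_{B_{1/\var_n}\setminus B_R}|x|^{\al_n}U_{\al_n}^{p_{\al_n}^*-2}w_n^2\,\md x\leq R^{\al_n-\beta}\int_{B_{1/\var_n}}|x|^\beta U_{\al_n}^{p_{\al_n}^*-2}w_n^2\,\md x.
\end{equation*}
Since $\al_n\to\al$ and $\beta>\al$, one can pick $R$ large and fixed so that $C' R^{\al_n-\beta}\leq 1/2$ for all large $n$; absorbing then gives the uniform estimate \eqref{aa438}.

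The principal technical obstacle is the uniform ellipticity bound $A_n\xi\cdot\xi\geq c|\nabla U_{\al_n}|^{p-2}|\xi|^2$, which rests on pointwise comparability $|\nabla u_n|+|\nabla v_n|\sim|\nabla U_{\al_n}|$ with constants independent of $n$. The upper half is immediate from Proposition~\ref{pro43}, whereas the lower half has to be extracted from $C^1_{\rm loc}$ convergence together with uniform gradient non-degeneracy of $U_\al$ away from the origin; the singularity of $|\nabla U_{\al_n}|^{p-2}$ near points where $\nabla U_{\al_n}$ is small (relevant when $p<2$) requires a careful pointwise reading of the matrix formula for $A_n$.
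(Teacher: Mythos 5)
Your proof takes essentially the same route as the paper's — linearize, test with $w_n$, invoke $p$-Laplacian monotonicity to arrive at an inequality of the form $\int|x|^\beta U_{\al_n}^{p_{\al_n}^*-2}|w_n|^2\leq C\int|x|^{\al_n}\xi_n|w_n|^2$, and close by absorption — but your absorption step is genuinely different and simpler. The paper replaces $|w_n|^2$ by $|w_n|^{2-\delta}$ on the exterior region, applies H\"older with exponents $2/(2-\delta)$ and $2/\delta$, verifies the finiteness of the auxiliary weight integral in \eqref{ab438} for suitably small $\delta$, and concludes via Young's inequality. You instead observe $\xi_n\lesssim U_{\al_n}^{p_{\al_n}^*-2}$, bound $|x|^{\al_n}\leq R^{\al_n-\beta}|x|^\beta$ on $\{|x|>R\}$, and absorb by taking $R$ large and fixed, replacing the paper's two-parameter H\"older--Young argument by a single cutoff radius; the only requirement is that $\beta-\al_n$ stay bounded away from zero, which holds in the application with $\beta=\frac{p+p\al_n}{p-1}$.

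One correction on the ellipticity bound you flag as the main obstacle. For $p\geq 2$ no gradient comparability is needed at all: $\nabla u_n=\nabla U_{\al_n}$ inside the ball, and the monotonicity already gives $A_n\xi\cdot\xi\gtrsim(|\nabla u_n|+|\nabla v_n|)^{p-2}|\xi|^2\geq|\nabla u_n|^{p-2}|\xi|^2=|\nabla U_{\al_n}|^{p-2}|\xi|^2$. It is only for $1<p<2$ that one needs $|\nabla v_n|\lesssim|\nabla U_{\al_n}|$, and you have the hard and easy halves reversed: the ``lower half'' $|\nabla u_n|+|\nabla v_n|\geq|\nabla U_{\al_n}|$ is an identity (since $\nabla u_n=\nabla U_{\al_n}$), while the ``upper half'' $|\nabla v_n|\lesssim|\nabla U_{\al_n}|$ is \emph{not} immediate from Proposition~\ref{pro43} near the origin — that proposition gives only $|\nabla v_n|\leq C(1+|x|)^{-(N-1)/(p-1)}$, whereas $|\nabla U_{\al_n}|\sim|x|^{(1+\al_n)/(p-1)}$ vanishes at $0$, so a separate regularity argument on $\{|x|<1\}$ is required. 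The paper's chain, through the line $\int|\nabla w_n|^2(|\nabla u_n|^{p-2}+|\nabla v_n|^{p-2})\leq\cdots$, relies on the same pointwise comparison when $p<2$ and passes over it equally silently.
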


\begin{proof}
Taking $\varphi=w_n:=\frac{u_{n}-v_{n}}{\|u_n-v_n\|_{L^{\infty}(\mathbb{R}^{N})}}$ in Proposition \ref{pp43}, by $|w_n|\leq1$, we get
	\begin{align}	
		\int_{B_{\frac{1}{\var_n}}}|x|^\beta U_{\al_n}^{p_{\al_n}^*-2}|w_n|^2\md x&\leq
		C\int_{B_{\frac{1}{\var_n}}}|\nabla w_n|^2|\nabla U_{\al_n}|^{p-2}\md x\nonumber\\
		&\leq C\int_{B_{\frac{1}{\var_n}}}|\nabla w_n|^2\lr|\nabla u_n|^{p-2}+|\nabla v_n|^{p-2}\rr\md x\nonumber\\
		&\leq C\frac{1}{\|u_n-v_n\|^2_{L^{\infty}(\mathbb{R}^{N})}}\int_{B_{\frac{1}{\var_n}}}\nabla (u_n-v_n)\lr|\nabla u_n|^{p-2}\nabla u_n-|\nabla v_n|^{p-2}\nabla v_n\rr\md x\nonumber\\
		&\leq C\int_{B_{\frac{1}{\var_n}}}|x|^{\al_n} \xi_n(x)|w_n|^2\md x\nonumber\\
		&= C\int_{B_1}|x|^{\al_n} \xi_n(x)|w_n|^2\md x+C\int_{B_{\frac{1}{\var_n}}\setminus B_1}|x|^{\al_n} \xi_n(x)|w_n|^2\md x\nonumber\\
		&\leq C+C\int_{B_{\frac{1}{\var_n}}\setminus B_1}|x|^{\al_n} \xi_n(x)|w_n|^{2-\delta}\md x \label{a438}\\
		&\leq C+C\lr\int_{B_{\frac{1}{\var_n}}}|x|^\beta U_{\al_n}^{p_{\al_n}^*-2}|w_n|^2\md x\rr^{\frac{2-\delta}{2}}\nonumber\\
		&\quad\times\lr\int_{B_{\frac{1}{\var_n}}\setminus B_1}|x|^{\frac{2\al_n}{\delta}}|\xi_n(x)|^{\frac2\delta}|x|^{-\frac{\beta(2-\delta)}{\delta}}U_{\al_n}^
		{-(p_{\al_n}^*-2)\frac{2-\delta}{\delta}}\md x\rr^{\frac{\delta}{2}}\nonumber
	\end{align}
for $\delta\in(0,2)$, where
	\begin{align*}
		\xi_n(x):&=(p_{\al_n}^*-1)\int_0^1\lr t u_n+(1-t) v_n+\beta_{\al_n}(n)\rr^{p_{\al_n}^*-2}\md t.
	\end{align*}
If we can find $\delta\in(0,2)$ such that
	\begin{align}\label{ab438}
		\int_{B_{\frac{1}{\var_n}}\setminus B_1}|x|^{\frac{2\alpha_n}{\delta}}|\xi_n(x)|^{\frac2\delta}|x|^{-\frac{\beta(2-\delta)}{\delta}}U_{\al_n}^
		{-(p_{\al_n}^*-2)\frac{2-\delta}{\delta}}\md x<+\infty,
	\end{align}
then we can get \eqref{aa438}. In fact, from Proposition \ref{pp41}, we have
	\begin{align*}
		&\quad\int_{B_{\frac{1}{\var_n}}\setminus B_1}|x|^{\frac{2\alpha_n}{\delta}}|\xi_n(x)|^{\frac2\delta}|x|^{-\frac{\beta(2-\delta)}{\delta}}U_{\al_n}^
		{-(p_{\al_n}^*-2)\frac{2-\delta}{\delta}}\md x\nonumber\\
		&\leq C\int_{B_{\frac{1}{\var_n}}\setminus B_1}|x|^{\frac{2\alpha_n}{\delta}}\lr1+|x|\rr^{-\frac{pN+p{\al_n}-2N+2p}{p-1}\frac2\delta}|x|^{-\frac{\beta(2-\delta)}{\delta}}(1+|x|)^{\frac{pN+p{\al_n}-2N+2p}{p-1}\frac{2-\delta}{\delta}}\md x\nonumber\\
		&\leq C\int_{B_{\frac{1}{\var_n}}\setminus B_1}|x|^{\frac{2\alpha_n}{\delta}-\frac{\beta(2-\delta)}{\delta}}\lr1+|x|\rr^{-\frac{pN+p\al_n-2N+2p}{p-1}}\md x\nonumber\\
		&\leq C\int_{1}^{+\infty}\frac{r^{\frac{2\alpha_n}{\delta}-\frac{\beta(2-\delta)}{\delta}}r^{N-1}}{\lr1+r\rr^{\frac{pN+p\al_n-2N+2p}{p-1}}}\md r.
	\end{align*}
Since $\beta\in(\max\{\al_n,\frac{p\al_n-2\al)n-p}{p-1},\frac{p\al_n+2p-N}{p-1}\},\frac{p+p\al_n}{p-1}]$, taking $\delta\in\left(0,\frac{2(\beta-\al_n)}{\beta+\frac{N-p\al_n-2p}{p-1}}\right)$, we have $$\frac{pN+p\al_n-2N+2p}{p-1}-\lr\frac{2\alpha_n}{\delta}-\frac{\beta(2-\delta)}{\delta}+N-1\rr>1.$$
Therefore, we can take $\delta\in\left(0,\min\left\{\frac{2(\beta-\al_n)}{\beta+\frac{N-p\al_n-2p}{p-1}},2\right\}\right)$, then from \eqref{a438} and Young inequality,  we get
\begin{align}	
	\int_{B_{\frac{1}{\var_n}}}|x|^\beta U_{\al_n}^{p_{\al_n}^*-2}|w_n|^2\md x&\leq C+C\lr\int_{B_{\frac{1}{\var_n}}}|x|^\beta U_{\al_n}^{p_{\al_n}^*-2}|w_n|^2\md x\rr^{\frac{2-\delta}{2}}\nonumber\\
	&\leq C+\frac\delta2C^{\frac{2}{\delta}}+\frac{2-\delta}{2}\int_{B_{\frac{1}{\var_n}}}|x|^\beta U_{\al_n}^{p_{\al_n}^*-2}|w_n|^2\md x,\nonumber
\end{align}
and hence \eqref{aa438} holds. This concludes our proof of Proposition \ref{ppp43}.
\end{proof}

In order to overcome the difficulties caused by the unavailability of Kelvin type transforms and the absence of the Green function representation formula, based on \eqref{aa438}, through a De Giorgi-Moser-Nash iteration argument, we can finally establish the uniformly fast decay estimate \eqref{412} for $w_n$ in the following proposition, without using Kelvin type transforms and the Green function representation formula.
\begin{prop}\label{ppq43}
Assume $1<p<N$. Let $\al_n$ and $\var_n$ be sequences such that $\al_n\to\al>0$ and $\var_n\to0$ as $n\to+\infty$. Let $v_n$ be a sequence of nonradial solutions of \eqref{41} in $B_{\frac{1}{\var_n}}$ related to the exponent $\al_n$, and let $w_n:=\frac{u_n-v_n}{\|u_n-v_n\|_{L^{\infty}(\mathbb{R}^{N})}}$, where $u_n$ is the radial solution of \eqref{41} related to the exponent $\al_n$ given by \eqref{32}. If $v_n\to U_\alpha$ in $X$, then there exists a uniform constant $C_0>0$ independent of $n$ such that
\begin{align}\label{412}
		|w_n(y)|\leq\frac{C_0}{(1+|y|)^{\frac{N-p}{2(p-1)}}}, \qquad \forall \,\, y\in \mathbb{R}^{N}.
\end{align}
\end{prop}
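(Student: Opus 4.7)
The plan is to upgrade the weighted $L^2$-type bound \eqref{aa438} of Proposition \ref{ppp43} to the pointwise decay \eqref{412} via a De Giorgi-Moser-Nash iteration on dyadic annuli at infinity, with a rescaling chosen so that the final scaling factor yields exactly the exponent $\frac{N-p}{2(p-1)}$.

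First, I would write the linearized equation satisfied by $w_n$. Subtracting the equations for $u_n$ and $v_n$ and dividing by $\|u_n - v_n\|_\infty$ gives
\begin{equation*}
-\mathrm{div}(\mathcal{A}_n(x)\nabla w_n) = |x|^{\alpha_n}\xi_n(x)\, w_n \qquad \text{in } B_{1/\varepsilon_n},
\end{equation*}
where $\mathcal{A}_n(x) := \int_0^1 |t\nabla u_n + (1-t)\nabla v_n|^{p-2}\bigl[I + (p-2)\,e_t\otimes e_t\bigr]\,dt$, with $e_t$ the unit vector in the direction of $t\nabla u_n + (1-t)\nabla v_n$, and $\xi_n$ is as in Proposition \ref{ppp43}. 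Using the sharp gradient asymptotics \eqref{eq0806+}, the convergence $v_n\to U_\alpha$ in $X$, and Proposition \ref{pro43}, on each dyadic annulus $\{R \le |x| \le 4R\}$ with $R$ large and $4R < 1/\varepsilon_n$, the eigenvalues of $\mathcal{A}_n$ are two-sidedly comparable to $R^{-(N-1)(p-2)/(p-1)}$ and $|x|^{\alpha_n}\xi_n(x)$ is bounded above by $C R^{\alpha_n - (Np+p\alpha_n-2N+2p)/(p-1)}$, uniformly in $n$.

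Then I would rescale by setting $W(y) := w_n(Ry)$ on $\{1/2\le|y|\le 2\}$; dividing the resulting equation by $R^{-(N-1)(p-2)/(p-1)}$ produces
\begin{equation*}
-\mathrm{div}_y(\widehat{\mathcal{A}}_n(y)\nabla_y W) = c_n(y)\,W,\qquad 1/2\le|y|\le 2,
\end{equation*}
with $\widehat{\mathcal{A}}_n$ uniformly elliptic in some interval $[\lambda_0,\Lambda_0]$ independent of $n$ and $R$, and with $|c_n(y)|\le C R^{-(p+\alpha_n)/(p-1)}\le C$; the key identity $2 + (N-1)(p-2)/(p-1) - [(Np+p\alpha_n-2N+2p)/(p-1) - \alpha_n] = -(p+\alpha_n)/(p-1) < 0$ forces the zero-order coefficient to be small at large $R$. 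Since $|W|\le 1$, the classical De Giorgi-Moser-Nash $L^\infty$--$L^2$ estimate on the unit annulus gives $\sup_{1\le|y|\le 3/2}|W|^2 \le C\int_{1/2\le|y|\le 2}|W|^2\,dy$ with $C$ independent of $n,R$, which upon changing variables becomes
\begin{equation*}
\sup_{R\le|x|\le 3R/2}|w_n|^2 \le C R^{-N}\int_{R/2\le|x|\le 2R}|w_n|^2\,dx.
\end{equation*}

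To absorb the weight, I would choose $\beta = (p+p\alpha_n)/(p-1)$ in Proposition \ref{ppp43}, which lies in the admissible range since $N>p$. On the shell $R/2\le|x|\le 2R$ one has $|x|^\beta U_{\alpha_n}^{p^*_{\alpha_n}-2}\sim R^{\beta - (Np+p\alpha_n-2N+2p)/(p-1)}$, so inserting the weight and invoking \eqref{aa438} yields
\begin{equation*}
\sup_{R\le|x|\le 3R/2}|w_n|^2 \le C R^{-N-\beta + (Np+p\alpha_n-2N+2p)/(p-1)} = C R^{-(N-p)/(p-1)},
\end{equation*}
the arithmetic $-N-\beta+(Np+p\alpha_n-2N+2p)/(p-1) = -(N-p)/(p-1)$ being precisely why this $\beta$ was selected. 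Taking square roots and combining with the trivial bound $|w_n|\le 1$ on $\{|x|\le 4\}$ via a dyadic covering of $B_{1/\varepsilon_n}$ gives \eqref{412}. The main obstacle is making the Moser iteration fully rigorous for the degenerate/singular matrix $\mathcal{A}_n$: one must use the standard monotonicity inequality $(|a|^{p-2}a - |b|^{p-2}b)\cdot(a-b)\ge c(|a|+|b|)^{p-2}|a-b|^2$ for $p\ge 2$ (with a singular analogue for $1<p<2$) together with two-sided control of $|\nabla u_n|+|\nabla v_n|$ on each shell to secure the ellipticity of $\mathcal{A}_n$ claimed in the first step; this is precisely where Kelvin-type transforms and the Green function representation are truly replaced by a purely energy-based argument.
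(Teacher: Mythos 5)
Your overall strategy matches the paper's: use the weighted $L^2$ bound from Proposition \ref{ppp43} with $\beta=\frac{p+p\alpha_n}{p-1}$, exploit the two-sided ellipticity of the linearized coefficient matrix (via the decay of $\nabla u_n$ and Proposition \ref{pro43}), and then run a De~Giorgi--Moser--Nash iteration on dyadic annuli; your final arithmetic $-N-\beta+\frac{Np+p\alpha_n-2N+2p}{p-1}=-\frac{N-p}{p-1}$ is exactly the computation that gives the exponent. The rescaling $W(y)=w_n(Ry)$ and division by $R^{-(N-1)(p-2)/(p-1)}$ to land on a uniformly elliptic equation on a fixed annulus, followed by the classical $L^\infty$--$L^2$ estimate, is a clean repackaging of what the paper does directly with weighted test functions; the identity $2+\frac{(N-1)(p-2)}{p-1}-\bigl(\frac{Np+p\alpha_n-2N+2p}{p-1}-\alpha_n\bigr)=-\frac{p+\alpha_n}{p-1}$ is the same bookkeeping, so I regard these as the same proof.

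There is, however, a genuine gap in the covering step. Your iteration runs on annuli $\{R\le|x|\le 4R\}$ with the constraint $4R<\frac{1}{\varepsilon_n}$, and after shrinking you only control $w_n$ on a slightly smaller shell, say $\{R\le|x|\le\frac{3R}{2}\}$. Chasing the dyadic covering shows this reaches at most up to $|y|\lesssim\frac{3}{8\varepsilon_n}$, leaving the annular region $\frac{3}{8\varepsilon_n}\lesssim|y|<\frac{1}{\varepsilon_n}$ uncovered. This region cannot be treated by the same interior argument, because the balls $B_{2R}(0)$ with $R\sim|y|$ stick out of $B_{1/\varepsilon_n}$, where $w_n$ is no longer a solution of the elliptic equation. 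You need a boundary version of the estimate: e.g.\ take $R=\frac{1}{8\varepsilon_n}$ and balls $B_{4R}(z_n)$ centered at boundary points $z_n=\frac{1}{\varepsilon_n}\frac{y}{|y|}$, work on $B_{4R}(z_n)\cap B_{1/\varepsilon_n}$, and use the fact that $w_n=0$ on $\partial B_{1/\varepsilon_n}$ to justify choosing cut-offs that need not vanish on the spherical boundary (equivalently, extend $|w_n|$ by zero across $\partial B_{1/\varepsilon_n}$ as a subsolution). This is exactly the paper's Case~2, and it produces the same bound $|w_n(y)|\le C\varepsilon_n^{(N-p)/(2(p-1))}\le C_0|y|^{-(N-p)/(2(p-1))}$. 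Without this boundary case the asserted estimate is not proved on the full range of $y$, and the lower bound $\|u_n-v_n\|_\infty\ge C$ in Proposition \ref{pp44} (which hinges on the maximum point of $|w_n|$ staying in a bounded region) would not follow.
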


\begin{proof}
We will carry out our proof by discussing the following two different cases.	
	
\medskip

$\bullet$ Case 1: $1\leq |y|<\frac{1}{2\var_n}$. \\
From $w_n=0$ on $x\in\partial B_{\frac{1}{\var_n}}$, by taking $\al=\al_n$ and $\beta=\frac{p+p\al_n}{p-1}$ in Proposition \ref{pp43}, we have
	\begin{align*}	
		\int_{B_{\frac{1}{\var_n}}}|x|^{\frac{p+p\al_n}{p-1}} U_{\al_n}^{p_{\al_n}^*-2}|w_n|^2\md x&\leq C.
	\end{align*}
Therefore, taking $R=\frac{5}{12}|y|$, we have
\begin{align}\label{aaq427}
y\in  B_{3R}\setminus B_{2R}\subset B_{4R}\setminus B_{R}\subset B_{\frac{1}{\var_n}},
\end{align}
and
\begin{align}	\label{aaa427q}
	\int_{B_{4R}\setminus B_{R}}|w_n|^2\md x&\leq CR^{\frac{Np+p-2N}{p-1}}.
\end{align}

Define $f(x)=|x|^{p-2}x$, $f_j=|x|^{p-2}x_j$, thus
	$$(f_i)_{x_j}=(p-2)|x|^{p-4}x_ix_j+|x|^{p-2}\delta_{ij}:=A_{ij},$$	
and
	\begin{align}
		\sum_{i,j=1}^{N}A_{ij}\xi_i\xi_j&=\sum_{i,j=1}^{N}\lr(p-2)|x|^{p-4}x_ix_j+|x|^{p-2}\delta_{i,j}\rr\xi_i\xi_j\label{432qq}\\
		&=\sum_{i,j=1}^{N}(p-2)|x|^{p-4}x_i\xi_ix_j\xi_j+|x|^{p-2}|\xi|^2\nonumber\\
		&=|x|^{p-2}|\xi|^2\lr1+(p-2)\left|\frac{x}{|x|}\cdot\frac{\xi}{|\xi|}\right|^2\rr\nonumber\\
		&\geq
		\begin{cases}
			|x|^{p-2}|\xi|^2,\quad & p>2,\nonumber\\
			(p-1)|x|^{p-2}|\xi|^2,\quad &1<p\leq2.
		\end{cases}
	\end{align}
	Note that
	\begin{align*}
		&\quad|\nabla u_n|^{p-2}\nabla u_n-|\nabla v_n|^{p-2}\nabla v_n\\
		&=\int_0^1(p-2)|t\nabla u_n+(1-t)\nabla v_n|^{p-4}(tu_n+(1-t)v_n)_{x_i}
		(tu_n+(1-t)v_n)_{x_j}\\
		&\quad+|t\nabla u_n+(1-t)\nabla v_n|^{p-2}\delta_{ij}\md t \, (u_n-v_n)_{x_i},
	\end{align*}
	let us define
	\begin{align*}
		a_{ij}^n(x):&=\int_0^1(p-2)|t\nabla u_n+(1-t)\nabla v_n|^{p-4}(t u_n+(1-t) v_n)_{x_i}
		(tu_n+(1-t) v_n)_{x_j}\\
		&\quad+|t\nabla u_n+(1-t)\nabla v_n|^{p-2}\delta_{i,j}\md t.
	\end{align*}
	Then
	\begin{align*}
		-\text{div}(|\nabla u_n|^{p-2}\nabla u_n-|\nabla v_n|^{p-2}\nabla v_n  )=|x|^{\al_n}\lr(u_n+\beta_{\al_n}(n))^{p_{\al_n}^*-1}-
		(v_n+\beta_{\al_n}(n))^{p_{\al_n}^*-1}\rr,
	\end{align*}
	it is equal to
	\begin{align*}
		-\lr a_{ij}^n(x)(u_n-v_n)_{x_j}\rr_{x_i}=|x|^{\al_n}\lr(u_n+\beta_{\al_n}(n))^{p_{\al_n}^*-1}-
		(v_n+\beta_{\al_n}(n))^{p_{\al_n}^*-1}\rr.
	\end{align*}	
	From the definition of $w_n$, we get	
	\begin{align*}
		-\lr a_{i,j}^n(x)(w_n)_{x_j}\rr_{x_i}&=|x|^{\al_n}\int_0^1(p_{\al_n}^*-1)\lr tu_n+(1-t)v_n+\beta_{\al_n}(n)\rr^{p_{\al_n}^*-2}\md t \frac{u_n-v_n}{\|u_n-v_n\|_{L^{\infty}(\mathbb{R}^{N})}}\\
		&=:|x|^{\al_n}\zeta_n(x)w_n,
	\end{align*}
where
$$\zeta_n(x):=\int_0^1(p_{\al_n}^*-1)\lr tu_n+(1-t)v_n+\beta_{\al_n}(n)\rr^{p_{\al_n}^*-2}\md t.$$	
Thus $w_n$ satisfies the equation
	\begin{align*}
		\begin{cases}
			-\lr a_{ij}^n(x)(w_n)_{x_j}\rr_{x_i}=|x|^{\al_n}\zeta_n(x)w_n,  &x\in B_{\frac{1}{\var_n}},\\
			w_n(x)=0,  &x\in\partial B_{\frac{1}{\var_n}}.
		\end{cases}
	\end{align*}	
	On the one hand,
	$$|a_{ij}^n(x)(w_n)_{x_j}|\leq(p-1)\int_0^1|(1-t)\nabla u_n+t\nabla v_n|^{p-2}\md t |\nabla w_n|.$$
	On the other hand, from \eqref{432qq}, we have
	\begin{align*}
		a_{ij}^n(x)(w_n)_{x_j}(w_n)_{x_i}
		\geq
		\begin{cases}
			\displaystyle \int_0^1|(1-t)\nabla u_n+t\nabla v_n|^{p-2}\md t |\nabla w_n|^2,& p\geq2,\\
			\displaystyle (p-1)\int_0^1|(1-t)\nabla u_n+t\nabla v_n|^{p-2}\md t |\nabla w_n|^2, &1<p<2.
		\end{cases}
	\end{align*}
	Let
	$$\eta_n(x):=\int_0^1|(1-t)\nabla u_n+t\nabla v_n|^{p-2}\md t.$$
We have the following estimates on $\eta_n(x)$ and $\zeta_n(x)$:
\begin{align}\label{435}
	\eta_n(x)
	\geq
	\begin{cases}
		C(p) |\nabla u_n|^{p-2},& p\geq2,\\
			\lr|\nabla u_n|+|\nabla v_n|\rr^{p-2}, &1<p<2,
	\end{cases}
\end{align}
\begin{align}\label{436}
	\eta_n(x)
	\leq
	\begin{cases}
	\lr|\nabla u_n|+|\nabla v_n|\rr^{p-2},& p\geq2,\\
		C(p) |\nabla u_n|^{p-2}, &1<p<2,
	\end{cases}
\end{align}
and
\begin{align}\label{437}
	\zeta_n(x)\leq
	\begin{cases}
		C|u_n+\beta_{\al_n}(n)+v_n|^{p_{\al_n}^*-2},& p\geq\frac{2N}{N+2+\al_n},\\
		C|u_n+\beta_{\al_n}(n)|^{p_{\al_n}^*-2}, &1<p<\frac{2N}{N+2+\al_n}.
	\end{cases}
\end{align}
In fact, if $p\in[2,N)$ and $|\nabla u_n|\geq|\nabla(v_n-u_n)|$, we have
	\begin{align*}
		\eta_n(x)&\geq\int_0^1\Big||\nabla u_n|-t|\nabla (v_n-u_n)|\Big|^{p-2}\md t\\&\geq\int_0^{\frac12}\lr|\nabla u_n|-t|\nabla (v_n-u_n)|\rr^{p-2}\md t\\&\geq\int_0^{\frac12}\lr|\nabla u_n|-\frac12|\nabla (v_n-u_n)|\rr^{p-2}\md t\\&\geq\left(\frac{1}{2}\right)^{p-1}|\nabla u_n|^{p-2},
	\end{align*}
	and if $|\nabla u_n|<|\nabla(v_n-u_n)|$, we obtain
	\begin{align*}
	\eta_n(x)&\geq\int_0^1\Big||\nabla u_n|-t|\nabla (v_n-u_n)|\Big|^{p-2}\md t\\
	&=\frac{1}{p-1}\frac{|\nabla u_n|^{p-1}+(|\nabla(v_n-u_n)|-|\nabla u_n|)^{p-1}}{|\nabla(v_n-u_n)|}\\
	&\geq\frac{2^{2-p}}{p-1}|\nabla(v_n-u_n)|^{p-2}\\
	&\geq\frac{2^{2-p}}{p-1}|\nabla u_n|^{p-2}.
	\end{align*}
If $p\in(1,2)$, we get
\begin{align*}
	\eta_n(x)&=\int_0^1|(1-t)\nabla u_n+t\nabla v_n|^{p-2}\md t\\
	&\geq\lr|\nabla u_n|+|\nabla v_n|\rr^{p-2}.
\end{align*}
Thus \eqref{435} holds true. If $p\in[2,N)$, we have
\begin{align*}
	\eta_n(x)&=\int_0^1|(1-t)\nabla u_n+t\nabla v_n|^{p-2}\md t\\
	&\leq\lr|\nabla u_n|+|\nabla v_n|\rr^{p-2}.
\end{align*}
If $p\in(1,2)$ and $|\nabla u_n|\geq|\nabla(v_n-u_n)|$, we obtain
\begin{align*}
\eta_n(x)&=\int_0^1|\nabla u_n+t\nabla (v_n-u_n)|^{p-2}\md t\\
&\leq\int_0^1\Big||\nabla u_n|-t|\nabla (v_n-u_n)|\Big|^{p-2}\md t\\&=\frac{1}{p-1}\frac{|\nabla u_n|^{p-1}-(|\nabla u_n|-|\nabla(v_n-u_n)|)^{p-1}}{|\nabla(v_n-u_n)|}\\
&\leq|\nabla u_n|^{p-2},
\end{align*}
if $|\nabla u_n|<|\nabla(v_n-u_n)|$, then
\begin{align*}
	\eta_n(x)&\leq\int_0^1\Big||\nabla u_n|-t|\nabla (v_n-u_n)|\Big|^{p-2}\md t\\&=\frac{1}{p-1}\frac{|\nabla u_n|^{p-1}+(|\nabla(v_n-u_n)|-|\nabla u_n|)^{p-1}}{|\nabla(v_n-u_n)|}\\
	&\leq\frac{2^{2-p}}{p-1}|\nabla u_n|^{p-2}.
\end{align*}
Thus \eqref{436} holds true. The estimate \eqref{437} follows from \eqref{436} directly. Now, we take
	\begin{align*}
		F(\overline w_n)
		=\begin{cases}
			\overline w_n^q,& \overline w_n\leq l,\\
			q l^{q-1}\overline w_n-(q-1) l^q, &\overline w_n> l
		\end{cases}
	\end{align*}
	for some $q,l\in\R^+$, and
	\begin{align*}
		G(w_n)
		=\text{sgn}(w_n)F(\overline w_n)F'(\overline w_n),
	\end{align*}
	where $\overline w_n:=|w_n|$. Thus
	\begin{align*}
		F'(\overline w_n)
		=\begin{cases}
			q\overline w_n^{q-1},& \overline w_n\leq l,\\
			q l^{q-1}, &\overline w_n> l
		\end{cases}
	\end{align*}
	and
	\begin{align*}
		G'(w_n)
		=\begin{cases}
			q^{-1}(2q-1)\lr F'(\overline w_n)\rr^2,& \overline w_n\leq l,\\
			\lr F'(\overline w_n)\rr^2, &\overline w_n> l.
		\end{cases}
	\end{align*}

Since $B_{4R}\subset B_{\frac{1}{\var_n}}$, for any given $\delta\in\left(0,\frac{R}{2}\right)$, we can choose cut-off function $\varphi(x)$ satisfying $0\leq\varphi(x)\leq1$, $\varphi(x)=1$ in $B_{4R-\delta}\setminus B_{R+\delta}$, $\varphi(x)=0$ in $(\R^N\setminus B_{4R})\cup B_{R}$, $0<\varphi(x)<1$ in $(B_{R+\delta}\setminus B_{R})\cup (B_{4R}\setminus B_{4R-\delta})$ and $|\nabla\varphi|\leq\frac{C}{\delta}$. Let $$\psi_n(x):=\varphi^2(x)G(w_n),$$
	then
	$$\lr\psi_n(x)\rr_{x_i}=2\varphi\varphi_{x_i}G(w_n)+\varphi^2G'(w_n)\lr w_n\rr_{x_i}.$$	
We have
	\begin{align}
		&\quad a_{ij}^n(x)(w_n)_{x_j}(\psi_n)_{x_i}-|x|^{\al_n}\zeta_n(x)w_n\psi_n\label{423yy}\\
		&=a_{ij}^n(x)(w_n)_{x_j}\lr2\varphi\varphi_{x_i}G(w_n)+\varphi^2G'(w_n)\lr w_n\rr_{x_i}\rr-|x|^{\al_n}\zeta_n(x)w_n\varphi^2(x)G(w_n)\nonumber\\
		&\geq \varphi^2G'(w_n)a_{ij}^n(x)(w_n)_{x_j}(w_n)_{x_i}-2\varphi|\nabla\varphi||G(w_n)||(p-1)\eta_n(x)||\nabla w_n|\nonumber\\
		&\quad-|x|^{\al_n}\zeta_n(x)|w_n|\varphi^2(x)|G(w_n)|\nonumber\\
		&\geq C(p)\varphi^2|F'(\overline w_n)|^2|\eta_n(x)||\nabla w_n|^2-2\varphi|\nabla\varphi||F(\overline w_n)||F'(\overline w_n)||(p-1)\eta_n(x)||\nabla w_n|\nonumber\\
		&\quad-|x|^{\al_n}\zeta_n(x)|w_n|\varphi^2(x)|F(\overline w_n)||F'(\overline w_n)|\nonumber\\
		&= C(p)\varphi^2|\eta_n(x)||F'(\overline w_n)\nabla \overline w_n|^2-2(p-1)|\nabla\varphi F(\overline w_n)||\varphi F'(\overline w_n)\eta_n(x)\nabla \overline w_n|\nonumber\\
		&\quad-|x|^{\al_n}\zeta_n(x)|\varphi F(\overline w_n)||\varphi F'(\overline w_n)\overline w_n|\nonumber.
	\end{align}	
	Let $z_n=F(\overline w_n)$, and note that $\overline w_nF'(\overline w_n)\leq qF(\overline w_n)$. Since
	\begin{align*}
		\int_{B_{4R}\setminus B_{R}} a_{ij}^n(x)(w_n)_{x_j}(\psi_n)_{x_i}-|x|^{\al_n}\zeta_n(x)w_n\psi_n(x)\md x=0,
	\end{align*}	
	from \eqref{423yy}, we obtain	
	\begin{align*}
		&\quad \int_{B_{4R}\setminus B_{R}} |\eta_n(x)||\varphi\nabla z_n|^2\md x\\&\leq	C\int_{B_{4R}\setminus B_{R}}|\nabla\varphi \cdot z_n||\varphi\eta_n\cdot\nabla z_n|+C|x|^{\alpha_n}\zeta_n(x)|\varphi z_n|^2\md x\\
		&\leq  C\lr	\int_{B_{4R}\setminus B_{R}}|\eta_n(x)||\nabla\varphi|^2 | z_n|^2\md x\rr^{\frac12}\lr	\int_{B_{4R}\setminus B_{R}}|\eta_n(x)||\varphi|^2|\nabla z_n|^2\md x\rr^{\frac12}\\
		&\quad+C\int_{B_{4R}\setminus B_{R}}|x|^{\alpha_n}\zeta_n(x)|\varphi z_n|^2\md x.
	\end{align*}	
Thus, we have	
\begin{align*}
	&\quad \int_{B_{4R}\setminus B_{R}} |\eta_n(x)||\varphi\nabla z_n|^2\md x\\
	&\leq  C\int_{B_{4R}\setminus B_{R}}|\eta_n(x)||\nabla\varphi|^2 | z_n|^2\md x+C\int_{B_{4R}\setminus B_{R}}|x|^{\alpha_n}\zeta_n(x)|\varphi z_n|^2\md x.
\end{align*}	
	From Propositions \ref{pp41} and \ref{pro43}, \eqref{435}, \eqref{436} and \eqref{437}, we have
	\begin{align*}
\int_{B_{4R}\setminus B_{R}} |\eta_n(x)||\varphi\nabla z_n|^2\md x\geq \frac{C}{R^{\frac{(N-1)(p-2)}{p-1}}}\int_{B_{4R}\setminus B_{R}} |\varphi\nabla z_n|^2\md x
	\end{align*}		
and
\begin{align*}
	&\quad\int_{B_{4R}\setminus B_{R}}|\eta_n||\nabla\varphi|^2 | z_n|^2\md x+C\int_{B_{4R}\setminus B_{R}}|x|^{\alpha_n}\zeta_n(x)|\varphi z_n|^2\md x\\
	&\leq\frac{C}{R^{\frac{(N-1)(p-2)}{p-1}}}\int_{B_{4R}\setminus B_{R}}|\nabla\varphi|^2 | z_n|^2\md x+\frac{C}{R^{\frac{Np-2N+2p+\al_n}{p-1}}}\int_{B_{4R}\setminus B_{R}}|\varphi z_n|^2\md x.
\end{align*}		
	Furthermore, we get
	\begin{align}
		&\quad\int_{B_{4R}\setminus B_{R}} |\varphi\nabla z_n|^2\md x\label{424rr}\\
		&\leq C	\int_{B_{4R}\setminus B_{R}}|\nabla\varphi|^2 | z_n|^2\md x+\frac{C}{R^{\frac{3p+\al_n-2}{p-1}}}\int_{B_{4R}\setminus B_{R}}|\varphi z_n|^2\md x.\nonumber
	\end{align}	
	From the Sobolev embedding inequality, H\"older inequality and $|\varphi|\leq1$, we have
	\begin{align*}
		\lr\int_{B_{4R}\setminus B_{R}} |\varphi z_n|^{\frac{2N}{N-2}}\md x\rr^{\frac{N-2}{N}}&\leq C\int_{B_{4R}\setminus B_{R}} |\varphi\nabla z_n|^2\md x+C	\int_{B_{4R}\setminus B_{R}}|\nabla\varphi|^2 | z_n|^2\md x\\
		&\leq C\lr\frac{1}{\delta^2}+\frac{1}{R^{\frac{3p+\al_n-2}{p-1}}}\rr	\int_{B_{4R}\setminus B_{R}} | z_n|^2\md x.
	\end{align*}	
	Thus
	\begin{align*}
		\lr\int_{B_{4R-\delta}\setminus B_{R+\delta}} | z_n|^{\frac{2N}{N-2}}\md x\rr^{\frac{N-2}{N}}
		\leq C\lr\frac{1}{\delta^2}+\frac{1}{R^{\frac{3p+\al_n-2}{p-1}}}\rr	\int_{B_{4R}\setminus B_{R}} | z_n|^2\md x
	\end{align*}	
and
	\begin{align*}
		\lr\int_{B_{4R-\delta}\setminus B_{R+\delta}} | \overline w_n|^{q2^*}\md x\rr^{\frac{1}{q2^*}}\leq \lr\frac{C}{\delta^2}\rr	^{\frac{1}{2q}}\lr\int_{B_{4R}\setminus B_{R}} | \overline w_n|^{2q}\md x\rr^{\frac{1}{2q}}.
	\end{align*}	
	Let
	$$\gamma=2q, \,\,\,\, \chi=\frac{N}{N-2},\,\,\,\, \gamma_i=\chi^i\gamma,\,\,\,\, h_i=\sum_{j=0}^{i}\frac{R}{2^{j+1}} \,\,\,\, \text{for}\ i=0,1,2,\cdots,$$
	then, we get
	\begin{align*}
		&\quad\lr\int_{B_{4R-h_{i+1}}\setminus B_{R+h_{i+1}}} | \overline w_n|^{\gamma_{i+1}}\md x\rr^{\frac{1}{\gamma_{i+1}}}\\&\leq\lr\frac{1}{(h_{i+1}-h_{i})^2}\rr	^{\frac{1}{\gamma_{i}}}\lr\int_{B_{4R-h_{i}}\setminus B_{R+h_{i}}} | \overline w_n|^{\gamma_{i}}\md x\rr^{\frac{1}{\gamma_{i}}}\\
		&\leq \lr\frac{1}{(\frac{R}{2^{i+1}})^2}\rr	^{\frac{1}{\gamma_{i}}}\lr\int_{B_{4R-h_{i}}\setminus B_{R+h_{i}}} | \overline w_n|^{\gamma_{i}}\md x\rr^{\frac{1}{\gamma_{i}}}\\
		&\leq \frac{4^{\sum^{i}_{k=0}\frac{k+1}{\gamma_k}}}{R^{\sum^{i}_{k=0}\frac{2}{\gamma_k}}}\lr\int_{B_{\frac{7}{2}R}\setminus B_{\frac32R}} | \overline w_n|^{\gamma_{0}}\md x\rr^{\frac{1}{\gamma_{0}}}.
	\end{align*}	
	Taking $q=1$, and letting $i\to+\infty$, from \eqref{aaa427q}, we get	
	\begin{align}\label{425hh}
		|w_n(y)|\leq \|\overline w_n\|_{L^\infty_{(B_{3R}\setminus B_{2R})}}&\leq\frac{C}{R^{\frac{N}{2}}}\lr\int_{B_{\frac{7}{2}R}\setminus B_{\frac32R}} | \overline w_n|^{2}\md x\rr^{\frac{1}{2}}\\
		&\leq CR^{-\frac N 2+\frac12\frac{Np+p-2N}{p-1}}\nonumber\\
		&\leq CR^{-\frac{N-p}{2(p-1)}}\leq C_0|y|^{-\frac{N-p}{2(p-1)}}.\nonumber
	\end{align}	

\medskip	

$\bullet$ Case 2: $\frac{1}{2\var_n}\leq|y|<\frac{1}{\var_n}$.\\
In this case, noting $w_n=0$ on $x\in\partial B_{\frac{1}{\var_n}}$, by taking $\beta=\frac{p+p\al_n}{p-1}$ in Proposition \ref{pp43}, we get
\begin{align*}	
	\int_{B_{\frac{1}{\var_n}}}|x|^{\frac{p+p\al_n}{p-1}} U_{\al_n}^{p_{\al_n}^*-2}|w_n|^2\md x&\leq
	C.
\end{align*}
Therefore, taking $z_n:=\frac{1}{\var_n}\frac{y}{|y|}$ and $R=\frac{1}{8\var_n}\in\Big(\frac{|y|}{8},\frac{|y|}{4}\Big]$, we have
\begin{align}\label{qqq429}
	y\in  \left(B_{4R}(z_n)\cap B_{\frac{1}{\var_n}}\right)\subset B_{\frac{1}{\var_n}}
\end{align}
and
\begin{align}	\label{aaa427}
	\int_{B_{4R}(z_n)\cap B_{\frac{1}{\var_n}}}|w_n|^2\md x&\leq
	C\lr\frac{1}{\var_n}\rr^{\frac{Np+p-2N}{p-1}}.
\end{align}

Since $(B_{4R}(z_n)\cap B_{\frac{1}{\var_n}})\subset B_{\frac{1}{\var_n}}$, for any given $\delta\in\left(0,\frac{R}{2}\right)$, we can choose cut-off function $\varphi(x)$ satisfying $0\leq\varphi(x)\leq1$, $\varphi(x)=1$ in $B_{4R-\delta}(z_n)$, $\varphi(x)=0$ in $\R^N\setminus B_{4R}(z_n)$, $0<\varphi(x)<1$ in $B_{4R}(z_n)\setminus B_{4R-\delta}(z_n)$ and $|\nabla\varphi|\leq\frac{C}{\delta}$.
By \eqref{423yy}, we get
\begin{align*}
	&\quad \int_{B_{4R}(z_n)\cap B_{\frac{1}{\var_n}}} |\eta_n(x)|\varphi\nabla z_n|^2\md x\\&\leq	C\int_{B_{4R}(z_n)\cap B_{\frac{1}{\var_n}}}|\nabla\varphi \cdot z_n||\varphi\eta_n\cdot\nabla z_n|+C|x|^{\alpha_n}\zeta_n(x)|\varphi z_n|^2\md x\\
	&\leq  C\lr	\int_{B_{4R}(z_n)\cap B_{\frac{1}{\var_n}}}|\eta_n||\nabla\varphi|^2 | z_n|^2\md x\rr^{\frac12}\lr	\int_{B_{4R}(z_n)\cap B_{\frac{1}{\var_n}}}|\eta_n||\varphi|^2|\nabla z_n|^2\md x\rr^{\frac12}\\
	&\quad+C\int_{B_{4R}(z_n)\cap B_{\frac{1}{\var_n}}}|x|^{\alpha_n}\zeta_n(x)|\varphi z_n|^2\md x.
\end{align*}		
Then, we have	
\begin{align*}
	&\quad \int_{B_{4R}(z_n)\cap B_{\frac{1}{\var_n}}} |\eta_n(x)||\varphi\nabla z_n|^2\md x\\
	&\leq  C\int_{B_{4R}(z_n)\cap B_{\frac{1}{\var_n}}}|\eta_n(x)||\nabla\varphi|^2 | z_n|^2\md x+C\int_{B_{4R}(z_n)\cap B_{\frac{1}{\var_n}}}|x|^{\alpha_n}\zeta_n(x)|\varphi z_n|^2\md x.
\end{align*}	
From Proposition \ref{pp41}, \eqref{435}, \eqref{436} and \eqref{437}, we have
\begin{align*}
	\int_{B_{4R}(z_n)\cap B_{\frac{1}{\var_n}}} |\eta_n(x)||\varphi\nabla z_n|^2\md x\geq C\var_n^{\frac{(N-1)(p-2)}{p-1}}\int_{B_{4R}
		(z_n)\cap B_{\frac{1}{\var_n}}} |\varphi\nabla z_n|^2\md x
\end{align*}		
and
\begin{align*}
	&\quad\int_{B_{4R}(z_n)\cap B_{\frac{1}{\var_n}}}|\eta_n||\nabla\varphi|^2 | z_n|^2\md x+C\int_{B_{4R}(z_n)\cap B_{\frac{1}{\var_n}}}|x|^{\alpha_n}\zeta_n(x)|\varphi z_n|^2\md x\\
	&\leq C\var_n^{\frac{(N-1)(p-2)}{p-1}}\int_{B_{4R}(z_n)\cap B_{\frac{1}{\var_n}}}|\nabla\varphi|^2 | z_n|^2\md x+C\var_n^{\frac{Np-2N+2p+\al_n}{p-1}}\int_{B_{4R}(z_n)\cap B_{\frac{1}{\var_n}}}|\varphi z_n|^2\md x.
\end{align*}			
Thus we get
\begin{align}\label{424rrq}
	&\quad\int_{B_{4R}(z_n)\cap B_{\frac{1}{\var_n}}} |\varphi\nabla z_n|^2\md x\\
	&\leq C	\int_{B_{4R}(z_n)\cap B_{\frac{1}{\var_n}}}|\nabla\varphi|^2 | z_n|^2\md x+C\var_n^{\frac{3p+\al_n-2}{p-1}}\int_{B_{4R}(z_n)\cap B_{\frac{1}{\var_n}}}|\varphi z_n|^2\md x.\nonumber
\end{align}	
From the Sobolev embedding inequality, H\"older inequality and $|z_n|\leq1$, we get
\begin{align*}
	\lr\int_{B_{4R}(z_n)\cap B_{\frac{1}{\var_n}}} |\varphi z_n|^{\frac{2N}{N-2}}\md x\rr^{\frac{N-2}{N}}&\leq C\int_{B_{4R}(z_n)\cap B_{\frac{1}{\var_n}}} |\varphi\nabla z_n|^2\md x+C	\int_{B_{4R}(z_n)\cap B_{\frac{1}{\var_n}}}|\nabla\varphi|^2 | z_n|^2\md x\\
	&\leq C\lr\frac{1}{\delta^2}+\var_n^{\frac{3p+\al_n-2}{p-1}}\rr	\int_{B_{4R}(z_n)\cap B_{\frac{1}{\var_n}}} | z_n|^2\md x.
\end{align*}	
Thus
\begin{align*}
	\lr\int_{B_{4R-\delta}(z_n)\cap B_{\frac{1}{\var_n}}} | z_n|^{\frac{2N}{N-2}}\md x\rr^{\frac{N-2}{N}}
	\leq C\lr\frac{1}{\delta^2}+\var_n^{\frac{3p+\al_n-2}{p-1}}\rr	\int_{B_{4R}(z_n)\cap B_{\frac{1}{\var_n}}} | z_n|^2\md x.
\end{align*}	
and
\begin{align*}
	\lr\int_{B_{4R-\delta}(z_n)\cap B_{\frac{1}{\var_n}}} | \overline w_n|^{q2^*}\md x\rr^{\frac{1}{q2^*}}\leq \lr\frac{C}{\delta^2}\rr	^{\frac{1}{2q}}\lr\int_{B_{4R}(z_n)\cap B_{\frac{1}{\var_n}}} | \overline w_n|^{2q}\md x\rr^{\frac{1}{2q}}.
\end{align*}	
Let
$$\gamma=2q, \ \chi=\frac{N}{N-2},\ \gamma_i=\chi^i\gamma,\ h_i=\sum_{j=0}^{i}\frac{R}{2^{j+1}}, \qquad \text{for}\ i=0,1,2,\cdots,$$
then, we derive
\begin{align*}
	&\quad\lr\int_{B_{4R-h_{i+1}}(z_n)\cap B_{\frac{1}{\var_n}}} | \overline w_n|^{\gamma_{i+1}}\md x\rr^{\frac{1}{\gamma_{i+1}}}\\&\leq\lr\frac{1}{(h_{i+1}-h_{i})^2}\rr^{\frac{1}{\gamma_{i}}}\lr\int_{B_{4R-h_i}(z_n)\cap B_{\frac{1}{\var_n}}} | \overline w_n|^{\gamma_{i}}\md x\rr^{\frac{1}{\gamma_{i}}}\\
	&\leq \lr\frac{1}{(\frac{R}{2^{i+1}})^2}\rr	^{\frac{1}{\gamma_{i}}}\lr\int_{B_{4R-h_i}(z_n)\cap B_{\frac{1}{\var_n}}} | \overline w_n|^{\gamma_{i}}\md x\rr^{\frac{1}{\gamma_{i}}}\\
	&\leq \frac{4^{\sum^{i}_{k=0}\frac{k+1}{\gamma_k}}}{R^{\sum^{i}_{k=0}\frac{2}{\gamma_k}}}\lr\int_{B_{\frac72R}(z_n)\cap B_{\frac{1}{\var_n}}} | \overline w_n|^{\gamma_{0}}\md x\rr^{\frac{1}{\gamma_{0}}}.
\end{align*}	
Taking $q=1$, and letting $i\to+\infty$, from \eqref{aaa427}, we can conclude that
	\begin{align}\label{425hhh}
	|w_n(y)|\leq \|\overline w_n\|_{L^\infty(B_{3R}(z_n)\cap B_{\frac{1}{\var_n}})}&\leq\frac{C}{R^{\frac{N}{2}}}\lr\int_{B_{\frac{7}{2}R}(z_n)\cap B_{\frac{1}{\var_n}}} | \overline w_n|^{2}\md x\rr^{\frac{1}{2}}\\
	&\leq CR^{-\frac N 2}\var_n^{\frac12\frac{Np+p-2N}{p-1}}\nonumber\\
	&\leq C\var_n^{\frac{N-p}{2(p-1)}}\leq C_0|y|^{-\frac{N-p}{2(p-1)}}.\nonumber
\end{align}	
This concludes our proof of Proposition \ref{ppq43}.
\end{proof}

Finally, by considering the problem satisfied by $w_n$ directly, using the Pohozaev identity on uniform bounded domains $\Omega_n$ such that $\Omega_n\rightrightarrows B_r(0)$ as $n\rightarrow+\infty$ with $r=(p-1)^{\frac{p-1}{p+\al}}$ and the nontrivial fast decay estimate \eqref{412}, we successfully overcame the nonlinear virtue of the $p$-Laplacian $\Delta_p$ and the absence of the Green function representation formula, and proved the uniform lower bound $\|u_n-v_n\|_\infty\geq C_0$ in the following proposition, which indicates that the limit of approximate solutions is non-radial solution and plays a quite crucial role in our proof of global bifurcation result in Section 5.
\begin{prop}\label{pp44}
	Let $\al_n$ and $\var_n$ be sequences such that $\al_n\to\al>0$ and $\var_n\to0$ as $n\to+\infty$.
	Let $v_n$ be a sequence of nonradial solutions of \eqref{41} in $B_{\frac{1}{\var_n}}$ related to the exponent $\al_n$. If $\al\ne\al(k)$ for all $k\in\N$, then there exists a uniform constant $C>0$ independent of $n$ such that
\begin{align}\label{413}
		\|u_n-v_n\|_{L^{\infty}(\mathbb{R}^{N})}\geq C,
\end{align}
where $u_n$ is the radial solution of \eqref{41} related to the exponent $\al_n$ given by \eqref{32}.
\end{prop}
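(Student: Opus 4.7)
The plan is a proof by contradiction based on compactness and a Pohozaev identity on shrinking nodal domains. Suppose, along a subsequence, $\|u_n - v_n\|_{L^\infty(\mathbb{R}^N)} \to 0$. Since $u_n \to U_\alpha$ in $X$ by Proposition \ref{pp32}, it follows that $v_n \to U_\alpha$ in $X$ as well. Setting
$$w_n := \frac{u_n - v_n}{\|u_n - v_n\|_{L^\infty(\mathbb{R}^N)}}, \qquad \|w_n\|_{L^\infty} = 1,$$
subtraction of the equations for $u_n$ and $v_n$ and the mean value theorem (as in the proof of Proposition \ref{ppq43}) show that $w_n$ satisfies the linear divergence-form equation
$$-\bigl(a_{ij}^n(x)(w_n)_{x_j}\bigr)_{x_i} = |x|^{\alpha_n}\zeta_n(x)\,w_n \text{ in } B_{1/\varepsilon_n}, \qquad w_n=0 \text{ on } \partial B_{1/\varepsilon_n},$$
with coefficients $a_{ij}^n, \zeta_n$ converging locally (away from the origin) to those of the linearized operator \eqref{16} at $U_\alpha$.

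The uniform decay \eqref{412} together with interior $C^{1,\eta}$ regularity gives precompactness of $\{w_n\}$ in $C^1_{\mathrm{loc}}(\mathbb{R}^N)$, and \eqref{412} ensures that the maximum $\|w_n\|_{L^\infty} = 1$ is attained in a uniformly bounded set. Along a subsequence, $w_n \to w$ in $C^1_{\mathrm{loc}}$ with $w$ bounded, nontrivial, and solving \eqref{16} on all of $\mathbb{R}^N$. Since $\alpha \neq \alpha(k)$ for every $k \in \mathbb{N}$, Theorem \ref{th11} forces $w = c\,Z$ for some $c \neq 0$, with $Z$ given by \eqref{17}; in particular $w$ vanishes transversally on the sphere $\{|x| = r\}$, $r = (p-1)^{(p-1)/(p+\alpha)}$.

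Because $\partial_r Z(r) \neq 0$ and $w_n \to c Z$ in $C^1_{\mathrm{loc}}$, the implicit function theorem yields smooth bounded domains $\Omega_n$ with $w_n|_{\partial\Omega_n} = 0$ and $\partial\Omega_n \to \partial B_r(0)$ in $C^1$, so $\Omega_n \rightrightarrows B_r(0)$. Multiplying the $w_n$-equation by $x\cdot\nabla w_n$ and integrating by parts on $\Omega_n$ gives the Pohozaev-type identity
\begin{align*}
&\tfrac{1}{2}\int_{\partial\Omega_n}(x\cdot\nu)\,a_{ij}^n\nu_i\nu_j\,(\partial_\nu w_n)^2\,d\sigma + \bigl(\tfrac{N}{2}-1\bigr)\int_{\Omega_n} a_{ij}^n (w_n)_{x_i}(w_n)_{x_j}\,dx \\
&\quad + \tfrac{1}{2}\int_{\Omega_n}(x\cdot\nabla_x a_{ij}^n)(w_n)_{x_i}(w_n)_{x_j}\,dx \\
&\quad = \tfrac{N}{2}\int_{\Omega_n}|x|^{\alpha_n}\zeta_n w_n^2\,dx + \tfrac{1}{2}\int_{\Omega_n}\bigl(x\cdot\nabla(|x|^{\alpha_n}\zeta_n)\bigr)w_n^2\,dx.
\end{align*}

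The main obstacle is to convert this into a contradiction, and the intended device is the spherical harmonic decomposition. Nonradiality of $v_n$ forces at least one mode $k_0 \geq 1$ in the expansion $w_n = \sum_{k\geq 0} w_{n,k}(r)\Phi_k(\theta)$ with $w_{n,k_0} \not\equiv 0$; passing to a subsequence, fix $k_0$. Renormalizing the projection $w_{n,k_0}\Phi_{k_0}$ by its own $L^\infty$ norm and running again the weighted Sobolev inequality \eqref{aa436}, the De Giorgi-Moser-Nash iteration of Proposition \ref{ppq43} and the gradient estimate of Proposition \ref{pro43}, one obtains in the limit a nontrivial bounded global solution of \eqref{16} supported in the $k_0$-th spherical mode, which is forbidden by Theorem \ref{th11} precisely because $\alpha \neq \alpha(k_0)$. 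The Pohozaev identity on $\Omega_n \rightrightarrows B_r(0)$ is what certifies that this limit is nontrivial: its boundary integral over $\partial\Omega_n$, controlled uniformly by the $C^1$-convergence $\partial\Omega_n \to \partial B_r(0)$, Proposition \ref{pro43}, and the weighted bound \eqref{aa438}, prevents loss of mass at infinity in the rescaled nonradial profile. This closes the contradiction and yields the uniform lower bound $\|u_n - v_n\|_{L^\infty} \geq C_0$.
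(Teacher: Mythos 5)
Your opening steps match the paper: pass to $w_n$, use the uniform decay estimate \eqref{412} to locate a limit $w\not\equiv 0$ solving the linearized equation \eqref{16}, invoke Theorem \ref{th11} to get $w=cZ$, and identify the nodal domains $\Omega_n\rightrightarrows B_r(0)$ of $u_n-v_n$. From that point, however, the argument has a genuine gap.

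You write a Pohozaev identity for the \emph{linear} equation satisfied by $w_n$ on $\Omega_n$ with $w_n=0$ on $\partial\Omega_n$. That identity is quadratic in $w_n$; in the limit it becomes a Pohozaev identity for $w=cZ$ on $B_r$ with $Z=0$ on $\partial B_r$, which is automatically satisfied (it is a consistency relation for any solution of the limiting Dirichlet problem on $B_r$), so it gives no constraint on $c$. The paper instead applies the Pohozaev identity to the \emph{nonlinear} equations for $u_n$ and for $v_n$ separately on $\Omega_n$ (equations \eqref{416} and \eqref{417}), subtracts, and uses $u_n=v_n$ on $\partial\Omega_n$. After dividing by $\|u_n-v_n\|_\infty$, the mean value theorem and the limit $n\to\infty$ produce the identity \eqref{426}, which is \emph{linear} in $w$ and crucially contains factors of $U_\alpha$ (and $\nabla U_\alpha\cdot x$, etc.) on $\partial B_r$, quantities that do \emph{not} vanish there. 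Plugging $w=AZ$ into this linear identity forces $A=0$, hence $w\equiv 0$, which contradicts the nontriviality you established from \eqref{412}. Without the subtraction of the two nonlinear Pohozaev identities, that linear constraint is never obtained.

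Your attempt to close the argument via a spherical-harmonic decomposition of $w_n$ does not repair this. For finite $n$ the coefficients $a_{ij}^n$ depend on the nonradial $v_n$ and are not radial, so the modes $w_{n,k}\Phi_k$ do not decouple under the operator $-(a_{ij}^n\,\cdot_{x_j})_{x_i}$. More importantly, after dividing by $\|u_n-v_n\|_\infty$ there is no reason the $k_0$-th mode of $w_n$ stays bounded below: the nonradial part of $u_n-v_n$ could be $o(\|u_n-v_n\|_\infty)$ while the radial part carries all the mass, and nothing you invoke (the quadratic Pohozaev, \eqref{aa438}, or Proposition \ref{pro43}) rules this out. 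The correct mechanism is to kill the radial coefficient $A$ directly, which is what the paper's linear Pohozaev identity accomplishes.
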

\begin{proof}
Suppose on the contrary that, there exists a sequence of nonradial solutions $v_n$ to \eqref{41} in $B_{\frac{1}{\var_n}}$ related to the exponent $\al_n$ such that
\begin{align}\label{p414}
		\|u_n-v_n\|_{L^{\infty}(\mathbb{R}^{N})}\to0, \quad\text{as}\ n\to+\infty.
	\end{align}
Let $w_n:=\frac{u_n-v_n}{\|u_n-v_n\|_{L^{\infty}(\mathbb{R}^{N})}}$, then $w_n$ satisfies the following equation in the weak sense
\begin{align}
&\int_{B_{\frac{1}{\var_n}}}\lr|\nabla u_n|^{p-2}\nabla w_n+\frac{p-2}{2}
		\int_0^1\lr (1-t)|\nabla u_n|^{2}+t|\nabla v_n|^{2}\rr^{\frac{p-4}{2}}\md t
		\lr\nabla w_n\cdot\nabla(u_n+v_n)\rr\nabla v_n\rr\nabla\varphi\md x\nonumber\\
&=\frac{1}{\|u_n-v_n\|_{L^{\infty}(\mathbb{R}^{N})}}\int_{B_{\frac{1}{\var_n}}}|x|^{\alpha_n}\lr \lr u_n+\beta_{\al_n}(n)\rr^{p_{\al_n}^*-1}-
\lr v_n+\beta_{\al_n}(n)\rr^{p_{\al_n}^*-1}\rr\varphi\md x,\quad\forall\varphi\in C_c^\infty(\R^N).\nonumber
\end{align}
Since $\|w_n\|_{L^{\infty}(\mathbb{R}^{N})}=1$, through the standard regularity theorem, we can deduce from \eqref{36} and \eqref{p414} that $w_n\to w$ in $C_{loc}^{1,\eta}(\R^N)$, and $w$ satisfies the equation
\begin{align}\label{425p}
&\quad\int_{\R^N}\lr|\nabla U_\al|^{p-2}\nabla w+(p-2)
		|\nabla U_\al|^{p-4}
		\lr\nabla w\cdot\nabla U_\al\rr\nabla U_\al\rr\nabla\varphi\md x\\
&=(p_{\al}^*-1)\int_{\R^N}|x|^{\alpha}U_\al^{p_{\al}^*-2}w\varphi\md x,\quad\forall\varphi\in C_c^\infty(\R^N).\nonumber
\end{align}
Since $\al\ne\al(k)$, by Theorem \ref{th11}, we obtain
\begin{align}\label{425}
w(x)=A\frac{(p-1)-|x|^{\frac{p+\al}{p-1}}}
{(1+|x|^{\frac{p+\al}{p-1}})^{\frac{N+\al}{p+\al}}}  \qquad\text{for\ some}\ A\in\R.
\end{align}

If $A\ne0$, from \eqref{425}, there exists a radius $r=(p-1)^{\frac{p-1}{p+\al}}$ and domain $\Omega_n\subset B_{\frac{1}{\var_n}}$ such that $u_n-v_n=0$ on $\partial\Omega_n$ and
\begin{align}\label{414ppp}
\Omega_n\to B_r,\quad n\to+\infty,
\end{align}
i.e., for any $\varepsilon>0$, there exists $N_0\geq1$ such that $B_{r-\varepsilon}\subset\Omega_n\subset B_{r+\varepsilon}$ for any $n\geq N_0$. From the Pohozaev identity, multiplying the equation \eqref{41} by $v_n+\beta_{\al_n}(n)$, and integrating on $\Omega_n$, we have
\begin{align}\label{414}
&\int_{\Omega_n}|\nabla v_n|^p\md x-\int_{\partial \Omega_n}|\nabla v_n|^{p-2}(\nabla v_n\cdot\nu)(v_n+\beta_{\al_n}(n))\md \mms=\int_{\Omega_n}
|x|^{\alpha_n}\lr v_n+\beta_{\al_n}(n)\rr^{p_{\al_n}^*}\md x.
	\end{align}
Multiplying the equation \eqref{41} by $x\cdot\nabla(v_n+\beta_{\al_n}(n))$ and integrating on $\Omega_n$, we obtain
\begin{align}\label{415}
&\quad\frac{N-p}{p}\int_{\Omega_n}|\nabla v_n|^p\md x-\frac{1}{p}\int_{\partial \Omega_n}|\nabla v_n|^{p}(x\cdot\nu)\md \mms+\int_{\partial \Omega_n}|\nabla v_n|^{p-2}(\nabla v_n\cdot x)(\nabla v_n\cdot\nu)\md \mms\\
&=\frac{N+\al_n}{p^*_{\al_n}}\int_{\Omega_n}|x|^{\alpha_n}\lr v_n+\beta_{\al_n}(n)\rr^{p_{\al_n}^*}\md x-\frac{1}{p^*_{\al_n}}\int_{\partial\Omega_n}|x|^{\alpha_n}\lr v_n+\beta_{\al_n}(n)\rr^{p_{\al_n}^*}(x\cdot\nu)\md \mms,\nonumber
	\end{align}
where $\nu$ represents the unit outer normal vector on $\partial\Omega_n$.

By subtracting \eqref{414}$\times\frac{N-p}{p}$ from \eqref{415}, we deduce from \eqref{41} that
\begin{align}\label{416}
&\quad\int_{\partial \Omega_n}|\nabla v_n|^{p-2}(\nabla v_n\cdot x)(\nabla v_n\cdot\nu)\md \mms+\frac{N-p}{p}\int_{\partial \Omega_n}|\nabla v_n|^{p-2}(\nabla v_n\cdot\nu)(v_n+\beta_{\al_n}(n))\md \mms\\
&=\frac{1}{p}\int_{\partial \Omega_n}|\nabla v_n|^{p}(x\cdot\nu)\md S-\frac{1}{p^*_{\al_n}}\int_{\partial\Omega_n}|x|^{\alpha_n}\lr v_n+\beta_{\al_n}(n)\rr^{p_{\al_n}^*}(x\cdot\nu)\md \mms\nonumber.
	\end{align}
Similarly, we can also conclude that $u_n$ satisfies the identity
\begin{align}\label{417}
&\quad\int_{\partial \Omega_n}|\nabla u_n|^{p-2}(\nabla u_n\cdot x)(\nabla u_n\cdot\nu)\md \mms+\frac{N-p}{p}\int_{\partial \Omega_n}|\nabla u_n|^{p-2}(\nabla u_n\cdot \nu)(u_n+\beta_{\al_n}(n))\md \mms\\
&=\frac{1}{p}\int_{\partial \Omega_n}|\nabla u_n|^{p}(x\cdot\nu)\md \mms-\frac{1}{p^*_{\al_n}}\int_{\partial\Omega_n}|x|^{\alpha_n}\lr u_n+\beta_{\al_n}(n)\rr^{p_{\al_n}^*}(x\cdot\nu)\md \mms\nonumber.
	\end{align}
Subtracting \eqref{416} with \eqref{417}, we obtain
\begin{align}\label{418}
&\quad\int_{\partial \Omega_n}|\nabla u_n|^{p-2}(\nabla u_n\cdot x)(\nabla u_n\cdot\nu)-|\nabla v_n|^{p-2}(\nabla v_n\cdot x)(\nabla v_n\cdot\nu)\md \mms\\
&+\frac{N-p}{p}\int_{\partial \Omega_n}\lr|\nabla u_n|^{p-2}(\nabla u_n\cdot \nu)(u_n+\beta_{\al_n}(n))-|\nabla v_n|^{p-2}(\nabla v_n\cdot \nu)(v_n+\beta_{\al_n}(n))\rr\md \mms\nonumber\\
&=\frac{1}{p}\int_{\partial \Omega_n}\lr|\nabla u_n|^{p}-|\nabla v_n|^{p}\rr(x\cdot\nu)\md \mms\nonumber.
	\end{align}
By the mean value theorem, we have
\begin{align}\label{420}
&\quad|\nabla u_n|^{p-2}(\nabla u_n\cdot x)(\nabla u_n\cdot\nu)-|\nabla v_n|^{p-2}(\nabla v_n\cdot x)(\nabla v_n\cdot\nu)\\
&=\frac{p-2}{2}\int_{0}^1\lr t|\nabla u_n|^{2}+(1-t)|\nabla v_n|^{2}\rr^{\frac{p-4}{2}}\md t(\nabla u_n-\nabla v_n)(\nabla u_n+\nabla v_n)(\nabla u_n\cdot x)(\nabla u_n\cdot\nu)\nonumber\\
&\quad+|\nabla v_n|^{p-2}((\nabla u_n-\nabla v_n)\cdot\nu)(\nabla u_n\cdot x)+|\nabla v_n|^{p-2}((\nabla u_n-\nabla v_n)\cdot x)(\nabla v_n\cdot \nu)\nonumber,
	\end{align}
\begin{align}\label{421}
&\quad|\nabla u_n|^{p-2}(\nabla u_n\cdot \nu)(u_n+\beta_{\al_n}(n))-|\nabla v_n|^{p-2}(\nabla v_n\cdot \nu)(v_n+\beta_{\al_n}(n))\\
&=\frac{p-2}{2}\int_{0}^1\lr (1-t)|\nabla u_n|^{2}+t|\nabla v_n|^{2}\rr^{\frac{p-4}{2}}\md t(\nabla u_n-\nabla v_n)(\nabla u_n+\nabla v_n)\nonumber\\
&\quad\times(\nabla u_n\cdot \nu)(u_n+\beta_{\al_n}(n))+|\nabla v_n|^{p-2}((\nabla u_n-\nabla v_n)\cdot\nu)(u_n+\beta_{\al_n}(n))\nonumber,	\end{align}
and
\begin{align}\label{422}
		&\quad\lr|\nabla u_n|^{p}-|\nabla v_n|^{p}\rr(x\cdot\nu)\\
&=\frac{p}{2}\int_{0}^1\lr (1-t)|\nabla u_n|^{2}+t|\nabla v_n|^{2}\rr^{\frac{p-2}{2}}\md t(\nabla u_n-\nabla v_n)(\nabla u_n+\nabla v_n)(x\cdot\nu)\nonumber.
	\end{align}
Combining \eqref{418}, \eqref{420}, \eqref{421} and \eqref{422}, we have
\begin{align}\label{423}
&\quad\frac{p-2}{2}\int_{\partial\Omega_n}\int_{0}^1\lr (1-t)|\nabla u_n|^{2}+t|\nabla v_n|^{2}\rr^{\frac{p-4}{2}}\md t(\nabla u_n-\nabla v_n)(\nabla u_n+\nabla v_n)\\
&\quad\times(\nabla u_n\cdot x)(\nabla u_n\cdot\nu)\md S
+\int_{\partial\Omega_n}|\nabla v_n|^{p-2}((\nabla u_n-\nabla v_n)\cdot\nu)(\nabla u_n\cdot x)\md \mms\nonumber\\
&+\int_{\partial\Omega_n}|\nabla v_n|^{p-2}((\nabla u_n-\nabla v_n)\cdot x)(\nabla v_n\cdot \nu)\md \mms\nonumber\\
&+\frac{N-p}{p}\frac{p-2}{2}\int_{\partial\Omega_n}\int_{0}^1\lr t|\nabla u_n|^{2}+(1-t)|\nabla v_n|^{2}\rr^{\frac{p-4}{2}}\md t(\nabla u_n-\nabla v_n)(\nabla u_n+\nabla v_n)\nonumber\\
&\quad\times(\nabla u_n\cdot \nu)(u_n+\beta_{\al_n}(n))\md S+\frac{N-p}{p}\int_{\partial\Omega_n}|\nabla v_n|^{p-2}((\nabla u_n-\nabla v_n)\cdot\nu)(u_n+\beta_{\al_n}(n))\md \mms\nonumber\\
&-\frac{1}{2}\int_{\partial\Omega_n}\int_{0}^1\lr (1-t)|\nabla u_n|^{2}+t|\nabla v_n|^{2}\rr^{\frac{p-2}{2}}\md t(\nabla u_n-\nabla v_n)(\nabla u_n+\nabla v_n)(x\cdot\nu)\md \mms=0.\nonumber
	\end{align}

Using the decay properties of $u_n$, $v_n$, $w_n$ and Lebesgue's dominated convergence theorem, taking the limit $n\to\infty$ in \eqref{423}, we get
\begin{align}\label{426}
		&\quad(p-2)\int_{\partial B_r}|\nabla U_\al|^{p-4}(\nabla w\cdot \nabla U_\al)(\nabla U_\al\cdot x)(\nabla U_\al\cdot\nu)\md \mms\\
		&\quad+2\int_{\partial B_r}|\nabla U_\al|^{p-2}(\nabla w\cdot\nu)(\nabla U_\al\cdot x)\md \mms\nonumber\\
&+\frac{(N-p)(p-2)}{p}\int_{\partial B_r}|\nabla U_\al|^{p-4}(\nabla w\cdot\nabla U_\al)(\nabla U_\al\cdot \nu)U_\al\md \mms\nonumber\\
&+\frac{N-p}{p}\int_{\partial B_r}|\nabla U_\al|^{p-2}(\nabla w\cdot\nu)U_\al\md \mms-\int_{\partial B_r}|\nabla U_\al|^{p-2}(\nabla w\cdot \nabla U_\al)(x\cdot\nu)\md \mms=0.\nonumber
	\end{align}
Plugging \eqref{425} into \eqref{426}, we get
$$A=0.$$
Therefore, we have
\begin{align}\label{p440}w_n\to w\equiv 0\quad \text{in}\ C^{1,\eta}_{loc}(\R^N).	\end{align}
Let $x_n\in B_{\frac{1}{\var_n}}$ be such that $|w_n(x_n)|=1=\|w_n\|_{L^{\infty}(\mathbb{R}^{N})}$. From the uniform decay estimate in Proposition \ref{ppq43}, we know the sequence $\{x_n\}$ is bounded, thus there exists some point $x_0\in\mathbb{R}^{N}$ such that $x_n\to x_0$ (up to subsequence) and hence $w(x_0)=1$, which contradicts \eqref{p440}.
Thus we have proved \eqref{413} and hence concluded our proof of Proposition \ref{pp44}.
\end{proof}

\bigskip

\section{The bifurcation result: completion of our proof of the main Theorem \ref{th14}}

In this section, we will complete our proof of Theorem \ref{th14}. For this purpose, let \(\{\varepsilon_n\}\) be a sequence such that \(\varepsilon_n\rightarrow0\), there exists a sequence of nondegenerate radial solutions \(u_{n,\alpha}\) of the equation \eqref{31} (with \(\varepsilon = \varepsilon_n\)) which converges to \(U_{\alpha}\) as \(n\rightarrow+\infty\).

In Section 3, for all $k\in\mathbb{N}$, we have proved that \((\alpha_{k}^n,u_{n,\alpha_{k}^n})\) are nonradial bifurcation points, which generate continua \(\mathcal{C}(\alpha_{k}^n)\) in the space \((0,+\infty)\times\mathcal{Z}_n\), where \(\alpha_{k}^n\) is the unique root of equation \eqref{330}. Moreover, when \(k\) is an even integer, these continua also exist in the space \((0,+\infty)\times\mathcal{Z}^l_n\), where \(\mathcal{Z}^l_n\) is defined in the proof of Theorem \ref{thm39}. These continua \(\mathcal{C}(\alpha_{k}^n)\) are global and satisfy the well-known Rabinowitz alternative Theorem (see Theorem \ref{th38}).

In addition, according to Corollary \ref{c12}, the Morse exponent of $U_\al$ changes when $\al$ crosses $\al(k)$, with $\al(k)=\frac{p\sqrt{(N+p-2)^2+4(k-1)(p-1)(k+N-1)}-p(N+p-2)}{2(p-1)}$ and all the eigenfunctions associated with the linearized problem (i.e., the solutions of \eqref{22}) are contained within the space $X$, which is defined in \eqref{19}. Define the space
\begin{align}\label{51}
\mathcal{Z}:=\Big\{&h\in X \ \text{such\  that}\ h(x_1,\cdots,x_N)=h\lr g(x_1,\cdots,x_{N-1}),x_N\rr \\ &\text{for\ any}\ g\in\mathcal{O}(N-1)\Big\}\nonumber.
\end{align}
By extending the function by zero outside of $B_{\frac{1}{\var_n}}$, through the regularity theorems, we can deduce that $\mathcal{C}(\al_k^n)$ belongs to the space
$$\mathcal{H}:=(0,+\infty)\times\mathcal{Z}.$$
Furthermore, according to Proposition \ref{pp32}, we know that as \(n \to +\infty\), \(u_{n,\alpha_{k}^n}\) converges to \(U_{\alpha(k)}\) within the space \(\mathcal{Z}\), which is a subset of \(X\). By \cite{SW1}, we know that, if we restrict the Morse exponent of $U_\al$ in the space $\mathcal{Z}$, then it will increase by $1$ as $\alpha$ cross $\al(k)$, i.e.,
\begin{align*}
m(\al(k)+\delta)-m(\al(k)-\delta)=1,
\end{align*}
where $m$ is the Morse index of $U_\al$ in the space $\mathcal{Z}$. We aim to utilize this alteration in the Morse exponent of \(U_{\alpha}\) within the space \(\mathcal{Z}\) and prove that, as \(n\to+\infty\), these continua \(\mathcal{C}(\alpha_{k}^n)\) converge in an appropriate sense to continua of nonradial solutions of \eqref{11} which bifurcate from \((\alpha(k),U_{\alpha(k)})\) in the product space \(\mathcal{H}=(0,+\infty)\times\mathcal{Z}\). To achieve this, we draw on certain ideas previously employed in \cite{AG}, and also refer to \cite{GP}.

In order to establish the bifurcation result, we need the following topological result (refer to Theorem 9.1 in \cite{W}).
\begin{lem}[\textbf{\cite{W}}]\label{th51}
Let $X_n$ be a sequence of connected subsets of a metric space $X$. Let $\liminf(X_n)$ $(\limsup(X_n)$, resp.$)$ denote the set of all $x\in X$ such that any neighborhood of $x$ intersects all except finitely many of $X_n$ $($infinitely many of $X_n$, resp.$)$. If\\
(i) $\liminf(X_n)\ne\emptyset$, \\
(ii) $\bigcup X_n$ is precompact, \\
then $\limsup(X_n)$ is nonempty, compact and connected.
\end{lem}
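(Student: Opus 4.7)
The nonemptiness of $\limsup(X_n)$ is immediate from the inclusion $\liminf(X_n)\subseteq\limsup(X_n)$ together with hypothesis (i). For compactness, I would invoke the standard representation
\[
\limsup(X_n)=\bigcap_{N\geq 1}\overline{\bigcup_{n\geq N}X_n},
\]
which exhibits $\limsup(X_n)$ as an intersection of closed sets contained in the compact closure of $\bigcup_n X_n$ (compact by hypothesis (ii)); hence $\limsup(X_n)$ is compact.

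The substantive part is connectedness, which I would establish by contradiction in the classical Whyburn style. Suppose one had a separation $\limsup(X_n)=A\sqcup B$ with $A,B$ nonempty, disjoint, closed (and hence compact). Since $A,B$ are disjoint compact sets in a metric space, I can pick disjoint open neighborhoods $U\supset A$ and $V\supset B$ with $\overline{U}\cap\overline{V}=\emptyset$. The key first claim is that $X_n\subset U\cup V$ for all sufficiently large $n$. If this failed, along a subsequence I could extract points $y_{n_k}\in X_{n_k}\setminus(U\cup V)$; by precompactness of $\bigcup X_n$ a further subsequence would satisfy $y_{n_k}\to y$, and by construction $y\in\limsup(X_n)$ while $y\notin U\cup V$, contradicting $\limsup(X_n)\subset U\cup V$.

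Given this, connectedness of each individual $X_n$ together with $\overline{U}\cap\overline{V}=\emptyset$ forces each such $X_n$ to lie entirely in $U$ or entirely in $V$. Because both $A$ and $B$ are nonempty subsets of $\limsup(X_n)$, infinitely many indices $n$ must land on each side. I would then use hypothesis (i) to close the argument: fix any $c\in\liminf(X_n)$, and note that $c$ lies in exactly one of the three pairwise disjoint open sets $U$, $V$, or $X\setminus(\overline{U}\cup\overline{V})$. A small enough neighborhood $W$ of $c$ therefore lies in a single one of these three, but by the definition of $\liminf$ the neighborhood $W$ must intersect all but finitely many $X_n$. Since infinitely many $X_n$ sit in each of $U$ and $V$, and $W$ meets at most one of them, we reach a contradiction in every case.

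The step I expect to require the most care is the first claim in the contradiction argument, where the precompactness hypothesis is used precisely to convert an ``escape'' scenario $y_{n_k}\notin U\cup V$ into an actual limit point of $\limsup(X_n)$. Once that claim is secured, the rest is a pigeonhole argument exploiting that (i) and (ii) are each invoked exactly once at the critical junctures. No further input from the analytic setting of the paper is needed, so the lemma can be treated as a purely topological tool to be plugged into the bifurcation arguments of Section 5.
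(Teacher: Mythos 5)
Your proof is correct; note that the paper does not prove this lemma at all but simply cites it as Theorem~9.1 of Whyburn's \emph{Topological Analysis} (reference \cite{W}), so there is no "paper proof" to compare against. Your argument is the standard one: nonemptiness from $\liminf(X_n)\subseteq\limsup(X_n)$, compactness from the representation $\limsup(X_n)=\bigcap_{N\geq 1}\overline{\bigcup_{n\geq N}X_n}$ as a nested intersection of closed sets inside the compact closure $\overline{\bigcup_n X_n}$, and connectedness by the separation argument. The step you flag as delicate is handled correctly: if $X_{n_k}$ escapes $U\cup V$ along a subsequence, precompactness yields a limit point $y$ of $\{y_{n_k}\}$ with $y_{n_k}\in X_{n_k}$, so $y\in\limsup(X_n)$, while $y\in X\setminus(U\cup V)$ since this set is closed --- contradicting $\limsup(X_n)=A\sqcup B\subseteq U\cup V$. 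One cosmetic remark: when you take $c\in\liminf(X_n)$ and place it in one of $U$, $V$, or $X\setminus(\overline{U}\cup\overline{V})$, it is cleaner to note directly that $c\in\liminf(X_n)\subseteq\limsup(X_n)\subseteq U\cup V$, so only the first two cases arise; the pigeonhole then runs exactly as you describe, since a ball around $c$ inside $U$ (say) is disjoint from the infinitely many $X_n\subseteq V$, contradicting the definition of $\liminf$.
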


Let \(n\) be sufficiently large and let \(\alpha_{k}^{n}\) be given by \eqref{330}, so that \((\alpha_{k}^{n},u_{n,\alpha_{k}^{n}})\) is a bifurcation point for problem \eqref{31}. Denote by \(\mathcal{C}(\alpha_{k}^{n})\) the maximal connected component that bifurcates from \((\alpha_{k}^{n},u_{n,\alpha_{k}^{n}})\) in the space \(\mathcal{H}\). Fix \(\delta>0\) such that, in the interval \([\alpha_{k}-\delta,\alpha_{k}+\delta]\), there is no other exponent \(\alpha_{j}\) with \(j\neq k\). Define
$$\mathcal{H}_n:=\mathcal{C}(\al_k^n)\bigcap B_{\delta,\mathcal{H}}(\al_k^n,u_{n,\al_k^n}).$$
where
$$B_{\delta,\mathcal{H}}(\al_k^n,u_{n,\al_k^n}):=\{(\al,h)\in \mathcal{H}\
\text{such\ that}\ |\al-\al_k^n|+\|h-u_{n,\al_k^n}\|_X<\delta\},$$
where the space $X$ and its norm are defined by \eqref{19} and \eqref{110}. Let $\mathcal{M}^n_k$ be the
maximal connected component of $\mathcal{H}_n$ that contains $(\al_k^n,u_{n,\al_k^n})$. Then, $\mathcal{M}^n_k\neq\emptyset$ and $(\al(k),U_{\al(k)})\in\liminf\limits_{n}(\mathcal{M}^n_k)$.

\begin{lem}\label{th52}
For every $k\in\mathbb{N}$, the set $\bigcup\limits_{n}\mathcal{M}^n_k$ is precompact in $\mathcal{H}$.
\end{lem}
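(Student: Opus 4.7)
The plan is to show that an arbitrary sequence $\{(\beta_n,h_n)\}$ with $(\beta_n,h_n)\in\mathcal{M}_k^n$ admits a subsequence that is Cauchy (and hence convergent) in $\mathcal{H}=(0,+\infty)\times\mathcal{Z}$. By construction, $\beta_n\in[\alpha_k^n-\delta,\alpha_k^n+\delta]$ and, since $\alpha_k^n\to\alpha(k)>0$ by Proposition \ref{pp35}, the sequence $\{\beta_n\}$ is bounded away from $0$ and $+\infty$; extract a subsequence with $\beta_n\to\beta_\ast\in[\alpha(k)-\delta,\alpha(k)+\delta]$. Similarly, $\|h_n-u_{n,\alpha_k^n}\|_X<\delta$ together with $\|u_{n,\alpha_k^n}-U_{\alpha(k)}\|_X\to0$ (Proposition \ref{pp32}) gives a uniform bound $\|h_n\|_X\leq A$.

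Next, since each $h_n$ (extended by zero outside $B_{1/\varepsilon_n}(0)$) solves the approximate equation \eqref{41} with parameter $\beta_n$, the uniform bound $\|h_n\|_X\leq A$ places us precisely in the setting of Propositions \ref{pp41} and \ref{pro43}. These yield the $n$-independent decay
\begin{equation*}
h_n(x)\leq\frac{C}{(1+|x|)^{\frac{N-p}{p-1}}},\qquad |\nabla h_n(x)|\leq\frac{C}{(1+|x|)^{\frac{N-1}{p-1}}}\qquad\forall\,x\in\mathbb{R}^N.
\end{equation*}
Because the exponent $\gamma$ in the definition of $L^\infty_\gamma$ satisfies $\gamma<\frac{N-p}{p-1}$, the decay on $h_n$ forces the weighted-$L^\infty$ tails $\sup_{|x|\geq R}(1+|x|)^\gamma|h_n(x)|\to0$ as $R\to+\infty$, uniformly in $n$. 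Analogously, the decay on $|\nabla h_n|$ gives $\int_{|x|\geq R}|\nabla h_n|^p\,\mathrm{d}x\leq C R^{-(N-p)/(p-1)}\to0$ uniformly in $n$, since $p(N-1)/(p-1)-N=(N-p)/(p-1)>0$.

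I would then apply the standard interior $C^{1,\eta}_{\mathrm{loc}}$ regularity for $p$-Laplacian equations (DiBenedetto, Tolksdorf) combined with the uniform $L^\infty$ bound on the right-hand side $|x|^{\beta_n}(h_n+\beta_{\beta_n}(n))^{p_{\beta_n}^*-1}$ on any fixed compact set, to obtain, for every $R>0$, a uniform $C^{1,\eta}(\overline{B_R})$ bound on $h_n$. A diagonal Arzel\`a--Ascoli argument then extracts a subsequence (not relabeled) with $h_n\to h_\ast$ in $C^{1,\eta}_{\mathrm{loc}}(\mathbb{R}^N)$ for some $h_\ast\in C^{1,\eta}_{\mathrm{loc}}(\mathbb{R}^N)$ inheriting the $\mathcal{O}(N-1)$-symmetry and the same pointwise decay bounds.

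The final step, which is the main technical point, is to upgrade this local convergence to convergence in the full norm of $X$. For the $L^\infty_\gamma$-piece, fix $\varepsilon>0$ and choose $R$ so large that the uniform tail bound above is smaller than $\varepsilon/2$; on $B_R$ uniform convergence $h_n\to h_\ast$ gives $\sup_{B_R}(1+|x|)^\gamma|h_n-h_\ast|<\varepsilon/2$ for $n$ large, hence $\|h_n-h_\ast\|_\gamma\to0$. For the $D^{1,p}$-piece, the same tail truncation together with $C^1$-convergence on $B_R$ yields $\int_{\mathbb{R}^N}|\nabla(h_n-h_\ast)|^p\,\mathrm{d}x\to0$. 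Therefore $(\beta_n,h_n)\to(\beta_\ast,h_\ast)$ in $\mathcal{H}$, which proves that $\bigcup_n\mathcal{M}_k^n$ is precompact. The delicate aspect is precisely the simultaneous use of the two decay estimates in Propositions \ref{pp41} and \ref{pro43} (neither of which relies on a Green representation formula), since without the uniform-in-$n$ pointwise decay one would not be able to dominate the tails in both components of the $X$-norm at once.
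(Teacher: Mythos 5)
Your argument is essentially the same as the paper's: reduce to a uniform $X$-bound via Proposition \ref{pp32}, invoke the uniform pointwise decay from Proposition \ref{pp41} to control the $L^\infty_\gamma$ tails, extract a $C^{1,\eta}_{\mathrm{loc}}$-convergent subsequence by elliptic regularity, and then split into local $+$ tail contributions. The one place where you deviate is the $D^{1,p}$-piece: the paper passes to the limit in the integral identity $\int|\nabla h_j|^p=\int|x|^{\alpha_j}(h_j+\beta_{\alpha_j}(n))^{p^*_{\alpha_j}-1}h_j$ to get convergence of norms, then uses weak convergence plus uniform convexity of $L^p$; you instead invoke the uniform gradient decay of Proposition \ref{pro43} directly. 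Your route is a legitimate alternative and arguably more self-contained since it avoids the norm-convergence/uniform-convexity step, but both mechanisms rest on the same uniform decay estimates.

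There is, however, one genuine gap. You only treat sequences of the special form $(\beta_n,h_n)\in\mathcal{M}^n_k$, i.e.\ one element from each $\mathcal{M}^n_k$. Precompactness of $\bigcup_n\mathcal{M}^n_k$ requires handling \emph{arbitrary} sequences $(\beta_j,h_j)\in\mathcal{M}^{n(j)}_k$, where $n(j)$ need not run through $\mathbb{N}$; in particular $n(j)$ may be bounded, with infinitely many terms coming from a single $\mathcal{M}^{n_0}_k$. The paper explicitly splits into the two cases $n(j)\to+\infty$ and $n(j)\not\to+\infty$. In the second case, after passing to a subsequence with $n(j)\equiv n_0$, the same decay estimates (which hold with constants independent of $n$, hence a fortiori for fixed $n_0$) give a limit that solves the approximate problem on $B_{1/\varepsilon_{n_0}}$ rather than on all of $\mathbb{R}^N$, so the conclusion still lands in $\mathcal{H}$. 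You should add this case distinction; the argument is short but the precompactness claim is false without it being addressed.
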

\begin{proof}
Let $(\al^{(k)}_j,h^{(k)}_j)$ be a sequence in $\bigcup\mathcal{M}^n_k$, then $(\al^{(k)}_j,h^{(k)}_j)\in\mathcal{M}_k^{n(j)}$ for some $n(j)\geq1$. First, let us consider the case: $n(j)\to+\infty$, as $j\to+\infty$.

From the definition of $\mathcal{M}_k^{n(j)}$, the functions $h_j^{(k)}$ satisfy equation \eqref{31} with $\var = \var_{n(j)}$ and $\al = \al_j^{(k)}$. Since $|\al_j^{(k)}-\al^{n(j)}_{k}|+|\al^{n(j)}_{k}-\al(k)|<2\delta$ for $n(j)$ large enough, we have $\al_j^{(k)}\in(\al(k) - 2\delta,\al(k) + 2\delta)$ for $j$ large enough. Consequently, by passing to a subsequence if necessary, we have $\al_j^{(k)}\to\hat\al$, where $\hat\al\in[\al(k) - 2\delta,\al(k) + 2\delta]$. In addition, since $h_j^{(k)}\in\mathcal{Z}$ and $\|h_j^{(k)}-u_{n(j),\al_k^{n(j)}}\|_X<\delta$, it follows that $\|h_j^{(k)}\|_X<\delta+\sup \|u_{n(j),\al_k^{n(j)}}\|_X$. From Proposition \ref{pp32}, we get $\|u_{n(j),\al_k^{n(j)}}\|_X\leq C$. As a result, there exists a constant $C>0$ such that $\|h_{j}^{(k)}\|_\gamma\leq C$ and $\|h_{j}^{(k)}\|_{1,p}\leq C$ for all $j$. Then, by passing to a subsequence, we can assert that $h_j^{(k)}\to\hat h$ weakly in $D^{1,p}(\mathbb{R}^N)$ and almost everywhere in $\mathbb{R}^N$. Moreover, using equation \eqref{31}, we deduce that $h^{(k)}_j\to\hat h$ in $C^{1}_{loc}(\mathbb{R}^N)$, where $\hat h$ is a solution of equation \eqref{11} with the exponent $\al=\hat\al$.

Moreover, according to Proposition \ref{pp41}, there exists $C>0$ such that
\begin{align}\label{52}
		h_j^{(k)}(x)\leq\frac{C}{(1+|x|)^{\frac{N-p}{p-1}}}
\ \ \text{for\ every}\ x\in\R^N\ \text{and\ for\ every}\ j\in\N.
	\end{align}
Then we have
\begin{align}\label{53}
		|h_j^{(k)}(x)-\hat h(x)| \leq |h_j^{(k)}(x)|+|\hat h(x)|\leq\frac{C}{(1+|x|)^{\frac{N-p}{p-1}}},
	\end{align}
which implies that for every $\var>0$, there exists $r>0$ such that, for any $j>0$, $(1+|x|)^\gamma|h_j^{(k)}(x)-\hat h(x)|<\var$ if $|x|>r$. By the uniform convergence of $h_j^{(k)}$ to $\hat h$ on compact sets of $\R^N$, we obtain
$(1+|x|)^\gamma|h_j^{(k)}(x)-\hat h(x)|<\var$ in $B_r(0)$ if $j$ is large enough, i.e., $h_j^{(k)}\to\hat h$ in $L^\infty_{\gamma}(\R^N)$.

Furthermore, from \eqref{52}, we get
\begin{align}\label{54}
\int_{\R^N}|\nabla h_j^{(k)}|^p\md x=\int_{\R^N}|x|^{\al^{(k)}_j}\lr h_j^{(k)}+\beta_{\al^{(k)}_j}(n)\rr^{p_{\al^{(k)}_j}^*-1}h_j^{(k)}\md x.
	\end{align}
By \eqref{44}, and taking the limit in \eqref{54}, we have
\begin{align}\label{55}
\int_{\R^N}|\nabla h_j^{(k)}|^p\md x\to\int_{\R^N}|x|^{\hat\alpha}\hat h^{p_{\hat\al}^*-1}\hat h\md x=\int_{\R^N}|\nabla \hat h|^p\md x.
	\end{align}
Thus
$\int_{\R^N}|\nabla (h_j^{(k)}-\hat h)|^p\md x\to0$. Then $h_j^{(k)}\to\hat h$ strongly in $X$.

\smallskip

Next, we consider the other case: $n(j)\not\to+\infty$, as $j\rightarrow+\infty$. Then, up to a subsequence, we may assume that $n(j)$ converges to $n_0\in\N$. Repeating the proof for the case $n(j)\to+\infty$, we can obtain a subsequence $h_{j}^{(k)}$ that converges in $X$ to a solution of \eqref{31} with $\var=\var_{n_0}$ and the exponent $\al=\hat\al$. This finishes our proof of Lemma \ref{th52}.
\end{proof}

\begin{lem}\label{th53}
The set $\limsup\limits_{n}(\bigcup\mathcal{M}^n_k)\backslash\{(\al(k),U_{\al(k)})\}$ is nonempty.
\end{lem}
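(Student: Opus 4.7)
My plan is to produce, for each sufficiently large $n$, a point $(\tilde\alpha_n, \tilde h_n) \in \mathcal{M}^n_k$ with
\begin{equation*}
|\tilde\alpha_n - \alpha_k^n| + \|\tilde h_n - u_{n,\alpha_k^n}\|_X = \tfrac{\delta}{2},
\end{equation*}
and then extract a convergent subsequence via the precompactness established in Lemma \ref{th52}. Since $(\alpha_k^n, u_{n,\alpha_k^n}) \to (\alpha(k), U_{\alpha(k)})$ in $\mathcal{H}$ by Propositions \ref{pp32} and \ref{pp35}, passing to the limit in the distance identity above will yield a point of $\limsup_n \mathcal{M}^n_k$ at distance $\delta/2$ from $(\alpha(k), U_{\alpha(k)})$, hence distinct from it.

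To produce $(\tilde\alpha_n, \tilde h_n)$ I first use the global alternative of Theorem \ref{th38}. If the continuum $\mathcal{C}(\alpha_k^n)$ were entirely contained in the open ball $B_{\delta,\mathcal{H}}(\alpha_k^n,u_{n,\alpha_k^n})$, then it would be bounded, ruling out alternative (i). By the choice of $\delta$, for $n$ large the interval $[\alpha_k^n - \delta, \alpha_k^n + \delta]$ contains no $\alpha_j^n$ with $j\neq k$ and stays bounded away from $0$, ruling out alternatives (ii) and (iii). Hence $\mathcal{C}(\alpha_k^n)\not\subset B_{\delta,\mathcal{H}}$. A standard connectedness argument applies next: if $\mathcal{M}^n_k$ were at positive distance from $\partial B_{\delta,\mathcal{H}}$, then $\mathcal{M}^n_k$ would be both open and closed in the connected set $\mathcal{C}(\alpha_k^n)$, forcing $\mathcal{M}^n_k = \mathcal{C}(\alpha_k^n)$ and contradicting the previous sentence. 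Hence $\overline{\mathcal{M}^n_k}$ meets $\partial B_{\delta,\mathcal{H}}$. The continuous function $(\alpha,h) \mapsto |\alpha - \alpha_k^n| + \|h - u_{n,\alpha_k^n}\|_X$ on the connected set $\mathcal{M}^n_k$ attains $0$ at the center and values arbitrarily close to $\delta$, so by the intermediate value theorem it takes the value $\delta/2$ at some $(\tilde\alpha_n, \tilde h_n) \in \mathcal{M}^n_k$.

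Finally, Lemma \ref{th52} gives a subsequence of $(\tilde\alpha_n, \tilde h_n)$ converging to some $(\tilde\alpha, \tilde h) \in \mathcal{H}$. By construction $(\tilde\alpha, \tilde h) \in \limsup_n \mathcal{M}^n_k$. Passing to the limit in the distance identity and using the convergence of $(\alpha_k^n, u_{n,\alpha_k^n})$ to $(\alpha(k), U_{\alpha(k)})$ in $\mathcal{H}$, we obtain
\begin{equation*}
|\tilde\alpha - \alpha(k)| + \|\tilde h - U_{\alpha(k)}\|_X = \tfrac{\delta}{2} > 0,
\end{equation*}
so $(\tilde\alpha, \tilde h) \neq (\alpha(k), U_{\alpha(k)})$, and the set in the statement is nonempty.

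The main obstacle is the middle paragraph: the topological step guaranteeing that $\mathcal{M}^n_k$ reaches the sphere of radius $\delta/2$. It hinges on combining the Rabinowitz alternative of Theorem \ref{th38} with the isolation of $\alpha(k)$ among the critical exponents to reject all three alternatives simultaneously. The subsequent non-radial character of the limit $\tilde h$ is not needed for this lemma --- it will be extracted in the next step using Proposition \ref{pp44}.
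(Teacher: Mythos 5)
Your proposal is correct and follows essentially the same route as the paper: invoke the Rabinowitz alternative of Theorem~\ref{th38} to force the continuum out of $B_{\delta,\mathcal{H}}(\al_k^n,u_{n,\al_k^n})$, extract a point of $\mathcal{M}^n_k$ (or its closure) at a fixed positive distance from the center, pass to a convergent subsequence via precompactness, and observe that the limit lies in $\limsup_n\mathcal{M}^n_k$ at distance bounded away from $(\al(k),U_{\al(k)})$. The only cosmetic difference is that you locate the escape point on the sphere of radius $\delta/2$ via the intermediate value theorem inside $\mathcal{M}^n_k$, while the paper takes the boundary point on $\partial B_{\delta,\mathcal{H}}$ from the closure of $\mathcal{M}^n_k$; and note that your ``open and closed in $\mathcal{C}(\al_k^n)$'' step is slightly imprecise as written (components need not be open), but the intended conclusion that $\overline{\mathcal{M}^n_k}$ must reach $\partial B_{\delta,\mathcal{H}}$ is the standard escape-to-the-boundary argument, made rigorous through the compactness of $\mathcal{C}(\al_k^n)\cap\overline{B}_{\delta,\mathcal{H}}$ and the clopen characterization of components in compact Hausdorff spaces, which the paper likewise glosses over.
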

\begin{proof}
From Theorem \ref{th38} and regularity of the global continuum $\mathcal{C}(\al_k^n)$ bifurcates from the point $(\al_k^n,u_{n,\al_k^n})$, we infer that, either this continuum $\mathcal{C}(\al_k^n)$ is unbounded in $\mathcal{H}$, or there exists another bifurcation point $(\al_i^n,u_{n,\al_i^n})\in\mathcal{C}(\al_k^n)$ with $\al_i^n\not\in[\al(k)-\delta,\al(k)+\delta]$, or $\mathcal{C}(\al_k^n)$ meets $\{0\}\times \mathcal{Z}$. Thus, on the closure of any component
$\mathcal{M}^n_k$, there exists a point $(\hat \al_{k}^n,\hat h_{n,\hat \al_{k}^n})\in\partial B_{\delta,\mathcal{H}}(\al_k^n,u_{n,\al_k^n})$, i.e.,
\begin{align}\label{56}
|\hat \al_{k}^n-\al_k^n|+\|\hat h_{n,\hat \al_{k}^n}-u_{n,\al_k^{n}}\|_X=\delta,
	\end{align}
where $\hat h_{n,\hat \al_{k}^n}$ is a solution of \eqref{31} in $B_{\frac{1}{\var_n}}$ for the exponent $\al=\hat \al_{k}^n$.

From the standard regularity theorems and the boundedness of $\hat\al_{k}^n$ and $\hat h_{n,\hat \al_{k}^n}$, up to a subsequence, we get
$(\hat \al_{k}^n,\hat h_{n,\hat \al_{k}^n})\to(\hat\al_k,\hat h_k)$, where $\hat h_k$ is the solution of \eqref{11} for the component $\al=\hat \al_k,\hat \al_k\in[\al(k)-2\delta,\al(k)+2\delta]$ and satisfies
\begin{align*}
|\hat \al_k-\al(k)|+\|\hat h_k-U_{\al(k)}\|_X=\delta>0.
	\end{align*}
Note that $(\hat\al_k,\hat h_k)\in\limsup\limits_{n}(\mathcal{M}_k^n)$ but $(\hat\al_k,\hat h_k)\ne(\al(k),U_{\al(k)})$. This finishes our proof of Lemma \ref{th53}.
\end{proof}

Now we define the curve
\begin{align}\label{57}
\Gamma=\Big\{&(\al,U_\al)\in(0,+\infty)\times X \ \text{such\  that}\ U_\al\ \text{is\ the} \\ &\text{unique\ radial\ solution\ of}\ \eqref{11}\ such\ that\ U_\al(0)=1\Big\}\nonumber.
\end{align}
\begin{thm}\label{th54}
For any $k\geq2$, the points $(\al(k),U_{\al(k)})$ are nonradial bifurcation points for the curve
$\Gamma$.
\end{thm}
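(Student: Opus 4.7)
The plan is to derive global bifurcation of \eqref{11} at $(\al(k), U_{\al(k)})$ as a limit of the finite-$n$ bifurcations already established on the balls $B_{\frac{1}{\var_n}}(0)$. Concretely, I will apply Whyburn's topological result (Lemma \ref{th51}) to the sequence of connected sets $\{\mathcal{M}_k^n\}_n$ in the metric space $\mathcal{H}=(0,+\infty)\times\mathcal{Z}$. Since $(\al_k^n, u_{n, \al_k^n})\in\mathcal{M}_k^n$ for every $n$, and Proposition \ref{pp35} together with Proposition \ref{pp32} yields $\al_k^n\to\al(k)$ and $u_{n, \al_k^n}\to U_{\al(k)}$ in $X$, the point $(\al(k), U_{\al(k)})$ lies in $\liminf_n \mathcal{M}_k^n$. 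Precompactness of $\bigcup_n \mathcal{M}_k^n$ in $\mathcal{H}$ is Lemma \ref{th52}. Lemma \ref{th51} then gives that $\mathcal{M}_k:=\limsup_n \mathcal{M}_k^n$ is nonempty, compact and connected, and contains $(\al(k), U_{\al(k)})$.

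Next, every $(\hat\al, \hat h)\in\mathcal{M}_k$ arises, up to subsequence, as the limit in $\mathcal{H}$ of points $(\al_j, h_j)\in\mathcal{M}_k^{n(j)}$ along some sequence $n(j)\to+\infty$. Passing to the limit in the weak formulation of \eqref{31}, using the uniform bounds $h_j(x)\leq C(1+|x|)^{-\frac{N-p}{p-1}}$ and $|\nabla h_j(x)|\leq C(1+|x|)^{-\frac{N-1}{p-1}}$ from Propositions \ref{pp41} and \ref{pro43}, the dominated convergence theorem, and the fact that $\beta_{\al_j}(n(j))\to0$, one verifies that $\hat h\in X$ is a weak solution of \eqref{11} at exponent $\hat\al$. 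Lemma \ref{th53} furnishes a point $(\hat\al_k,\hat h_k)\in\mathcal{M}_k$ with $|\hat\al_k-\al(k)|+\|\hat h_k-U_{\al(k)}\|_X=\delta>0$, so $\mathcal{M}_k$ is a nontrivial continuum of solutions of \eqref{11} connecting $(\al(k), U_{\al(k)})$ to the boundary of the $\delta$-ball in $\mathcal{H}$; by connectedness, $\mathcal{M}_k$ meets every intermediate sphere centered at $(\al(k), U_{\al(k)})$.

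The decisive step is to exclude the possibility that $\mathcal{M}_k$ locally coincides with the radial branch $\Gamma$ near $(\al(k), U_{\al(k)})$. Suppose, toward a contradiction, that for some small $\rho>0$ every $(\hat\al, \hat h)\in\mathcal{M}_k$ with $0<|\hat\al-\al(k)|+\|\hat h-U_{\al(k)}\|_X\leq\rho$ is radial. By Theorem \ref{thm11} combined with the identification $\la=1$ from Proposition \ref{pp42}, such $\hat h$ must equal $U_{\hat\al}$; with $\delta$ chosen so that $[\al(k)-2\delta,\al(k)+2\delta]$ contains no other critical exponent $\al(j)$, connectedness allows us to pick such a point with $\hat\al\neq\al(k)$, and hence $\hat\al\neq\al(j)$ for every $j\in\N$. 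For a representative sequence $(\al_j, h_j)\to(\hat\al, U_{\hat\al})$ in $\mathcal{H}$ with $h_j$ nonradial, Proposition \ref{pp32} also yields $u_{n(j),\al_j}\to U_{\hat\al}$ in $X$, so $\|u_{n(j),\al_j}-h_j\|_{L^\infty(\R^N)}\to 0$; this contradicts the uniform lower bound $\|u_{n(j),\al_j}-h_j\|_{L^\infty(\R^N)}\geq C_0>0$ of Proposition \ref{pp44}. Consequently, nonradial solutions of \eqref{11} accumulate at $(\al(k),U_{\al(k)})$, proving the claim. The main obstacle is precisely this last step: preventing $\mathcal{M}_k$ from collapsing onto $\Gamma$, which hinges on the quantitative non-collapse estimate of Proposition \ref{pp44}, itself built on the Pohozaev identity on domains $\Omega_n\rightrightarrows B_r(0)$ with $r=(p-1)^{\frac{p-1}{p+\al}}$, the weighted Sobolev inequality of Proposition \ref{pp43}, and the De Giorgi-Moser-Nash iteration giving the sharp decay \eqref{412} for the normalized differences $w_n$.
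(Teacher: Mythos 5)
Your proposal is correct and follows essentially the same route as the paper's proof: Whyburn's lemma applied to the components $\mathcal{M}_k^n$ via Lemmas \ref{th52}--\ref{th53} to produce the nontrivial limit continuum, Proposition \ref{pp42} to pin down $\lambda=1$, Proposition \ref{pp32} to identify the radial limit, and Proposition \ref{pp44} as the decisive non-collapse estimate. The only difference is organizational: you phrase the final step as a proof by contradiction (assume $\mathcal{M}_k$ locally lies on the radial branch $\Gamma$ and apply Proposition \ref{pp44} to the approximating sequence), whereas the paper argues directly that any point of $\mathcal{C}_k\setminus\{(\alpha(k),U_{\alpha(k)})\}$ is nonradial by ruling out $U_{\lambda,\hat\alpha_k}$ for $\lambda\neq1$, for $\lambda=1$ with $\hat\alpha_k=\alpha(k)$, and for $\lambda=1$ with $\hat\alpha_k\neq\alpha(k)$ in turn; these two formulations are logically equivalent.
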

\begin{proof}
For $k\geq2$, we consider the bifurcation points $(\al_k^n,u_{n,\al_k^n})$ in $B_{\frac{1}{\var_n}}$ for problem \eqref{31}, as well as the connected components $\mathcal{M}^n_k$ of the bifurcation continua in $B_{\delta,\mathcal{H}}(\al_k^n,u_{n,\al_k^n})$.

From the Proposition \ref{th52}, the sequence of the sets $\mathcal{M}_k^n$ satisfies the hypotheses of Lemma \ref{th51} in the space $\mathcal{H}$, thus we have
$$\mathcal{C}_k:=\limsup\limits_{n}(\mathcal{M}^n_k)$$
is nonempty, compact and connected. Furthermore, $\mathcal{C}_k$ contains $(\al(k),U_{\al(k)})$, and Lemma \ref{th53} implies $\mathcal{C}_k\setminus\{(\al(k),U_{\al(k)})\}\neq\emptyset$ .

If $(\hat\al_k,\hat h_k)\in\mathcal{C}_k\backslash\{(\al(k),U_{\al(k)})\}$, then there exists a sequence of points $(\hat\al_k^n,\hat h_k^n)\in\mathcal{M}^n_k$ such that $(\hat\al_k^n,\hat h_k^n)\to(\hat\al_k,\hat h_k)$ in $\mathcal{H}$, $\hat h_k$ is a solution of \eqref{11} with the exponent $\al=\hat\al_k$, and $\hat h_{k}>0$ since $(\hat\al_k^n,\hat h_k^n)\in B_{\delta,\mathcal{H}}(\al_k^n,u_{n,\al_k^n})$ for $\delta$ small. We aim to prove $\hat h_k\ne U_{\la,\hat\al_k}$ for any $\la>0$.

First, by Proposition \ref{pp42}, we obtain $\hat h_k\ne U_{\la,\hat\al_k}$ for any $\la\ne1$. In addition, if $\hat\al_k=\al(k)$, then $\hat h_k\ne U_{\hat\al_k}$, since $(\hat\al_k,\hat h_k)\in \mathcal{C}_k\backslash\{(\al(k),U_{\al(k)})\}$.

We only need to consider the case $\la=1$ and $\hat\al_k\ne\al(k)$, and prove $\hat h_k\ne U_{\hat\al_k}$ by showing that
$$\|\hat h_k^n-U_{\hat\al_k}\|_X>C>0$$
for any $n$ large enough and for some uniform positive constant $C$ independent of $n$. From Proposition \ref{pp32}, we know that $u_{n,\hat\al_k^n}\to U_{\hat\al_k}$ in $X$, as $n\rightarrow+\infty$. Thus it is equivalent to prove that
\begin{align}\label{58}\|\hat h_k^n- u_{n,\hat\al_k^n}\|_X>C>0
\end{align}
for any $n$ large enough and for some uniform positive constant $C$ independent of $n$. Indeed, \eqref{58} follows from Proposition \ref{pp44} immediately.

From the uniqueness of radial solutions to \eqref{11} in Theorem \ref{thm11}, we deduce that $\hat h_k$ is a nonradial solution of \eqref{11} with the exponent $\al=\hat\al_k$. This completes our proof of Theorem \ref{th54}.
\end{proof}
	
\begin{rem}\label{rem55}
The bifurcation occurring at the points $(\alpha(k), U_{\alpha(k)})$ in Theorem \ref{th54} is, in fact, global. Specifically, we have derived the existence of a closed and connected set $\mathcal{C}_k$ that branches off from each point $(\alpha(k), U_{\alpha(k)})$.
\end{rem}

\noindent {\bf Proof of Theorem \ref{th14}.} Theorem \ref{th54} establishes the existence of a continuum $\mathcal{C}_k$ of nonradial solutions to equation \eqref{11}. These solutions are invariant under the action of $\mathcal{O}(N - 1)$ and bifurcate from the points $(\alpha(k), U_{\alpha(k)})$, where $\al(k)=\frac{p\sqrt{(N+p-2)^2+4(k-1)(p-1)(k+N-1)}-p(N+p-2)}{2(p-1)}$ for all $k \geq 2$. Therefore, (i) in Theorem \ref{th14} has been proved.

Furthermore, if $k$ is an even integer, we can replicate the proof of Theorem \ref{th54}, by utilizing the space $\mathcal{Z}^l$, which is defined by
$$\mathcal{Z}^l:=\{h\in X\ \text{s.t.\ $h$\ is\ invariant\ by\ the\ action\ of}\ \mathcal{G}_l \}$$
for $l=1,2,\cdots,\left[\frac{N}{2}\right]$, where $\mathcal{G}_l$ is defined by \eqref{347qq}. From Theorem \ref{thm39} and Remark \ref{rem312}, we are able to obtain $\left[\frac{N}{2}\right]$ distinct continua of nonradial solutions that branch off from the point $(\al(k),U_{\al(k)})$. The $l$-th continua is invariant under the action of $\mathcal{G}_l$. Consequently, by \eqref{347qq}, (ii) in Theorem \ref{th14} has been derived.

Lastly, the precise decay property of all these nonradial solutions $h$ derived in (i) and (ii) follows from Theorem 1.3 in \cite{DLL} (see \eqref{eq0806} and \eqref{eq0806+}), i.e., $h\sim |x|^{-\frac{N-p}{p-1}}$ and $|\nabla h|\sim |x|^{-\frac{N-1}{p-1}}$, as $|x|\rightarrow+\infty$.
\qed

\end{document}